\newtheorem{thm}{Theorem}[section] 
\newtheorem*{thm*}{Theorem}
\newtheorem{lem}[thm]{Lemma}     
\newtheorem{cor}[thm]{Corollary}
\newtheorem*{cor*}{Corollary}
\newtheorem{prop}[thm]{Proposition}
\newtheorem{fact}[thm]{Fact}
\newtheorem{defn}[thm]{Definition}
\newtheorem{rem}[thm]{Remark}
\newtheorem{problem}[thm]{Problem}
\newtheorem{example}[thm]{Example}
\newtheorem*{claim*}{Claim}
\begin{document}

\def\Ind#1#2{#1\setbox0=\hbox{$#1x$}\kern\wd0\hbox to 0pt{\hss$#1\mid$\hss}
\lower.9\ht0\hbox to 0pt{\hss$#1\smile$\hss}\kern\wd0}
\def\Notind#1#2{#1\setbox0=\hbox{$#1x$}\kern\wd0\hbox to 0pt{\mathchardef
\nn="3236\hss$#1\nn$\kern1.4\wd0\hss}\hbox to 0pt{\hss$#1\mid$\hss}\lower.9\ht0
\hbox to 0pt{\hss$#1\smile$\hss}\kern\wd0}
\def\ind{\mathop{\mathpalette\Ind{}}}
\def\nind{\mathop{\mathpalette\Notind{}}}

\global\long\def\lstp{\operatorname{Lstp}}

\global\long\def\vstp{\operatorname{vstp}}

\global\long\def\acl{\operatorname{acl}}

\global\long\def\inp{\operatorname{inp}}

\global\long\def\Aut{\operatorname{Aut}}

\global\long\def\M{\operatorname{\mathbb{M}}}

\global\long\def\NTP{\operatorname{NTP}}

\global\long\def\NIP{\operatorname{NIP}}

\global\long\def\IP{\operatorname{IP}}

\global\long\def\TP{\operatorname{TP}}

\global\long\def\NSOP{\operatorname{NSOP}}

\global\long\def\bdn{\operatorname{bdn}}

\global\long\def\tp{\operatorname{tp}}

\global\long\def\qftp{\operatorname{qftp}}

\global\long\def\Q{\operatorname{\mathbb{Q}}}

\global\long\def\Sym{\operatorname{Sym}}

\global\long\def\Stab{\operatorname{Stab}}

\global\long\def\alt{\operatorname{alt}}

\global\long\def\ext{\operatorname{ext}}

\global\long\def\Av{\operatorname{Av}}

\global\long\def\bdd{\operatorname{bdd}}

\global\long\def\mes{\operatorname{\mathfrak{M}}}

\global\long\def\inv{\operatorname{inv}}

\global\long\def\Th{\operatorname{Th}}

\global\long\def\P{\operatorname{\mathbf{P}}}

\global\long\def\bsigma{\operatorname{\boldsymbol{\Sigma}}}

\global\long\def\fs{\operatorname{fs}}
\global\long\def\fsg{\operatorname{fsg}}

\title{External definability and groups in NIP theories}
\author{Artem Chernikov}
\address{\'Equipe de Logique Math\'ematique \\ Institut de Math\'ematiques de Jussieu - Paris Rive Gauche \\ Universit\'e Paris Diderot Paris 7 \\ UFR de Math\'ematiques - case 7012 \\ 75205 Paris Cedex 13 \\ France}
\email{art.chernikov@gmail.com}
\author{Anand Pillay}
\address{Department of Mathematics \\ University of Notre Dame \\ 281 Hurley Hall \\ Notre Dame, IN 46556 \\ US}
\email{apillay@nd.edu}
\author{Pierre Simon}
\address{Univerist\'e de Lyon CNRS \\ Institut Camille Jordan UMR 5208\\ 43 boulevard du 11 novembre 1918 \\ F-69622 Villeurbanne Cedex \\ France}
\email{simon@math.univ-lyon1.fr}
\thanks{The first and the third authors were supported by the European Research Council under the European Unions Seventh Framework Programme (FP7/2007-2013) / ERC Grant agreement no. 291111 and by ANR-13-BS01-0006-01 ValCoMo.\\
The second author was supported by EPSRC grant EP/I002294/1.
}
\maketitle

\begin{abstract} We prove that many properties and invariants of definable groups in $\NIP$
theories, such as definable amenability, $G/G^{00}$, etc., are preserved when passing to the theory of the 
Shelah expansion by externally definable sets, $M^{\ext}$, of a model $M$. In the light of these results we continue the study of the ``definable topological dynamics'' of groups in $\NIP$ theories. In particular we prove the Ellis group conjecture relating the Ellis group to $G/G^{00}$ in some new cases, including definably amenable groups in $o$-minimal structures.
\end{abstract}
\section{Introduction}

Motivation for the work in this paper comes from a new interaction between topological dynamics and model theory, initiated by Newelski \cite{New4} for example. Classical topological dynamics is concerned with understanding a topological group (often discrete) via its continuous actions on compact spaces.  Model theory can provide some new dynamical invariants for discrete groups $G$, which will be explored in future papers. In the current paper
we are concerned rather with new invariants for definable groups suggested by topological dynamics. Given a group $G$ definable in a first order structure $M$, we have the action of $G$ on the Stone space $S_{G}(M)$ of ultrafilters on the Boolean algebra of definable subsets of $G$. $S_{G}(M)$ is a ``tame'' analogue of the Stone-Cech compactification of the discrete group $G$. In analogy with the discrete case, we can study minimal subflows of $S_{G}(M)$ and, under additional assumptions, a corresponding ``Ellis group''.  Newelski and later the second author made some conjectures relating this Ellis group to  a model theoretic invariant $G/G^{00}$ (read in a nonstandard model) of $G$. In the current paper we solve  this conjecture in some important cases.

Let us now describe more of the background behind, and aims of the paper, so as to aid accessibility to a wider audience, although in the body of the paper 
we will freely use reasonably advanced methods from contemporary model theory, with references of course. Model theory studies first order theories.
In the same way as abstract groups are important in mathematics, and algebraic or Lie groups are important in algebraic or differential
geometry, the understanding of groups definable in a given first order theory (or in classes of first order theories) is important for model theory
as well as its applications.  The class of {\em stable} first order theories is at the centre of model theory and  {\em stable group theory} was developed in the 1970's and 80's (see \cite{PoizatStableGroups}), often using terminology (connected components, stabilizers, generics,..) borrowed from the key example of algebraic groups over algebraically closed fields. This general theory applied to other examples such as the theory of differentially closed fields, with a substantial impact on ``differential algebraic groups'' among other topics. Although real Lie groups are outside the domain of stability, they are, more or less, groups definable in $o$-minimal theories, and have been studied by model theorists from this point of view for some time. A common generalization of stable theories and $o$-minimal theories are $\NIP$ theories, characterized by every uniformly definable family of definable sets having finite Vapnik-Chervonenkis dimension. Recently there has been a profitable study of groups definable in $\NIP$ theories, using explicitly techniques and notions from stability theory and stable group theory. See \cite{NIP1,NIP2}. The current paper continues this line of work. In this more general context we have several ``connected components'' of a definable group $G$, $G^{0}$, $G^{00}$, $G^{\infty}$ which coincide in the stable case. Likewise various different notions of ``genericity''. The stable-like $\NIP$ groups are the ``definably amenable'' groups.  As the latter expression suggests, notions of topological dynamics are quite relevant to our study. Topological dynamical notions were  brought
into the picture by Newelski, for example \cite{New4}, and later by Pillay, for example \cite{AnandTopDyn}. 
In \cite{GisPenPil}, Gismatullin, Penazzi and Pillay developed a basic theory built around the notion of a {\em definable} action of a group
$G$ definable in a model $M$, on a compact space $X$, but under a certain assumption on the model $M$, {\em definability of types}. In the stable case this assumption is automatically satisfied, for any model $M$. And in the $\NIP$ case, Shelah has proved  that for a given model $M$ one can ``expand'' it by ``externally definable sets'' to $M^{ext}$ so that $\NIP$ of the new theory $\Th\left(M^{\ext}\right)$ is preserved and the definability of types assumption  is satisfied for $M^{ext}$: 
\begin{fact}
\label{fac: Shelah's theorem on externally definable sets}\cite{ShelahDependentCont} Let $M$
be a model of an $\NIP$ theory $T$. 
\begin{enumerate}
\item The projection of an externally definable subset of $M$ is externally
definable.
\item In particular $\Th\left(M^{\ext}\right)$ eliminates quantifiers,
and is $\NIP$.
\end{enumerate}
\end{fact}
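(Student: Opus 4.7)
Part (2) follows from (1) with little effort: externally definable subsets of powers of $M$ are already closed under Boolean combinations, so adding closure under projection yields quantifier elimination for $\Th(M^{\ext})$. $\NIP$ of $\Th(M^{\ext})$ is then inherited, because via QE each $M^{\ext}$-definable family reduces to a Boolean combination of $T$-definable families with parameters in an elementary extension, and VC-dimension depends only on the defining formula, not its parameters.

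The substance lies in (1). Write $D = \phi(M^{n+1}, b)$ with $b$ in a saturated $N \succeq M$, and set $E = \{a \in M^n : (\exists c \in M)\,\phi(a, c, b)\}$. The goal is to exhibit a formula $\psi$ and a parameter tuple $\bar{d}^*$ in some $N^* \succeq M$ with $E = \psi(M^n, \bar{d}^*)$. The guiding observation is that global types finitely satisfiable in $M$ correctly decide the projection on the complementary side: if $q \in S(\M)$ is such a coheir and $c^* \models q|_{Mb}$, then for $a \in M^n \setminus E$ we automatically have $\neg \phi(a, c^*, b)$, since otherwise $\phi(a, y, b) \in q$ would yield, by finite satisfiability, a witness $c \in M$. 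The task therefore reduces to finding a single coheir (or a bounded combination of coheirs) whose realizations also witness $\phi(a, y, b)$ for every $a \in E$.

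To produce such a uniform generic witness is where $\NIP$ enters decisively. I would invoke Shelah's analysis of coheirs and indiscernible sequences in $\NIP$ theories, or equivalently its modern repackaging as ``honest definitions'' derived from the $(p,q)$-theorem: for $\phi$ of bounded VC-codensity and any finite $A \subseteq M^{n+1}$, one can find parameters inside $M$ whose associated formula approximates $\phi(M^{n+1}, b) \cap A$ sharply from within. A compactness/saturation argument in a sufficiently large $N^*$ then upgrades this finite local approximation to a single global parameter tuple defining $E$ externally. The induction on the number of projected coordinates is routine once the one-variable case is settled.

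The heart of the proof, and its main obstacle, is the reconciliation of approximation-from-within (giving $\theta(M, \bar{c}^*) \subseteq \phi(M, b)$) with exact capture of the existential $M$-witnesses of the projection: the formula $\exists y\, \theta(x, y, \bar{c}^*)$ evaluated in a bigger model could acquire spurious realizations on $M^n$ coming from witnesses $y \in N^* \setminus M$. In an $\IP$ theory one could arrange arbitrarily many mutually incompatible witnesses, obstructing any such bounded external description; the $\NIP$ hypothesis rules this out quantitatively, and the work of the proof is to extract and glue together the VC-bounded descriptions this affords.
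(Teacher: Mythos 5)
The paper does not prove this statement; it is quoted as a Fact and attributed to Shelah's \cite{ShelahDependentCont} (with \cite{ExtDefI} supplying the modern honest-definitions proof that the paper later relies on), so there is no in-paper argument to compare against. Your reduction of (2) to (1) is correct, and your coheir observation is a valid one-sided step: if $c^*$ realizes a global type in the variable $y$ finitely satisfiable in $M$ restricted to $Mb$, then $\phi(M^n,c^*,b)\subseteq E$. But the plan to finish by finding ``a single coheir (or a bounded combination of coheirs)'' whose realizations witness $\phi(a,y,b)$ for every $a\in E$ cannot work. Take $\phi(x,y):=(x=y)$ (no $b$ needed): then $E=M$, each $a$ has the unique witness $c=a$, and no finite family $c^*_1,\ldots,c^*_k$ of realizations of coheirs covers $E$ via $\bigcup_i\phi(M,c^*_i)$. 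The witness sets $\{c\in M:\phi(a,c,b)\}$ for $a\in E$ need not satisfy any $(p,q)$-property (they can be pairwise disjoint), so no transversal bound is available, and there is no quantitative ``gluing of VC-bounded descriptions'' of the kind you gesture at.

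What actually kills the spurious realizations is not further VC counting but where the honest-definition parameter lives, together with elementarity of the pair. By \cite[Proposition 1.1]{ExtDefI} (the unnamed proposition following Proposition~\ref{prop: properties of F on types} in this paper), given $\phi(x,y,b)$ with $b\in N\succ M$ and a sufficiently saturated $L_\P$-pair $(N^*,M^*)\succ(N,M)$, there is $\theta(x,y,d)$ with $d\in M^*=\P(N^*)$ such that $\theta(M^{n+1},d)=\phi(M^{n+1},b)$ and $\theta((M^*)^{n+1},d)\subseteq\phi((M^*)^{n+1},b)$. Put $E^*:=M^n\cap(\exists y\,\theta(x,y,d))(M^*)$; this is externally definable over $M$ since $d\in M^*\succ M$. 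Clearly $E\subseteq E^*$ because $\theta$ and $\phi$ agree on $M^{n+1}$. Conversely, if $a\in E^*$ is witnessed by $c^*\in M^*$, the honest-definition containment turns $\theta(a,c^*,d)$ into $\phi(a,c^*,b)$, so $(N^*,M^*)\models\exists y\in\P\ \phi(a,y,b)$ with $a\in M$ and $b\in N$; elementarity of the pair extension $(N^*,M^*)\succ(N,M)$ then pulls this down to a witness $c\in M$, i.e.\ $a\in E$. This last step --- transferring the $\P$-bounded existential across the pair --- is precisely what your sketch is missing.
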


Further study of externally definable sets in $\NIP$ theories, as
well as a refined and uniform version of Shelah's theorem, can be
found in \cite{ExtDefI,2012arXiv1202.2650C}.

So one aim of this paper is to show that many properties of (e.g. definable amenability)  and objects attached to (e.g. $G^{00}$)
a group $G$ definable over a model $M$ of an $\NIP$ theory $T$ are preserved when passing to 
$\Th\left(M^{\ext}\right)$, answering some questions raised in \cite{GisPenPil}. A second aim of this
paper, bearing in mind the above, is to prove some more cases of the ``Ellis group'' conjecture
(originating with Newelski) which says that in the $\NIP$ environment, for suitable groups $G$
definable over a model $M$,  $G/G^{00}$ should coincide with the ``Ellis group'' computed in $\Th\left(M^{\ext}\right)$, where all types over $M^{\ext}$ are definable.  And as is shown in the first part of the paper $G^{00}$ is unchanged when passing to the expanded theory. So the problem is well-defined,
and we answer it in particular for definably amenable groups in $o$-minimal theories, as well as dp-minimal groups. We also study  ``topological dynamical'' properties of groups with ``definable $f$-generics'' (see below), complementing  the study in \cite{AnandTopDyn} of groups with finitely satisfiable generics. 

\medskip
\noindent
Now for some more details.

In Section \ref{sec: Existence of invariant heirs and definability of measures} we establish a couple of general facts about measures in $\NIP$ theories. We show in Theorem \ref{thm: invariant heir of a measure exists} that every measure over a small model in an $\NIP$ theory has a global invariant extension which is also an heir (generalizing the result for types from \cite{CheKap}).
We also observe that the answer to \cite[Question 3.15]{GisPenPil}
is positive in the case of $\NIP$ theories.
\begin{thm*}[\ref{thm: Definability of types implies definabilty of measures}]

\begin{enumerate}
\item Assume that $T$ is $\NIP$, $M\models T$ and all types over $M$
are definable. Then every Borel probability measure on $S\left(M\right)$
is definable (a measure $\mu$ is definable if for every $L$-formula
$\phi\left(x,y\right)$ and closed disjoint subsets $C_{1},C_{2}$
of $\left[0,1\right]$, the sets $\left\{ b\in M:\mu\left(\phi\left(x,b\right)\right)\in C_{1}\right\} $
and $\left\{ b\in M:\mu\left(\phi\left(x,b\right)\right)\in C_{2}\right\} $
are separated by a definable set in $M$).
\item In particular, if $G$ is a definably amenable $M$-definable group,
then it is witnessed by an $M$-definable measure.
\end{enumerate}
\end{thm*}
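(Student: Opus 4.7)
The plan is to prove (1) via the finite approximation theorem for Keisler measures in $\NIP$ theories, after which (2) follows essentially automatically.

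For (1), fix an $L$-formula $\phi(x,y)$ and disjoint closed $C_{1}, C_{2} \subseteq [0,1]$, and set $\varepsilon := d(C_{1}, C_{2})/3 > 0$ (positive by compactness of $C_1, C_2$). The key input is the $\NIP$ approximation of measures by finite averages of types: there exist types $p_{1},\ldots,p_{n} \in S(M)$ in the support of $\mu$ such that for every $b$,
$$\left| \mu(\phi(x,b)) - \frac{1}{n}\bigl|\{i \le n : \phi(x,b) \in p_{i}\}\bigr| \right| < \varepsilon.$$
This is a standard $\NIP$/VC-dimension approximation (Hrushovski--Pillay style) applied to the $\NIP$ formula $\phi$.

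Next, the hypothesis that every type over $M$ is definable gives each $p_{i}$ a $\phi$-definition $d_{p_{i}}\phi(y) \in L(M)$. The empirical average $a(b) := \frac{1}{n}|\{i \le n : M \models d_{p_{i}}\phi(b)\}|$ is then a definable function of $b$ into the finite set $\{0, 1/n, \ldots, 1\}$, and for any $S \subseteq [0,1]$ the preimage $\{b \in M : a(b) \in S\}$ is a Boolean combination of the definable sets $\{b : M \models d_{p_{i}}\phi(b)\}$, hence is itself $M$-definable. Taking $D := \{b \in M : a(b) \in C_{1}^{\varepsilon}\}$, where $C_{1}^{\varepsilon}$ denotes the open $\varepsilon$-neighborhood of $C_1$, the approximation inequality forces $\{b : \mu(\phi(x,b)) \in C_{1}\} \subseteq D$, while $\{b : \mu(\phi(x,b)) \in C_{2}\}$ lies in $\{b : a(b) \in C_{2}^{\varepsilon}\}$, which is disjoint from $D$ because $C_{1}^{\varepsilon} \cap C_{2}^{\varepsilon} = \emptyset$ (since $d(C_{1}, C_{2}) = 3\varepsilon > 2\varepsilon$). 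This produces the required separating definable set, establishing (1).

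Part (2) is then immediate: definable amenability of $G$ yields a $G(M)$-invariant Keisler measure over $M$ witnessing it, and (1) upgrades this measure to an $M$-definable one (the measure itself is unchanged, so $G(M)$-invariance is preserved). The only nontrivial ingredient in the whole argument is the $\NIP$ finite approximation of measures by averages of types; with it in hand, the remainder is a routine exercise in separating the preimages of a definable finite-valued function. The main obstacle, therefore, is simply identifying and correctly invoking the approximation theorem in the statement needed (types in the support, lying in $S(M)$ rather than in a monster extension); everything else is bookkeeping.
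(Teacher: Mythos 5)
Your proof is correct and follows essentially the same route as the paper's: both invoke the NIP/VC-theorem finite approximation of $\mu$ by an average of types in $S(M)$, use the hypothesis that each of those types is definable to make the empirical average a definable function of the parameter, and then separate the preimages of $C_1$ and $C_2$ by a Boolean combination of the $\phi$-definitions (the paper uses $\varepsilon/2$ where $\varepsilon$ is the separation gap; you use $d(C_1,C_2)/3$ — immaterial). Part (2) is handled the same way in both cases, by noting the witnessing $G(M)$-invariant measure from part (1) is unchanged and hence remains invariant.
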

Examples of structures satisfying the
assumption of the theorem are: any model of a stable theory, $\left(\mathbb{R},+,\cdot\right)$,
$\left(\mathbb{Q}_{p},+.\cdot\right)$, $\left(\mathbb{Z},+,<\right)$
(see \cite[Section 5]{2012arXiv1202.2650C} for a discussion of this
phenomenon).

~

In Section \ref{sec: Lifting measures to Shelah's expansion and preservation of amenability} we study lifting of Keisler measures and related objects to Shelah's expansion.
The main theorem is:
\begin{thm*}[\ref{thm: lifting measures to Shelah's expansion}]
Assume that $T$
is $\NIP$, $M\models T$ and $G$ is an $M$-definable group.
\begin{enumerate}
\item Assume that $G$ is definably amenable,
i.e., there is a Borel probability measure $\mu$ on $S_{G}\left(M\right)$
invariant under the action of $G\left(M\right)$. Then $G$ is still
definably amenable in the sense of $M^{\ext}$: there is some Borel
probability measure $\mu'$ on $S_{G}\left(M^{\ext}\right)$ extending
$\mu$ and $G\left(M\right)$-invariant.
\item Assume that $G$ is definably extremely amenable,
i.e., the action of $G\left(M\right)$ on $S_{G}\left(M\right)$ has
a fixed point $p$. Then $G$ is still definably extremely amenable
in the sense of $M^{\ext}$: there is some $p'\in S_{G}\left(M^{\ext}\right)$
extending $p$ and $G\left(M\right)$-invariant.
\end{enumerate}
\end{thm*}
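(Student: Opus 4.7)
My plan is to lift $\mu$ (resp.\ $p$) to a global object on $S_G(\M)$ that is simultaneously (a) finitely satisfiable in $M$, so that it descends to a well-defined object on $S_G(M^{\ext})$ (if $\phi(M,b)=\psi(M,c)$, then $(\phi \triangle \psi)(M)=\emptyset$, and finite satisfiability forces the global object to give equal mass or truth value to $\phi$ and $\psi$), and (b) $G(M)$-invariant, so the descent inherits $G(M)$-invariance. The essential difficulty is (b): an arbitrary finitely satisfiable global extension of $\mu$ need not be $G(M)$-invariant, and $G(M)$ is not amenable as an abstract group, so the usual Markov--Kakutani argument is unavailable.

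For part (1), first take a global Borel probability measure $\tilde{\mu}_0$ on $S_G(\M)$ extending $\mu$ and finitely satisfiable in $M$. Such an extension exists by a standard ultrafilter construction and, by $\NIP$, is Borel definable. To repair $G(M)$-invariance, I would average $\tilde{\mu}_0$ against $\mu$ itself (this is the key idea, using definable amenability as a substitute for amenability of $G(M)$):
\[
\tilde{\mu}(\phi(x,b)) \;:=\; \int_{g} \tilde{\mu}_0\!\left(g^{-1}\phi(x,b)\right)\, d\mu(g),
\]
where Borel definability of $\tilde{\mu}_0$ ensures the integrand is measurable. One then verifies that $\tilde{\mu}$ extends $\mu$ (for $\phi\in L(M)$ the integrand is the constant $\mu(\phi)$ by $G(M)$-invariance of $\mu$), is $G(M)$-invariant (by a change of variables), and remains finitely satisfiable in $M$ (translation preserves this, and the integral of the zero function is zero). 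Finally define $\mu'(\phi(M,b)):=\tilde{\mu}(\phi(x,b))$ on externally definable sets; well-definedness, countable additivity, and $G(M)$-invariance follow immediately.

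For part (2), no convex averaging is possible, so the method must differ. One option is to apply (1) to $\delta_p$ to obtain a $G(M)$-invariant measure on $S_G(M^{\ext})$ concentrated on the fiber over $p$, then use a topological-dynamical argument to extract a fixed point from the support. Alternatively, I would use Zorn's lemma to take a maximal $G(M)$-invariant partial type $q \supseteq p$ in $L^{\ext}(M)$, and argue that any attempt to extend $q$ by a non-$G(M)$-invariant formula leads to a contradiction using finite satisfiability together with honest definitions. The main obstacle is precisely this last step: existence of an invariant measure on a flow does not generally yield a fixed point, so producing a $G(M)$-fixed type (rather than measure) in the fiber requires an argument making genuine use of the $\NIP$ structure of $S_G(M^{\ext})$ (for instance, that all types over $M^{\ext}$ are definable, so that $S_G(M^{\ext})$ is a particularly tame $G(M)$-flow).
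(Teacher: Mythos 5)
Your strategy -- produce a global object that is both finitely satisfiable in $M$ (so it descends to $M^{\ext}$) and $G(M)$-invariant -- is the right target, and the instinct to use $\mu$ itself as the averaging tool is natural. But the averaging construction in part (1) has genuine gaps, and part (2) is left unresolved.

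The central problem is the integral $\tilde{\mu}(\phi(x,b)) := \int_{g} \tilde{\mu}_0(g^{-1}\phi(x,b))\, d\mu(g)$. A Keisler measure $\mu\in\mes(M)$ is a Borel measure on $S_G(M)$, not a measure concentrated on $G(M)$, so integrating against $d\mu(g)$ picks up contributions from all types in the support of $\mu$, most of which are not realized in $M$. For the integral to even make sense, the integrand $g\mapsto\tilde{\mu}_0(g^{-1}\phi(x,b))$ must factor through $\tp(g/M)$. But $\tilde{\mu}_0$ being finitely satisfiable (hence invariant, hence Borel definable) over $M$ only gives that this value depends on $\tp(gb/M)$; for an \emph{external} $b$ this is strictly more information than $\tp(g/M)$, so the integral is not well-defined without first choosing an extension of $\mu$ to parameters including $b$ -- and different choices give different answers. (For instance, with $G=(\M,+)$ in RCF, $\mu=\delta_{p_\infty}$, $\tilde p_0$ the global coheir, and $b>M$, the truth of $g+x>b$ in $\tilde p_0$ for $g\models p_\infty$ is undetermined.) Even granting the integral a meaning, your two verifications quietly assume the integration runs over $g\in G(M)$ only: the integrand is constant equal to $\mu(\phi)$ \emph{only} for realized $g$, and $\tilde\mu_0(g^{-1}\phi(x,b))$ vanishes when $\phi(M,b)=\emptyset$ \emph{only} for $g\in G(M)$ (for $g\notin G(M)$, the coset $gG(M)$ can meet $\phi(\M,b)$). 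Since $\mu$ need not concentrate on realized types, neither ``extends $\mu$'' nor finite satisfiability in $M$ follows. Finally, the change of variables that makes $\tilde\mu$ \emph{left}-$G(M)$-invariant would require $\mu$ to be \emph{right}-invariant, which is not the given hypothesis.

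For part (2) you correctly recognize that this averaging cannot produce a type, and the two alternatives you sketch (extract a fixed point from the support; Zorn plus honest definitions) are precisely where the real work would have to go -- neither is carried out. The paper sidesteps all of this with a different decomposition. First, by Proposition \ref{prop: extending to a G-inv M-inv measure}, $\mu$ has a global extension $\mu'$ that is an $M$-invariant \emph{heir}; the heir property automatically upgrades $G(M)$-invariance to $G(\M)$-invariance (Proposition \ref{prop: heir remains G-invariant}), so no averaging is needed. Second, there is a canonical retraction $F_M$ from $M$-invariant global measures onto $M$-finitely-satisfiable ones (Sections \ref{sec: Extracting the f.s. part}--\ref{sec: Extracting the f.s. part of a measure}, built from honest definitions), which fixes the restriction to $M$ and commutes with pushforward by $M$-definable functions. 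Applying $F_M$ to $\mu'$ gives the desired $G(M)$-invariant, finitely satisfiable extension. Crucially, $F_M$ sends types to types, so running the very same argument on a $\{0,1\}$-measure gives part (2) with no extra work -- the difficulty you flagged simply does not arise in this approach.
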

This answers positively \cite[Question 3.16, (1) and (2)]{GisPenPil}.
We remark that (1) was essentially known for $o$-minimal theories but (2) was
open even in the $o$-minimal case. Our proof combines the existence of invariant heirs for measures from Section \ref{sec: Existence of invariant heirs and definability of measures} (as explained in Section \ref{sec: def amenability and f-generics}) along with the
existence of a canonical continuous retraction from the space of global invariant
measures onto the closed subspace of finitely satisfiable measures (Sections \ref{sec: Extracting the f.s. part}, \ref{sec: Extracting the f.s. part of a measure}).

If $G$ is a group definable over model $M$ of an $\NIP$ theory, then definable amenability of $G$ is equivalent to the existence of a global $f$-generic type of $G$, namely a complete
type $p$ over the monster model $\M$, every left translate $gp$ of which does not fork over $M$  (equivalently is $Aut(\M/M)$-invariant). An $f$-generic type $p$ can fall into one of the two extreme cases: (a) $p$ is $\fsg$ (with respect to $M$), namely every left translate $gp$ is finitely satisfiable in $M$, and (b) $p$ is a definable (over $M$) $f$-generic, namely $p$ is $f$-generic with respect to $M$ and is definable over $M$, \emph{equivalently} every $gp$ is definable over $M$.  So we will observe that both these extreme witnesses of definable amenability are preserved when passing to $\Th\left(M^{\ext}\right)$.

\begin{thm*}[\ref{thm: fsg and definable generics}]
Suppose $T$ is $\NIP$, $M\models T$ and $G$ is a group definable over $M$.
\begin{enumerate}
\item If $G$ has a global $\fsg$ type (with respect to $M$), then $G$ has a global $\fsg$ type with respect to $M^{\ext}$ in  $\Th\left(M^{\ext}\right)$.
\item If $G$ has a global $f$-generic which is definable over $M$, then the same is true for $\Th\left(M^{\ext}\right)$.
\end{enumerate}
\end{thm*}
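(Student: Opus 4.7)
Let $\mathfrak{N}$ be a monster of $\Th(M^{\ext})$; its $L$-reduct $\M := \mathfrak{N}|_L$ is a sufficiently large $L$-extension of $M$ and plays the role of an $L$-monster of $T$ for our purposes. Let $p \in S_{G}(\M)$ witness the hypothesis of (1) or (2). By Shelah's quantifier elimination for $\Th(M^{\ext})$ (Fact~\ref{fac: Shelah's theorem on externally definable sets}), every $L^{\ext}$-formula $\phi^{\ext}(x,y)$ is equivalent, when interpreted in $\mathfrak{N}$, to some $L$-formula $\tilde{\phi}(x,y,\bar{c})$ with parameters $\bar{c} \in \M$ absorbing the external predicates that occur in $\phi^{\ext}$. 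This allows us to define the candidate lift $p^{*} \in S_{G}(\mathfrak{N})$ by
\[
\phi^{\ext}(x,b) \in p^{*} \;\iff\; \tilde{\phi}(x,b,\bar{c}) \in p, \qquad b \in \mathfrak{N}.
\]
This is independent of the choice of $(\tilde{\phi},\bar{c})$ (any two $L$-representations of the same $L^{\ext}$-formula define the same subset of $\M$), and yields a complete consistent $L^{\ext}$-type concentrating on $G$. Since the group operation is $L$-definable, $\tilde{\phi}(gx,y,\bar{c})$ is an $L$-representation of $\phi^{\ext}(gx,y)$; this gives the key equivariance $g p^{*} = (gp)^{*}$ for every $g \in G(\mathfrak{N}) = G(\M)$.

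\textbf{Proof of (1).}
Assume $p$ is $\fsg$, so each $L$-translate $gp$ is finitely satisfiable in $M$. Given any $\phi^{\ext}(x,b) \in g p^{*} = (gp)^{*}$ with representation $\tilde{\phi}(x,b,\bar{c}) \in gp$, finite satisfiability of $gp$ in $M$ yields some $m \in M$ with $\models_{\M} \tilde{\phi}(m,b,\bar{c})$, equivalently $\models_{\mathfrak{N}} \phi^{\ext}(m,b)$. Hence every $L^{\ext}$-translate of $p^{*}$ is finitely satisfiable in $M^{\ext}$, so $p^{*}$ is a global $\fsg$ type for $G$ in $\Th(M^{\ext})$.

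\textbf{Proof of (2).}
Assume $p$ is a definable $f$-generic over $M$, so by the characterization recalled in the introduction every $L$-translate $gp$ is $L$-definable over $M$. For each $L^{\ext}$-formula $\phi^{\ext}(x,y)$ with representation $\tilde{\phi}(x,y,\bar{c})$, the set
\[
\{\, b \in \mathfrak{N} : \phi^{\ext}(x,b) \in g p^{*} \,\} \;=\; \{\, b \in \M : \tilde{\phi}(x,b,\bar{c}) \in gp \,\}
\]
is $L$-definable over $M\bar{c}$ by definability of $gp$, hence externally definable over $M$, and so defined by some $L^{\ext}$-predicate over $M^{\ext}$. Thus each translate $g p^{*}$ is $L^{\ext}$-definable over $M^{\ext}$, making $p^{*}$ a global definable $f$-generic in $\Th(M^{\ext})$. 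The main conceptual content of both parts lies in identifying the lift $p \mapsto p^{*}$ and its equivariance; the preservation of $\fsg$ and of definability then transfers formally via the compatibility $R_{\psi,a}(\mathfrak{N}) = \psi(\M,a)$ which makes $\mathfrak{N}$ the natural $L^{\ext}$-expansion of $\M$.
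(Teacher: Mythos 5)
The proposal contains a genuine gap at its very foundation: the claim that, by Shelah's quantifier elimination, ``every $L^{\ext}$-formula $\phi^{\ext}(x,y)$ is equivalent, when interpreted in $\mathfrak{N}$, to some $L$-formula $\tilde{\phi}(x,y,\bar{c})$ with parameters $\bar{c}\in\M$'' (where $\M=\mathfrak{N}\restriction L$) is false. Shelah's theorem reduces $L^{\ext}$-formulas to quantifier-free Boolean combinations of the basic predicates $R_{\psi}$, but these predicates are \emph{not} $L(\mathfrak{N})$-definable in $\mathfrak{N}\restriction L$. In the paper's presentation of the monster of $\Th(M^{\ext})$ (Section \ref{sec: Extracting the f.s. part}), one realizes $\mathfrak{N}$ as $M'$ living inside a larger pair $(N',M')$, and $R_{\psi}(M')=M'\cap\psi(x)$ with $\psi\in L(N)$; the parameters of $\psi$ lie in $N\subseteq N'$, which is disjoint from $M'$ outside of $M$ (any $n\in N\setminus M$ satisfies $\neg\P(n)$, hence $n\notin M'$). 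If your claim held, then $\mathfrak{N}\models\exists y\,\forall x\,(R_{\psi}(x)\leftrightarrow\tilde\phi(x,y))$, which by elementarity would push down to $M^{\ext}$ and force every externally definable subset of $M$ to already be $L(M)$-definable --- false for a generic $M$ (e.g.\ a saturated real closed field, where a non-realized cut is externally but not internally definable).

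Once the parameters $\bar c$ are acknowledged to live in a proper $L$-extension $\M'\supsetneq\M$, the formula $\tilde\phi(x,b,\bar c)$ is not over $\M$, so the prescription ``$\phi^{\ext}(x,b)\in p^{*}\iff\tilde\phi(x,b,\bar c)\in p$'' no longer makes sense for $p\in S_G(\M)$; one must replace $p$ by an extension to $\M'$. Making this choice canonical and showing the resulting $p^{*}$ is well-defined (independent of the chosen $L$-representation) is precisely where the invariance of $p$ over $M$ must be used, and it is exactly what Proposition \ref{prop: honest definitions} (honest definitions) and Lemma \ref{lem: extensions of definable types} accomplish. For the $\fsg$ case one can argue a bit more directly using finite satisfiability in $M$, but in the definable case the well-definedness is not a formal consequence of ``same trace on $\mathfrak{N}$'' --- the heir of $p$ over $\M'$ could a priori evaluate two representations differently. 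In short, the lifting map $p\mapsto p^{*}$ that you posit exists and is the right object, but constructing it requires the honest-definitions machinery that your argument tacitly replaces by a false internal-definability claim.

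Two further remarks. First, in part (1) the paper does not lift one global type $p\in S_G(\M)$; it first passes to a still larger $L$-monster $M''\succ M'$ and takes an $\fsg$ witness over $M''$, precisely so that the parameters appearing in representations of the $R_{\psi}$'s are covered. Second, the equivariance $gp^{*}=(gp)^{*}$ you need is inherited for free once $p^{*}$ is defined by the implication $p\vdash R_{\psi}(x)$ (Proposition \ref{prop: honest definitions}), whereas your version of equivariance is stated in terms of the fictitious $L(\M)$-representations.
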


We also characterize definably extremely amenable groups as those definably amenable groups in which $G^{00} = G$.

~

In Section \ref{sec: connected components} of the paper we study the effect of externally
definable sets on the model-theoretic connected components of definable
groups. Working in a monster model, let $G$ be a definable group
and $A\subseteq\M$. Recall that $G_{A}^{0}$, $G_{A}^{00}$ and $G_{A}^{\infty}$
are defined respectively as the intersection of all subgroups of $G$ of finite
index definable over $A$, the intersection of all subgroups of $G$
of bounded index type-definable over $A$, and the intersection of
all subgroups of $G$ of bounded index invariant over $A$.
The subscript is omitted if $A=\emptyset$. A fundamental fact is:
\begin{fact}
\label{fac: existence of connected components}Let $T$ be $\NIP$
and let $G$ be a definable group.
\begin{enumerate}
\item \cite{MR2361885} $G_{A}^{00}=G_{\emptyset}^{00}$ for every small set $A$, so in particular
the intersection of all type-definable subgroups of bounded index
is type definable over $\emptyset$ and is of index $\leq2^{\left|T\right|}$.
\item  \cite{Sh886} for the abelian case, and
\cite{Jakub} in general: $G_{A}^{\infty}=G_{\emptyset}^{\infty}$ for every small set $A$,
so in particular the intersection of all subgroups of bounded index
invariant over small subsets is invariant over $\emptyset$ and is
of index $\leq2^{\left|T\right|}$.
\end{enumerate}
\end{fact}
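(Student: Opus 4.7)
Both statements assert that an invariant (respectively, type-definable) minimum of a family of subgroups parameterized by the defining parameters already exists and is $\emptyset$-invariant (respectively, $\emptyset$-type-definable). The plan is to combine Baldwin--Saxl-type combinatorics available in $\NIP$ with the principle that an object invariant under $\Aut(\M)$ and coinciding with a type-definable (resp.\ invariant) set is already definable over $\emptyset$.

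For part (1), fix a type-definable subgroup $H \leq G$ of bounded index, defined by a partial type $\pi(x, \bar a)$, and let $p(\bar x) = \tp(\bar a/\emptyset)$. For each $\bar a' \models p$ the partial type $\pi(x, \bar a')$ cuts out a conjugate subgroup $H_{\bar a'}$ of bounded index. The key step is a type-definable variant of Baldwin--Saxl in $\NIP$: the intersection $K := \bigcap_{\bar a' \models p} H_{\bar a'}$ equals some finite subintersection $H_{\bar a'_1} \cap \dots \cap H_{\bar a'_n}$, so $K$ is type-definable of bounded index. Since $K$ is plainly $\Aut(\M)$-invariant, it is type-definable over $\emptyset$. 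Hence $G^{00}_{\emptyset} \leq K \leq H$; as $H$ was arbitrary, $G^{00}_{\emptyset} \leq G^{00}_{A}$, and the reverse inclusion is trivial.

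For part (2), the invariance-only assumption blocks a direct appeal to Baldwin--Saxl since the family $\{H_{\bar a'}\}$ need not be uniformly type-definable. Following Gismatullin, I would work instead with the combinatorial notion of an $n$-thick subset $X \subseteq G$: $X$ is symmetric, contains $1$, and for every $g_1, \dots, g_n \in G$ there exist $i < j$ with $g_i^{-1} g_j \in X$. The crucial fact is that $G^{\infty}$ can be written as a countable product of the intersection $X_0$ of all $\emptyset$-definable thick subsets of $G$. Since thickness is a property of $X \subseteq G$ with no reference to parameters, the collection of $\emptyset$-definable thick sets is itself $\Aut(\M)$-invariant, and hence $G^{\infty}$ is $\emptyset$-invariant.

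The main obstacle in (1) is the type-definable Baldwin--Saxl step: if the intersection were not captured by any finite subintersection, then by compactness and Ramsey one could extract an indiscernible sequence $(\bar a'_i)_{i<\omega}$ of realizations of $p$ along which the chain of partial intersections strictly refines, and representatives of the successive cosets would witness an alternation pattern in some formula of $\pi$ contradicting $\NIP$. For (2) the main obstacle is the non-abelian case: Shelah's argument in \cite{Sh886} exploits commutativity to manipulate cosets directly, whereas the thick-set technology of \cite{Jakub} is needed to control conjugation in the general setting and to verify that the countable product of $X_0$ is actually a subgroup.
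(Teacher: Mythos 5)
This statement is labelled a \emph{Fact} and the paper supplies no proof of it; it simply cites Shelah \cite{MR2361885} for $G^{00}$ and Shelah/Gismatullin \cite{Sh886,Jakub} for $G^{\infty}$. The closest the paper comes to a proof is in Section~\ref{sec: connected components}, where relative-to-a-predicate versions are established: Proposition~\ref{thm: existence of relative G^00} invokes the ``usual proof of the existence of $G^{00}$ in $\NIP$'' to conclude that a type-definable subgroup of bounded index has only \emph{boundedly} many conjugates, and Theorem~\ref{thm: relative G^infty exists} reruns Gismatullin's argument with the sets $X_{A}=\{a^{-1}b : a\equiv_{A} b\}$ and the absorption property $(X_{A})^{G}\subseteq (X_{A})^{2}$. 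So there is no internal proof to match your attempt against; I judge it on its own merits.

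For part~(1) your overall plan (intersect all $\emptyset$-conjugates of $H$, show the result is still type-definable of bounded index, then use $\Aut(\M)$-invariance) is the right shape, but the pivotal step is misstated. Baldwin--Saxl, even in its $\NIP$ form, is a statement about a \emph{single} formula defining a family of subgroups; for a type-definable $H=\bigcap_{i}\phi_{i}(x,\bar a)$ it gives a separate breadth bound $n_{i}$ for each $\phi_{i}$ with no uniformity, so there is no reason $K=\bigcap_{\bar a'\models p}H_{\bar a'}$ should reduce to a \emph{finite} subintersection. The correct claim, and the one the cited proofs establish, is that there are only \emph{boundedly} many distinct conjugates $H_{\bar a'}$ (say at most $2^{|T|+|\bar a|}$), so $K$ is a small intersection of bounded-index type-definable subgroups, hence still type-definable and of bounded index. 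Moreover your sketch of how $\NIP$ enters does not actually produce an $\IP$ pattern: from a strictly descending chain $K_{1}\supsetneq K_{2}\supsetneq\cdots$ you only get, for $g_{i}\in K_{i}\setminus K_{i+1}$, the one-sided information $g_{i}\in H_{\bar a_{j}}$ for $j\le i$ and $g_{i}\notin H_{\bar a_{i+1}}$; you have no control over membership for $j>i+1$, and even after extracting an indiscernible sequence $(\bar a_{i},g_{i})_{i}$ the indiscernibility relates $\tp(g_{i},\bar a_{i+1})$ to $\tp(g_{j},\bar a_{j+1})$, not $\tp(g_{i},\bar a_{j})$ across positions. Shelah's actual proof has an extra combinatorial twist, manipulating coset representatives and differences (much as in the paper's Theorem~\ref{thm: relative G^infty exists} proof) to manufacture a genuine two-sided alternation in a fixed formula.

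Part~(2) is a sketch rather than a proof. You correctly identify that the abelian-only coset manipulation must be replaced by Gismatullin's thick-set technology, but the specific claim that ``$G^{\infty}$ can be written as a countable product of the intersection $X_{0}$ of all $\emptyset$-definable thick subsets'' is not the precise formulation: what Gismatullin proves is that $G^{\infty}_{A}=\langle X_{A}\rangle$ for $X_{A}=\{a^{-1}b:a\equiv_{A}b\}$, together with the $\NIP$-driven absorption $(X_{A})^{G}\subseteq (X_{A})^{2}$ and a bound on the number of products needed. Since neither the existence of a bounded power $n$ with $\langle X_{A}\rangle = X_{A}^{n}$ nor the $\emptyset$-invariance of the resulting group is spelled out, this part would need to be filled in along the lines of Theorem~\ref{thm: relative G^infty exists} of the paper.
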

Of course $G^{0}\supseteq G^{00}\supseteq G^{\infty}$, and there
are $\NIP$ examples where $G^{00}\supsetneq G^{\infty}$ \cite{AnnalisaAnand}.

So let $M$ be a model of an $\NIP$ theory and let $G\left(M\right)$
be an $M$-definable group; to simplify the notation we assume that
$G$ is the whole universe. Assume that $H$ is an externally definable
subgroup of $M$ (i.e., $H=M\cap\phi\left(x\right)$ for some $\phi\left(x\right)\in L\left(\M\right)$
is a subgroup of $M$). In general it need not contain any $M$-definable
subgroups:
\begin{example}
Let $M\succ\left(\mathbb{R},+,\cdot\right)$ be $\left(2^{\aleph_{0}}\right)^{+}$-saturated.
Then $M$ contains the subgroup $H=\left\{ x\in M:\bigwedge_{r\in\mathbb{R}}\left|x\right|<r\right\} $
of infinitesimal elements. Note that $H$ is externally definable
as ``$M\cap (c<x<d)$'' where $c,d\in\M$ realize the appropriate cuts of
$M$. However $H$ does not contain any $M$-definable subgroups.
\end{example}

However we show that these connected components are not affected by adding
externally definable sets:
\begin{thm*}
Let $T$ be an $\NIP$ theory in the language $L$, and $M\models T$.
Let $T'=\Th\left(M^{\ext}\right)$, and let $\M'$ be a monster model
of $T'$. Let $\M=M'\restriction L$ --- a monster model of $T$. Note that $T'$ is $\NIP$ by Fact \ref{fac: Shelah's theorem on externally definable sets}.
Then we have:
\begin{enumerate}
\item $G^{0}\left(\M\right)=G^{0}\left(\M'\right)$ (Theorem \ref{thm: G^0(M) = G^0(M^ext)}),
\item $G^{00}\left(\M\right)=G^{00}\left(\M'\right)$ (Corollary \ref{cor: G^00 is not changed in Shelah's expansion}),
\item $G^{\infty}\left(\M\right)=G^{\infty}\left(\M'\right)$ (Corollary \ref{cor: G000(M)=G000(Mext)}).
\end{enumerate}
\end{thm*}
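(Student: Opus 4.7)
For each $*\in\{0,00,\infty\}$ the inclusion $G^{*}(\M')\subseteq G^{*}(\M)$ will be immediate: since $L\subseteq L'$ and every $L'$-automorphism of $\M'$ restricts to an $L$-automorphism of $\M$, any $L$-definable (respectively $L$-type-definable, $L$-invariant-over-$\emptyset$) bounded-index subgroup of $G(\M)=G(\M')$ is automatically of the corresponding $L'$-kind, so it participates in the intersection computing $G^{*}(\M')$. The content of the theorem lies in the reverse inclusion. My plan is to work with a concrete realization of $\M'$ as the Shelah expansion of $\M$ built using external parameters from a larger monster $\M^{+}\succ\M$ of $T$: for each $L$-formula $\phi(x,\bar{y})$ and $\bar{b}\in\M^{+}$ add a predicate $R_{\phi,\bar{b}}$ interpreted in $\M'$ as $\phi(\M,\bar{b})=\{a\in\M:\M^{+}\models\phi(a,\bar{b})\}$. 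By quantifier elimination in $T'$ (Fact \ref{fac: Shelah's theorem on externally definable sets}), any $L'$-formula over $\emptyset$ then unravels to an $L$-formula over a finite tuple in $\M^{+}$, and any $L'$-type over $\emptyset$ to an $L$-type over a small set $\bar{B}^{*}\subseteq\M^{+}$.

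For $G^{00}$ (corollary (2)) and $G^{\infty}$ (corollary (3)) I would combine this unraveling with Fact \ref{fac: existence of connected components}. Taking a bounded-index $L'$-type-definable (resp.\ $L'$-invariant) over $\emptyset$ subgroup $H'\leq G(\M')$, the unraveling produces a small $\bar{B}^{*}\subseteq\M^{+}$ such that $H'$, viewed in $G(\M)$, is $L$-type-definable (resp.\ $L$-invariant) over $\bar{B}^{*}$. Fact \ref{fac: existence of connected components}(1) (resp.\ (2)) then yields $G^{00}(\M)=G^{00}_{\bar{B}^{*}}(\M)\leq H'$ (resp.\ the corresponding statement for $G^{\infty}$); intersecting over all such $H'$ delivers the inclusion.

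For $G^{0}$, which is the underlying Theorem \ref{thm: G^0(M) = G^0(M^ext)}, no corresponding base-point independence is recorded in Fact \ref{fac: existence of connected components}, so I would argue directly. Given a finite-index $L'$-definable-over-$\emptyset$ subgroup $H'=\phi(\M,\bar{b}^{*})$ of $G(\M')$, consider its $\Aut(\M/\emptyset)$-orbit: lifting each $\sigma$ via strong homogeneity to $\tilde{\sigma}\in\Aut(\M^{+}/\emptyset)$ produces the uniformly $L$-definable family $\{\phi(\M,\tilde{\sigma}(\bar{b}^{*}))\}$ of finite-index subgroups of $G(\M)$. Baldwin--Saxl (valid in NIP) collapses the intersection $N:=\bigcap_{\sigma}\sigma(H')$ to a finite sub-intersection, so $N$ is $L$-definable of finite index; since $N$ is $\Aut(\M/\emptyset)$-invariant by construction, it is $\emptyset$-definable in $T^{\operatorname{eq}}$, and therefore $G^{0}(\M)\leq N\leq H'$.

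The main obstacle will be the $G^{\infty}$ case: the step ``$L'$-invariance of $H'$ over $\emptyset$ implies $L$-invariance of $H'$ over a small $\bar{B}^{*}\subseteq\M^{+}$'' is more delicate than its type-definable counterpart, since ``invariant'' is weaker than ``definable''. Making it go through requires identifying a small parameter set whose pointwise fixator in $\Aut(\M^{+}/\emptyset)$ restricts, on $\M$, to a subgroup of $\Aut(\M'/\emptyset)$ large enough to witness the invariance of $H'$. A secondary point, in the $G^{0}$ case, will be justifying the descent from ``$\Aut(\M/\emptyset)$-invariant $L$-definable set'' to ``$\emptyset$-definable in $T^{\operatorname{eq}}$'' via canonical parameters --- mostly routine, but needed to place $N$ into the intersection defining $G^{0}(\M)$.
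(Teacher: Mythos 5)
Your first inclusion and the general plan of realizing $\M'$ concretely via a pair $(N',M')$ and then unraveling $L'$-formulas into $L(N')$-formulas are correct, and your argument for $G^{0}$ (external Baldwin--Saxl plus ``$\Aut_{L}(\M/\emptyset)$-invariant $+$ externally definable $\Rightarrow$ internally $\emptyset$-definable'', which itself needs saturation of the pair to be justified) is essentially the paper's argument for Theorem \ref{thm: G^0(M) = G^0(M^ext)}.

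However, your treatment of $G^{00}$ and $G^{\infty}$ has a genuine gap, and it is not the one you flag at the end. You propose to unravel an $L'$-type over $\emptyset$ defining $H'\leq G(\M')$ into an $L$-type $\Sigma(x,\bar{B}^{*})$ with $\bar{B}^{*}$ a small subset of $\M^{+}$, so that $H'=\Sigma(\M,\bar{B}^{*})$, and then invoke Fact \ref{fac: existence of connected components} to get $G^{00}(\M)=G^{00}_{\bar{B}^{*}}(\M)\subseteq H'$. But Fact \ref{fac: existence of connected components} is a statement about type-definable \emph{subgroups of the monster model} of $T$: to apply it you would need $\Sigma(\M^{+},\bar{B}^{*})$ (or $\Sigma$ evaluated in whatever $T$-monster you work in) to be a bounded-index subgroup of $G(\M^{+})$. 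What you actually know is only that $\Sigma(\M,\bar{B}^{*})=\Sigma'(\M')$ is a subgroup of $G(\M)$, i.e.\ that $\Sigma$ cuts out a subgroup \emph{relative to the predicate} $\M\subseteq\M^{+}$. Because $\M^{+}$ realizes the partial type $\Sigma$ at many new points, the set $\Sigma(\M^{+},\bar{B}^{*})$ need not be closed under the group operation at all; the statement ``$\Sigma(x)\wedge\Sigma(y)\vdash\Sigma(xy)$'' holds only modulo $\P(x)\wedge\P(y)$, not absolutely. So $G^{00}_{\bar{B}^{*}}(\M^{+})\subseteq\Sigma(\M^{+},\bar{B}^{*})$ is simply not an instance of Fact \ref{fac: existence of connected components}, and intersecting with $\M$ does not get you off the ground. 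The same objection applies to your $G^{\infty}$ step, independently of the delicacy about invariance that you do mention.

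This is precisely the difficulty that the paper confronts head-on: it introduces the notion of a \emph{hereditary subgroup of $\P(x)$ of hereditarily bounded index} (Definition \ref{def: hereditary subgroup}), proves from scratch that a \emph{relative} $G^{00}_{L}(N,M)$ exists and has absolutely bounded index (Proposition \ref{thm: existence of relative G^00}), and then shows by a transfinite induction along a chain of saturated pairs (Theorem \ref{thm: Collapse of G^00}) that this relative $G^{00}$ is $\Aut_{L}(M)$-invariant, hence $\emptyset$-type-definable in $T$, hence contains $G^{00}(\M)$; an analogous development (with a Jakub\'y's-theorem-style argument on indiscernible sequences in the pair) handles $G^{\infty}$. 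None of that is a routine ``citing Fact \ref{fac: existence of connected components}''; it is the substance of Section \ref{sec: connected components}, and your proposal does not supply a substitute for it.
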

For the proof we first establish existence of the corresponding connected
components relatively to a predicate and a sublanguage, and then we
show that each of these relative connected components coincides with the corresponding connected
component of the theory induced on the predicate.
\begin{cor*}
Let $T$ be $\NIP$ and let $M$ be a model of $T$. Assume that $G$ is an
externally definable subgroup of $M$ of finite index. Then it is
internally definable. 
\end{cor*}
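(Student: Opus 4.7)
The plan is to use the preceding theorem's equality $G^{0}(\M) = G^{0}(\M')$ to extract an internally definable finite-index subgroup sitting inside the given subgroup (which I will relabel $H$, reserving $G$ for the ambient group as in the theorem), and then observe that $H$ is a finite union of cosets of this internally definable subgroup.

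First, pass to Shelah's expansion: by Fact \ref{fac: Shelah's theorem on externally definable sets}, $H$ is $L'$-$\emptyset$-definable in $M^{\ext}$, say by a predicate $P$. The statement ``$P$ is a subgroup of $G$ of index exactly $n$'' is first-order in $L'$ and holds in $M^{\ext}$, hence in the monster $\M'$ of $T' = \Th(M^{\ext})$. So $P(\M')$ is an $L'$-$\emptyset$-definable finite-index subgroup of $G(\M')$, and in particular $G^{0}(\M') \subseteq P(\M')$.

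By the preceding theorem, $G^{0}(\M) = G^{0}(\M')$, and writing $G^{0}(\M) = \bigcap_{i} \phi_{i}(\M)$ as an intersection of $L$-$\emptyset$-definable finite-index subgroups of $G(\M)$, saturation of $\M'$ together with compactness yields finitely many $\phi_{i_{1}}, \ldots, \phi_{i_{k}}$ whose conjunction $K$ defines an $L$-$\emptyset$-definable finite-index subgroup of $G(\M)$ satisfying $K(\M') \subseteq P(\M')$. Restricting to $M$ (using $M \preceq \M$ in $L$), $K(M)$ is an internally definable finite-index subgroup of $M = G(M)$ contained in $H$. Then $H$ is a finite union of $K(M)$-cosets: choosing representatives $g_{1}, \ldots, g_{m} \in H$, we have $H = \bigsqcup_{j=1}^{m} g_{j} K(M)$, which is $L(M)$-definable.

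The main work is done by the preceding theorem; the only nontrivial step in the corollary itself is the compactness argument producing an $L$-$\emptyset$-definable finite-index subgroup inside $P(\M')$ from the type-definable containment $G^{0}(\M) \subseteq P(\M')$. Once this is in hand, the passage from an internally definable finite-index subgroup of $H$ to an $L(M)$-definition of $H$ is a standard finite-union-of-cosets argument and presents no further obstacle.
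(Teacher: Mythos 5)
Your proof is correct, and it is the natural way to deduce the corollary from the $G^0$ result: from $G^{0}(\M) = G^{0}(\M') \subseteq P(\M')$ and the fact that $G^{0}(\M)$ is a small intersection of $L(\emptyset)$-definable finite-index subgroups, compactness in $\M'$ extracts an $L(\emptyset)$-definable finite-index $K$ with $K(\M') \subseteq P(\M')$; descending by $L'$-elementarity to $M^{\ext}$ gives $K(M) \subseteq H$, and the finite-union-of-cosets step finishes. The non-circularity point you gesture at is indeed the thing to check: the monster-level equality $G^{0}(\M)=G^{0}(\M')$ is established in the first paragraph of the proof of Theorem \ref{cor: ext def fin ind is def} independently of the corollary, so using it as a black box is legitimate.

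It is worth noting, though, that the paper does not actually prove this statement as a corollary. Its content appears \emph{inside} Theorem \ref{cor: ext def fin ind is def}: for arbitrary $M$ one passes to a sufficiently saturated pair extension $(N',M')\succ(N,M)$, applies the first paragraph (which shows each $G_{\phi,n}^{M'}$ is an $M'$-definable finite-index subgroup contained in $\phi(M',b)$), and pulls back to $M$ by elementarity of the pair. So the two arguments differ only in how they produce the internally definable finite-index subgroup sitting inside $H$: the paper tracks the explicit Baldwin--Saxl-type groups $G_{\phi,n}$, while you treat the $G^0$ equality as a package and invoke compactness. Both then conclude with the same coset argument. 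Your route is the cleaner way to present the statement as a corollary once the theorem is available; the paper's route is what one must do to prove the theorem in the first place, after which the corollary is genuinely contained in it rather than derived from it.
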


Finally in Section \ref{sec: TopDyn and Ellis} of the paper we return to the motivating context of ``tame topological dynamics''. We will explain the set-up in some detail in Section \ref{sec: TopDyn and Ellis}. But we give here a brief description of the notions so as to be able to state the main results. The context here is an $\NIP$ theory $T$, model $M$ of $T$ and definable group $G$, defined over $M$. The type space $S_{G}(M^{\ext})$ is acted on ``definably'' by $G(M)= G(M^{\ext})$, 
and also has a canonical semigroup structure, continuous in the first coordinate. There exist minimal closed $G$-invariant subsets of the type space, and elements of such ``minimal subflows'' are called {\em almost periodic types}. Any two minimal closed $G(M)$-invariant subspaces of $S_{G}(M^{\ext})$ are ``isomorphic'' and coincide with the unique universal minimal definable $G(M)$-flow.  So this is a rather basic invariant of $G$ (or rather $G(M)$) given by the set-up of topological dynamics. In \cite{AnandTopDyn} it was proved that when $G$ has $\fsg$ then there is a unique minimal closed $G(M)$-invariant subspace of $S_{G}(M^{\ext})$ and it coincides with the set of generic types. We will study the other extreme  case of definable amenability, when $G$ has an $f$-generic type, definable over $M$. And we prove as a part of Proposition \ref{prop: definable f-generics}:

\begin{thm*}
Suppose $G$ has a global $f$-generic, definable over $M$. Then for $p\in S_{G}(M^{\ext})$, $p$ is almost periodic if and only if $p$ is a ``definable $f$-generic'' in the sense that the unique global heir of $p$ is $f$-generic.
\end{thm*}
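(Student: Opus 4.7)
The plan is to introduce the set $\mathrm{DFG}:=\{p\in S_G(M^{\ext}):p^{(h)}\text{ is }f\text{-generic}\}$, where $p^{(h)}\in S_G(\M')$ is the unique global heir of $p$ (existing because every type over $M^{\ext}$ is definable in $T^{\ext}:=\Th(M^{\ext})$ by Fact~\ref{fac: Shelah's theorem on externally definable sets}), and to prove that $\mathrm{DFG}$ coincides with the almost periodic locus. The overall strategy is to show that $\mathrm{DFG}$ is a nonempty closed $G(M)$-invariant and minimal subflow of $S_G(M^{\ext})$; both directions of the equivalence then follow by general topological dynamics.

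For the closure and invariance, the heir map $h:p\mapsto p^{(h)}$ is continuous and $G(M)$-equivariant because the defining schema of $p$ varies continuously with $p$ and determines $p^{(h)}$. The $f$-generic locus in $S_G(\M')$ is closed, since being $f$-generic is the intersection, over $g\in G(\M')$, $L^{\ext}$-formulas $\phi(x,y)$, and pairs $\bar b\equiv_{M^{\ext}}\bar b'$ in $\M'$, of the clopen condition $\phi(gx,\bar b)\in q\Leftrightarrow\phi(gx,\bar b')\in q$; and $f$-genericity is manifestly $G$-invariant. For nonemptiness, apply Theorem~\ref{thm: fsg and definable generics}(2) to upgrade the given $f$-generic in $T$ to a global $f$-generic $\tilde q\in S_G(\M')$ definable over $M$ in $T^{\ext}$; then $\tilde q$ is the unique heir of $p_0:=\tilde q|_{M^{\ext}}$, so $p_0\in\mathrm{DFG}$.

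The heart of the proof is the minimality of $\mathrm{DFG}$. I would exploit the structure theory of $f$-generics in definably amenable NIP groups: the stabilizer in $G(\M')$ of the definable $f$-generic $\tilde q$ contains $G^{00}$, so the orbit map $g\mapsto g\cdot\tilde q$ factors through a continuous injection $G/G^{00}\hookrightarrow S_G(\M')$ whose image carries the heirs of all points of $\mathrm{DFG}$. Since $G^{00}$ is unchanged between $T$ and $T^{\ext}$ by Section~\ref{sec: connected components}, and since the image of $G(M)$ is dense in $G/G^{00}$ under the logic topology, the $G(M)$-orbit of any point of $\mathrm{DFG}$ is dense in $\mathrm{DFG}$, giving minimality. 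Consequently every $p\in\mathrm{DFG}$ is almost periodic; conversely, for almost periodic $p$, the minimal subflow $\overline{G(M)\cdot p}$ must meet $\mathrm{DFG}$ (which I would verify via the canonical continuous retraction from invariant types onto $f$-generic ones developed in Sections~\ref{sec: Extracting the f.s. part} and~\ref{sec: Extracting the f.s. part of a measure}, producing an $f$-generic point inside any $G(M)$-invariant closed set) and so the two minimal subflows coincide.

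I expect the main obstacle to be the structural claim underpinning minimality: that every $p\in\mathrm{DFG}$ has heir in the $G(\M')$-orbit of $\tilde q$, which is what gives the $G/G^{00}$-parameterization of $\mathrm{DFG}$. Once that parameterization is in hand, the preservation of $G^{00}$ from Section~\ref{sec: connected components} and the density of $G(M)$ in $G/G^{00}$ combine with continuity of the heir map and standard Ellis-semigroup reasoning to finish the argument.
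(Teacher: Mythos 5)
Your overall strategy --- show that the set
\[
\mathrm{DFG}=\{p\in S_G(M^{\ext}):\text{the unique global heir of }p\text{ is }f\text{-generic}\}
\]
is a \emph{single minimal subflow}, and that the heirs of all its points lie in a single $G(\M')$-orbit parameterized by $G/G^{00}$ --- is based on a structural claim that is false. Take $T$ an $o$-minimal expansion of a real closed field and $G=(R,+)$. Then $G=G^{00}$, and both the type at $+\infty$ and the type at $-\infty$ are definable, $G$-invariant, hence $f$-generic. Their restrictions to $M^{\ext}$ are two distinct fixed points of the $G(M^{\ext})$-action on $S_G(M^{\ext})$, so they give two \emph{distinct} one-point minimal subflows, and $\mathrm{DFG}$ is not minimal. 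Moreover the heir of the $-\infty$ type is not of the form $g\cdot\tilde q$ for any $g$, since $\tilde q$ (the $+\infty$ heir) is $G(\M')$-fixed; so the ``$G/G^{00}\hookrightarrow S_G(\M')$ parameterization of $\mathrm{DFG}$'' that you identify as the heart of the argument fails. The correct picture, which is exactly what the statement asserts, is that $\mathrm{DFG}$ is the \emph{union} of all minimal subflows, and that union can contain several of them.

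The two secondary steps you lean on are also not available. Continuity of the heir map $p\mapsto p^{(h)}$ is not automatic: membership of a fixed $\phi(x,c)\in L^{\ext}(\M')$ in $p^{(h)}$ is governed by the $\phi$-definition of $p$, and the map $p\mapsto d_\phi p$ is not continuous on $S_G(M^{\ext})$, so you cannot conclude closedness of $\mathrm{DFG}$ this way. And the retraction $F_M$ of Sections~\ref{sec: Extracting the f.s. part}--\ref{sec: Extracting the f.s. part of a measure} goes from $M$-invariant global types to finitely satisfiable ones; it is not a retraction onto $f$-generics and does not produce an $f$-generic point inside an arbitrary closed $G(M)$-invariant set, so it cannot drive the converse direction as you suggest. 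The paper instead proves the forward direction by a direct verification of the almost-periodicity criterion of Fact~\ref{fac: almost periodicity} (using that $\{g:g\phi\in\bar p\}$ contains $\Stab(\bar p)=G^{00}$ and is $M_0$-definable, hence finite index), and proves the converse by starting from one known minimal subflow $I=\overline{G(M_0)p_0}$ with $p_0\in\mathrm{DFG}$, transporting along the $G(M_0)$-flow isomorphism $I\to J,\ p'\mapsto p'\cdot q$ to the given minimal subflow $J$, and then computing directly that for $r=p_0\cdot q$ each $\Stab_\phi(r)$ equals some $\Stab_\psi(p_0)$ and hence has finite index, so that $\bar r$ is a definable $f$-generic.
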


As proved in the previous section, $G/G^{00}$ is the same whether computed in $T$ or in $Th(M^{\ext})$ and we just write $G/G^{00}$. 
Fix a minimal closed $G(M)$-invariant subspace $\mathcal M$ of $S_{G}(M^{\ext})$, and an idempotent $u\in \mathcal{M}$. Then $u{\mathcal M}$ is a subgroup of $S_{G}(M^{\ext})$, which we call the Ellis group  (attached to $M,G$) and whose isomorphism type does not depend on the choice of $\mathcal M$ or $u$. In fact there is also a certain non-Hausdorff topology on $u{\mathcal M}$, but it will not concern us in the current paper. The canonical surjective homomorphism $G\to G/G^{00}$ factors naturally through the space $S_{G}(M^{\ext})$, namely we have a well-defined continuous surjection $\pi:S_{G}(M^{\ext})\to G/G^{00}$ taking $\tp(g/M)$ to the coset $gG^{00}$, and the restriction of $\pi$ to the group $u{\mathcal M}$ is a surjective homomorphism. 
We will say that the Ellis group $u{\mathcal M}$ coincides with (or equals) $G/G^{00}$ if $\pi|u{\mathcal M}$ is an isomorphism. It was suggested by Newelski in \cite{New4} that in many cases, $u{\mathcal M}$ does equal $G/G^{00}$. Let us formalize this by conjecturing that when $G$ is definably amenable, then $u{\mathcal M}$ equals $G/G^{00}$.  This was essentially proved in \cite{AnandTopDyn} when $G$ is $\fsg$. We will prove some more cases in Sections \ref{sec: EllisStarts} -- \ref{EllisEnds}:

\begin{thm} 
\label{thm: Ellis group conjectures}
Under the assumptions above, the Ellis group coincides with $G/G^{00}$ in the following cases:
\begin{enumerate}
\item \label{thm: EllisDefExtrAm} $G$ is definably extremely amenable.
\item \label{thm: EllisFSG} $G$ is $\fsg$.
\item \label{thm: EllisDefFGen} $G$ has a definable $f$-generic with respect to $M$.
\item  \label{thm: EllisDPMin} $G$ is definably amenable, $dp$-minimal.
\item \label{thm: EllisOMin} $T$ is an $o$-minimal expansion of a real closed field and $G$ is definably amenable  (computed in $T$ or equivalently in $Th(M^{\ext})$). 
\end{enumerate}

\end{thm}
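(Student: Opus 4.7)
The plan is to analyze the canonical continuous surjective homomorphism $\pi|u\mathcal{M}\colon u\mathcal{M}\to G/G^{00}$ and establish injectivity separately in each of the five cases. By the results of Section~\ref{sec: connected components}, the group $G^{00}$ is unchanged by passing to $\Th(M^{\ext})$, so the target of $\pi$ is unambiguous, and throughout we are free to work in $\Th(M^{\ext})$ where every type over $M^{\ext}$ is definable.

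Case~(\ref{thm: EllisDefExtrAm}) is immediate once the characterization of definable extreme amenability by $G^{00}=G$ is in hand: the target $G/G^{00}$ is trivial, so it suffices to show the Ellis group is trivial. Theorem~\ref{thm: lifting measures to Shelah's expansion}(2) supplies a $G(M)$-fixed point $p'\in S_{G}(M^{\ext})$, and $\{p'\}$ is already a minimal closed $G(M)$-invariant subflow, so $u\mathcal{M}=\{p'\}$. For case~(\ref{thm: EllisFSG}), I would use Theorem~\ref{thm: fsg and definable generics}(1) to transport the $\fsg$ property to $\Th(M^{\ext})$ and then invoke the identification (from \cite{AnandTopDyn}) of almost periodic types with generic types, which yields a direct inverse of $\pi|u\mathcal{M}$.

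The main new content is case~(\ref{thm: EllisDefFGen}). Theorem~\ref{thm: fsg and definable generics}(2) guarantees that $G$ still has a definable (over $M$) $f$-generic in $\Th(M^{\ext})$, while Proposition~\ref{prop: definable f-generics} identifies the almost periodic types in $S_{G}(M^{\ext})$ with those $p$ whose unique global heir is $f$-generic. The strategy is to show that two elements of $u\mathcal{M}$ with the same $\pi$-image already coincide. Given $p,q\in u\mathcal{M}$ with $\pi(p)=\pi(q)$, I would lift to the canonical global $f$-generic heirs $\tilde p,\tilde q$, extract realizations $a\models\tilde p$ and $b\models\tilde q$, and observe that $ab^{-1}\in G^{00}$; the desired equality $\tilde p=\tilde q$ (hence $p=q$) would then follow from the statement that $G^{00}$ fixes every global $f$-generic definable over $M$, i.e.\ that the (left) stabilizer of $\tilde p$ contains $G^{00}$.

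For cases~(\ref{thm: EllisDPMin}) and~(\ref{thm: EllisOMin}) the plan is to reduce to (\ref{thm: EllisFSG})--(\ref{thm: EllisDefFGen}) via structural dichotomies. For definably amenable $dp$-minimal $G$, one hopes to show (possibly after replacing $G$ by a suitable quotient) that $G$ admits either a global $\fsg$ type or a global $M$-definable $f$-generic. For $T$ an $o$-minimal expansion of a real closed field, one would use Peterzil--Starchenko-style structure (a normal definable subgroup exhibiting a compact quotient, together with control of the torsion-free part) to split the question into pieces each falling under (\ref{thm: EllisFSG}) or~(\ref{thm: EllisDefFGen}). The hardest step I anticipate is exactly case~(\ref{thm: EllisDefFGen}): pinning down the interaction between the stabilizer of a definable $f$-generic and the semigroup product on $S_{G}(M^{\ext})$ cleanly enough to force injectivity of $\pi|u\mathcal{M}$; once that foundational case is secure, the $dp$-minimal and $o$-minimal reductions should follow, although verifying exhaustiveness of the dichotomy in the $o$-minimal setting will itself require care.
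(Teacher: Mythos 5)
Parts (\ref{thm: EllisDefExtrAm}), (\ref{thm: EllisFSG}), and (\ref{thm: EllisDPMin}) of your outline track the paper's argument. The gap is in case (\ref{thm: EllisDefFGen}). You take $p,q\in u\mathcal{M}$ with $\pi(p)=\pi(q)$, lift to the $f$-generic global heirs $\tilde p,\tilde q$, take realizations $a\models\tilde p$ and $b\models\tilde q$, observe $h:=ab^{-1}\in G^{00}$, and want to deduce $\tilde p=\tilde q$ from the fact that the stabilizer of $\tilde p$ contains $G^{00}$. But $h$ lies in the model where $a$ and $b$ are realized, not in the monster over which $\tilde p,\tilde q$ are types, so $\tp(a/\M)=\tp(hb/\M)$ is \emph{not} the left translate $h\cdot\tilde q$, and the stabilizer identity is of no use here; indeed realizations of any two types with the same $\pi$-image differ by an element of $G^{00}$, so this soft argument by itself cannot force the types to coincide. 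Proposition \ref{prop: definable f-generics} instead fixes a single definable $f$-generic $\bar p$ with $\Stab(\bar p)=G^0$, shows that a minimal subflow $\mathcal M$ is already a group and equals $\left\{(g\bar p)\restriction M_0 : g\in G(\M)\right\}$, and then verifies -- using that $\bar p$ is definable over $M_0$ and each $\Stab_\phi(\bar p)$ is an $M_0$-definable subgroup of finite index -- that $(g\bar p)\restriction M_0$ determines and is determined by the coset $gG^0$. It is this explicit coset parametrization of $\mathcal M$, not the stabilizer identity alone, that yields injectivity of $\pi$ on $\mathcal M$.

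Your sketch of (\ref{thm: EllisOMin}) is also somewhat misdescribed. The paper does use the structure theorem: a normal $M$-definable $H\le G$ which is definably connected torsion-free solvable, with $T=G/H$ definably compact, hence $\fsg$; moreover $H$ admits a global left $H$-invariant type definable over $M$, so $H$ is definably \emph{extremely} amenable. But one does not apply cases (\ref{thm: EllisFSG}) and (\ref{thm: EllisDefFGen}) to the pieces and glue. The key step is a lemma asserting that the induced map $S_G(M_0)\to S_T(M_0)$ restricts to a $T(M_0)$-flow isomorphism from any minimal subflow $\mathcal M(G)$ of $S_G(M_0)$ onto the minimal subflow $\mathcal M(T)$, and its proof uses precisely the fixed-point property of $H(M_0)$ in $\mathcal M(G)$ furnished by extreme amenability -- not merely the existence of a definable $f$-generic for $H$. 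One then concludes from the $\fsg$ case applied to $T$, together with $H=H^{00}\le G^{00}$, which gives $G/G^{00}\cong T/T^{00}$.
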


\subsection*{Notation}

$T$ will always denote a complete $\NIP$ theory in a language $L$ and $\M \models T$ will be a monster model. As usual, $S\left(A\right)$ denotes the space of types over $A$.
We also write $S^{\inv}\left(A,B\right)$ (resp. $S^{\fs}\left(A,B\right)$)
for the set of types over $A$ invariant (resp. finitely satisfiable
) over $B$. Both are closed subsets of $S\left(A\right)$ as $S^{\fs}\left(A,B\right)$
is the closure of the set of types over $A$ realized in $B$ and
\[
S^{\inv}\left(A,B\right)=\left\{ p\in S\left(A\right):\bigwedge_{a\equiv_{B}a',\phi\left(x,y\right)\in L\left(B\right)}\left(p\vdash\phi\left(x,a\right)\leftrightarrow\phi\left(x,a'\right)\right)\right\} \mbox{.}
\]

Whenever we say measure over a set of parameters $A$, we mean a finitely additive Keisler measure
(equivalently, a regular Borel probability measure on $S\left(A\right)$), see e.g. \cite{NIP2}. For a measure $\mu$ we denote by $S(\mu)$ its support: the set of types weakly random for $\mu$, i.e. the closed set of all $p$ such that for any $\phi(x)$, $\phi(x) \in p$ implies $\mu ( \phi(x) ) >0$. Let $\mes(A)$ denote the set of measures over $A$, it is naturally equipped with a compact topology as a closed subset of $[0,1]^{L(A)}$ with the product topology. Every type over $A$ can be identified with the $\{0,1\}$-measure concentrating on it, thus $S(A)$ is identified with a closed subset of $\mes(A)$.

We will assume some basic knowledge of forking for types and measures. E.g., in $\NIP$ a type does not fork over a
model if and only if it is invariant over it, etc \cite{NIP2,CheKap}.

\subsection*{Acknowledgements}
We are grateful to the referee for a very thorough reading and for pointing out numerous deficiencies in the original version of the article.

\section{Existence of invariant heirs and definability of measures} \label{sec: Existence of invariant heirs and definability of measures}

\subsection{Existence of global invariant heirs for measures over models}

We will be assuming that $T$ is a complete $\NIP$ theory throughout the article.

\begin{fact}[\cite{CheKap}]
\label{fac: forking for types in NIP}Let $T$ be $\NIP$ and $M\models T$.
\begin{enumerate}
\item \label{fac: forking for types in NIP 1}A formula $\phi\left(x,a\right) \in L(\M)$ forks over $M$ if and only if it
divides over $M$.
\item \label{fac: forking for types in NIP 2}Every type $p\left(x\right)\in S\left(M\right)$, where $x$ can be
a tuple of variables of arbitrary length, admits a global extension
which is both $M$-invariant and an heir over $M$.
\end{enumerate}
\end{fact}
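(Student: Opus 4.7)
For part (1), the non-trivial direction is that non-dividing implies non-forking. My plan is to show directly that if $\phi(x,a)$ does not divide over $M$, then $\phi(x,a)$ extends to a global type that does not divide over $M$. I would proceed by building a ``strictly invariant'' Morley-style sequence: first fix an $M$-indiscernible sequence $(a_i)_{i<\omega}$ with $a_0 = a$ such that $\{\phi(x,a_i) : i<\omega\}$ is consistent (existing by non-dividing, Ramsey, and compactness), and let $b$ realize this set. Then try to extend $\tp(b/Ma)$ step by step to a global $M$-invariant type, at each stage adjoining formulas one by one while verifying non-dividing along an indiscernible sequence generated by a global $M$-invariant type produced from the EM-type of $(a_i)$. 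The key technical obstacle, and the real content of the argument, is the ``broom lemma'' of Chernikov--Kaplan: if a disjunction of dividing formulas along such a Morley sequence is consistent with $\phi(x,a)$, one can rewrite it against the indiscernible sequence to produce a $\phi$-pattern contradicting $\NIP$ via the alternation rank of $\phi(x,y)$. This is where the $\NIP$ hypothesis is essential.

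For part (2), I would use (1) as a black box. Pick $N \succ M$ sufficiently saturated --- say $\left|\M\right|^{+}$-saturated --- so that every complete type over $M$ in any relevant arity is realized in $N$. First extend $p$ to an heir $q \in S(N)$; heirs always exist over models by a standard back-and-forth / chain construction using that $M \models T$. Next, apply part (1) to $q$ to produce a global non-forking extension $\tilde q \in S(\M)$; by Fact \ref{fac: forking for types in NIP}(1) together with the classical fact that in $\NIP$ any non-forking extension over a model is invariant over that model, $\tilde q$ is $N$-invariant. Since $M \subseteq N$, this gives $M$-invariance for free.

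It remains to check that $\tilde q$ is an heir of $p$ over $M$. Given any $\phi(x,c) \in \tilde q$ with $c \in \M$, use saturation of $N$ to pick $c' \in N$ with $c' \equiv_M c$; by $N$-invariance $\phi(x,c') \in \tilde q \restriction N = q$, and since $q$ is an heir of $p$ over $M$, some $c'' \in M$ satisfies $\phi(x,c'') \in p$, exactly as required. The bulk of the work is therefore in part (1), namely the broom-lemma core where $\NIP$ is actually consumed; part (2) is a short combinatorial consequence, with the only subtlety being to choose $N$ saturated enough to realize types over $M$ of all arities appearing in formulas of $\tilde q$.
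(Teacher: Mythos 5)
This statement is cited in the paper as Fact~\ref{fac: forking for types in NIP} from \cite{CheKap} and is not proved in the paper, so there is no internal proof to compare against; I will therefore assess the proposal on its own.

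Your outline of part (1) is a reasonable high-level description of the Chernikov--Kaplan approach (Morley sequences via invariant types, broom lemma, alternation rank), though it remains a sketch; I have no objection there. Part (2), however, contains a genuine error. After choosing $q\in S(N)$ an heir of $p$ over $M$ and extending $q$ to a global $\tilde q$ that does not fork over $N$ (hence is $N$-invariant), you assert ``Since $M\subseteq N$, this gives $M$-invariance for free.'' This is backwards: $N$-invariance means $\tilde q$ is fixed by $\Aut(\M/N)$, which is a \emph{smaller} group than $\Aut(\M/M)$, so $N$-invariance is the \emph{weaker} condition and does not yield $M$-invariance. The same confusion propagates into the heir check: from $\phi(x,c)\in\tilde q$ and $c'\equiv_M c$ with $c'\in N$ you infer $\phi(x,c')\in\tilde q$ ``by $N$-invariance,'' but $N$-invariance only lets you replace $c$ by an $N$-conjugate, not an $M$-conjugate. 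So neither the $M$-invariance nor the heir property of $\tilde q$ is actually established, and the approach ``extend to a heir over $N$, then take any non-forking global extension'' does not work as stated: the global extension you obtain may well depend on parameters in $N\setminus M$ in an $\Aut(\M/M)$-sensitive way. The actual argument requires producing the heir and the invariance simultaneously (this is precisely what the proof of Theorem~\ref{thm: invariant heir of a measure exists} in the paper has to contend with in the measure analogue, via a compactness argument intersecting the closed set of approximate heirs with the closed set of invariant measures); one cannot simply bolt the two properties together by extending over an intermediate model.
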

The aim of this section is to generalize \ref{fac: forking for types in NIP 2} to arbitrary measures
in $\NIP$ theories, i.e. to demonstrate that every measure over a
model of an $\NIP$ theory admits a global invariant heir. 
\begin{defn}\label{def: heir of a measure}
We say that $\nu\in\mes\left(\M\right)$ is an \emph{heir} of $\mu\in\mes\left(M\right)$
if for any finitely many formulas $\phi_{0}\left(x,a\right),\ldots,\phi_{n}\left(x,a\right)\in L\left(\M\right)$
and $r_{0},\ldots,r_{n}\in\left[0,1\right)$,
if $\bigwedge_{i\leq n}\left(\nu\left(\phi_{i}\left(x,a\right)\right)>r_{i}\right)$
then $\bigwedge_{i\leq n}\left(\mu\left(\phi_{i}\left(x,b\right)\right)>r_{i}\right)$
for some $b\in M$.\end{defn}
\begin{rem}

\begin{enumerate}
\item Note that for types we recover the usual notion of an heir.
\item A weaker notion of an heir of a measure was defined in \cite[Remark 2.7]{NIP1},
but working with that definition does not seem sufficient for our
purposes.
\end{enumerate}
\end{rem}
Given $p_0, \ldots, p_{n-1} \in S(A)$ and $\phi(x,a) \in L(A)$, we set $\Av \left( p_0, \ldots, p_{n-1}; \phi(x,a) \right) = \frac{\left|\left\{ i<n:\phi\left(x,a\right)\in p_{i}\right\} \right|}{n}$. Then one defines  $\mu = \Av(p_0, \ldots, p_{n-1}) \in \mes(A)$, the average measure of $p_0, \ldots, p_{n-1}$, by setting $\mu \left( \phi(x,a) \right) = \Av \left( p_0, \ldots, p_{n-1}; \phi(x,a) \right)$ for all $\phi(x,a) \in L(A)$.

The following is a corollary of the VC-theorem from \cite[Section 4]{NIP2}.
It is stated there for a single formula, but easily generalizes to
a finite set of formulas by encoding them into one.
\begin{fact}
\label{fac: measure is average of types}Let $\mu$
a measure on $S\left(A\right)$, $\Delta\left(x\right)=\left\{ \phi_{i}\left(x,y_{i}\right)\right\} _{i<m}$
a finite set of $L$-formulas, and let $\varepsilon>0$ be arbitrary.
Then there are some types $p_{0},\ldots,p_{n-1}\in S\left(A\right)$
such that for every $a\in A$ and $\phi\left(x,y\right)\in\Delta$,
we have 
\[
\left|\mu\left(\phi\left(x,a\right)\right)-\Av\left(p_{0},\ldots,p_{n-1};\phi\left(x,a\right)\right)\right|\leq\varepsilon\mbox{.}
\]
Furthermore, we may assume that $p_{i}\in S\left(\mu\right)$, the
support of $\mu$, for all $i<n$.\end{fact}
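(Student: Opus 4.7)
The plan is to reduce the statement to the single-formula version of the VC-approximation established in \cite[Section 4]{NIP2} via a simple encoding trick that packages the finite set $\Delta$ into one $L$-formula with an added selector variable.

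First, I would combine the $m$ formulas into a single $L$-formula as follows. Fix distinct parameters $c_{0},\ldots,c_{m-1}$ (for instance, in $M$ if it is infinite, or added as constants to the language, which does not affect $\NIP$ or change $\mu$). Define
\[
\phi\bigl(x,y_{0},\ldots,y_{m-1},z\bigr) \;\equiv\; \bigvee_{i<m}\bigl(z=c_{i}\,\wedge\,\phi_{i}(x,y_{i})\bigr).
\]
Since $\NIP$ is preserved under Boolean combinations, $\phi$ is $\NIP$. The key property is that for any $a_{0},\ldots,a_{m-1}\in A$ and any $i<m$, the instance $\phi(x,a_{0},\ldots,a_{m-1},c_{i})$ is $T$-equivalent to $\phi_{i}(x,a_{i})$, so every instance of every $\phi_{i}\in\Delta$ is recovered as an instance of $\phi$.

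Second, I would apply the single-formula VC-approximation from \cite[Section 4]{NIP2} to $\mu$, the combined formula $\phi$, and the tolerance $\varepsilon$. This produces $n$ and types $p_{0},\ldots,p_{n-1}\in S(\mu)$ such that for every parameter tuple $\bar b$,
\[
\bigl|\mu\bigl(\phi(x,\bar b)\bigr) - \Av\bigl(p_{0},\ldots,p_{n-1};\phi(x,\bar b)\bigr)\bigr| \leq \varepsilon.
\]
Specializing $\bar b=(a_{0},\ldots,a_{m-1},c_{i})$ for each $i<m$ and each $a_{i}\in A$ then yields the desired uniform estimate
\[
\bigl|\mu\bigl(\phi_{i}(x,a_{i})\bigr) - \Av\bigl(p_{0},\ldots,p_{n-1};\phi_{i}(x,a_{i})\bigr)\bigr| \leq \varepsilon
\]
for every $a_{i}\in A$ and every $\phi_{i}\in\Delta$, with the sample drawn from $S(\mu)$ as required.

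The substantive content—the VC-theorem producing, via a probabilistic argument, an approximating sample of types lying in the support of $\mu$—is entirely handled in the cited section of \cite{NIP2}; so the only issue to check in this reduction is that the encoding step is legitimate. This amounts to verifying that the auxiliary constants $c_{i}$ can be introduced harmlessly and that $\phi$ retains $\NIP$, both of which are immediate. Thus there is no real obstacle beyond presenting the encoding cleanly.
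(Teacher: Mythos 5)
Your approach is exactly the one the paper intends: the paper explicitly states that the single-formula VC-theorem ``easily generalizes to a finite set of formulas by encoding them into one,'' and your selector-variable coding is the standard way to carry this out. The only minor point worth smoothing is the handling of the selector parameters $c_{0},\ldots,c_{m-1}$: rather than introducing new constants (which would require extending $\mu$ from $A$ to a larger parameter set and then restricting the resulting types back to $A$), it is cleaner to take the $c_{i}$ as distinct elements of $A$ itself, using an encoding such as $\phi(x,y_{0},\ldots,y_{m-1},z,w_{0},\ldots,w_{m-1})=\bigvee_{i<m}(z=w_{i}\wedge\phi_{i}(x,y_{i}))$ and specializing $w_{i}\mapsto c_{i}$, $z\mapsto c_{j}$; when $A$ has fewer than $m$ elements there are only finitely many instances $\phi_{i}(x,a)$ and the statement is trivial. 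With that cosmetic fix the argument is complete and matches the paper's.
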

\begin{thm}\label{thm: invariant heir of a measure exists}
Let $M$ be a model of $T$. Then every $\mu\in\mes\left(M\right)$
has a global extension $\nu\in\mes\left(\M\right)$ which is both
invariant over $M$ and an heir of $\mu$.\end{thm}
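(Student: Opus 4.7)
The plan is to reduce to the known existence of invariant heirs for types (Fact \ref{fac: forking for types in NIP}(2)) via the VC-style approximation of measures by averages of types (Fact \ref{fac: measure is average of types}), lift those approximations to global invariant measures whose heir property holds only approximately, and then extract a genuine invariant heir of $\mu$ as a cluster point in the compact space $\mes(\M)$.

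Fix a finite $\Delta \subseteq L$ and $\varepsilon > 0$. By Fact \ref{fac: measure is average of types} pick $p_0, \dots, p_{n-1} \in S(\mu)$ with $|\mu(\phi(x,a)) - \Av(p_0, \dots, p_{n-1}; \phi(x,a))| \le \varepsilon$ for every $\phi \in \Delta$ and $a \in M$. Realising $a_i \models p_i$ in disjoint variables in a larger model, let $\tilde p (x_0, \dots, x_{n-1}) = \tp(a_0, \dots, a_{n-1}/M)$; this is a complete type over $M$ whose marginal in $x_i$ is $p_i$. By Fact \ref{fac: forking for types in NIP}(2) extend $\tilde p$ to a global $M$-invariant heir $\tilde Q$, set $q_i = \tilde Q \restriction x_i$ and $\nu_{\Delta,\varepsilon} = \Av(q_0,\dots,q_{n-1}) \in \mes(\M)$. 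Then $\nu_{\Delta,\varepsilon}$ is $M$-invariant as a convex combination of invariant types, and since $q_i \restriction M = p_i$, its restriction to $L(M)$ equals $\Av(p_0,\dots,p_{n-1})$ and so agrees with $\mu$ up to $\varepsilon$ on every $\Delta$-formula with parameters in $M$.

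The crucial point is that $\nu_{\Delta,\varepsilon}$ inherits from $\tilde Q$ a \emph{joint} approximate heir property: given $\psi_0, \dots, \psi_m \in \Delta$, a common parameter $a \in \M$ and $r_j$ with $\nu_{\Delta, \varepsilon}(\psi_j(x,a)) > r_j$ for $j \le m$, the sets $I_j = \{ i < n : \psi_j(x,a) \in q_i \}$ satisfy $|I_j|/n > r_j$, and the conjunction $\bigwedge_{j \le m} \bigwedge_{i \in I_j} \psi_j(x_i, a)$ lies in $\tilde Q$; the heir property of $\tilde Q$ over $\tilde p$ then yields a common $b \in M$ satisfying the same conjunction in $\tilde p$, so $\Av(p_0, \dots, p_{n-1}; \psi_j(x,b)) \geq |I_j|/n > r_j$ and hence $\mu(\psi_j(x,b)) > r_j - \varepsilon$ for every $j$.

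Now apply compactness of $\mes(\M) \subseteq [0,1]^{L(\M)}$ to extract a cluster point $\nu$ of the net $(\nu_{\Delta,\varepsilon})$ directed by $\Delta$ growing and $\varepsilon \to 0$. The identity $\nu \restriction L(M) = \mu$ and the $M$-invariance of $\nu$ follow by passing to the limit in the corresponding closed conditions. For the heir property, given $\psi_0, \dots, \psi_m \in L$, $a \in \M$ and $r_j$ with $\nu(\psi_j(x,a)) > r_j$, pick $\delta > 0$ with $\nu(\psi_j(x,a)) > r_j + 2\delta$ for all $j$; then any sufficiently fine $(\Delta,\varepsilon)$ with $\Delta \supseteq \{\psi_0, \dots, \psi_m\}$, $\varepsilon < \delta$, and $\nu_{\Delta,\varepsilon}$ close to $\nu$ at these finitely many formulas delivers $b \in M$ with $\mu(\psi_j(x,b)) > r_j + \delta - \varepsilon > r_j$, completing the proof. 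The main obstacle is precisely this joint heir step: independently chosen invariant heirs of the individual $p_i$ need not admit a common witness $b \in M$ for parameters $a \in \M$, and completing $\bigcup_i p_i(x_i)$ to a single type $\tilde p$ over $M$ before applying Fact \ref{fac: forking for types in NIP}(2) is what supplies the required simultaneity.
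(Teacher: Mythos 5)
Your proof is correct and takes essentially the same approach as the paper's: both use the VC-theorem approximation of $\mu$ by averages of types in its support, complete the $p_i$'s to a single $n$-type over $M$ so that the invariant heir for types produces a joint heir (the exact point you flag as the crux), average the marginals to get an approximate invariant heir $\nu_{\Delta,\varepsilon}$, and finish by compactness (the paper phrases this via the finite intersection property for the closed sets $H_{\mu,\Delta,\varepsilon}\cap I$, you via a cluster point of the net, which is the same thing). The only cosmetic difference is that you deduce invariance of $\nu_{\Delta,\varepsilon}$ directly from each $q_i$ being $M$-invariant, whereas the paper argues via non-forking of the average — yours is marginally cleaner, but both are fine.
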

\begin{proof}
Let $\mu\in\mes\left(M\right)$ and $\varepsilon \in [0,1)$ be given, and let $\Delta$ be a finite
set of $L(M)$-formulas. Let $H_{\mu,\Delta,\varepsilon}$ be the set of global $\Delta$-heirs
of $\mu$ up to an $\varepsilon$-mistake and let $I$ is the set of $M$-invariant
global measures:
\begin{align*}
   I =  \left\{ \nu \in \mes(\M) :  \nu\left(\phi\left(x,a\right)\right)=\nu\left(\phi\left(x,b\right)\right) \mbox{ for all } a\equiv_{M}b\in\M,\phi\left(x,y\right)\in L\left(M\right)\right\} \mbox{,}
\end{align*}
\begin{align*}
    H_{\mu,\Delta,\varepsilon} = \left\{ \nu \in \mes(\M) :  \bigvee_{\phi\in\Delta} \left( \nu\left(\phi\left(x,a\right)\right)\leq r_{\phi} \right) \mbox{ for all } a \in \M \mbox{ and all } (r_{\phi})_{\phi \in \Delta} \in \left[0,1\right)^{|\Delta|} \right.\\
    \left. \mbox{ such that } \bigwedge_{\phi \in \Delta} \left( \mu(\phi(x,b)) > r_{\phi}-\varepsilon \right) \mbox{ does not hold for any }b \in M \right\} \mbox{.}
\end{align*}


Let also $H_{\mu}$ be the set of global heirs of
$\mu$. Note that all these sets are closed in $\mes\left(\M\right)$ and 
that $H_{\mu}=\bigcap_{\Delta\subseteq L(M)\mbox{ finite},n\in\omega}H_{\mu,\Delta,\frac{1}{n}}$ (if $\nu$ belongs to the set on the right and $\bigwedge_{i<n} \nu(\phi_i(x,a)) > r_i$, then there is some $\varepsilon > 0$ such that $\bigwedge_{i<n} \left( \nu(\phi_i(x,a)) > r_i + \varepsilon \right)$, and since $\nu \in H_{\mu, \{\phi_0, \ldots, \phi_{n-1}\}, \varepsilon}$ we find some $b \in M$ satisfying $\bigwedge_{i<n} \mu(\phi_i(x,b)) > r_i$).

Fix an arbitrary $\varepsilon>0$ and finite $\Delta\left(x\right)\subseteq L(M)$.
By Fact \ref{fac: measure is average of types} there are some $p_{0},\ldots,p_{n-1}\in S\left(M\right)$
such that $\left|\mu\left(\phi\left(x,a\right)\right)-\Av\left(p_{0},\ldots,p_{n-1};\phi\left(x,a\right)\right)\right|\leq\varepsilon$
for all $a\in M$ and $\phi\left(x,y\right)\in\Delta$. Let $p\left(x_{0},\ldots,x_{n-1}\right)\in S_{n}\left(M\right)$
be some completion of $p_{0}\left(x_{0}\right)\cup\ldots\cup p_{n-1}\left(x_{n-1}\right)$,
then by Fact \ref{fac: forking for types in NIP}\ref{fac: forking for types in NIP 2} there is some
$q\left(x_{0},\ldots,x_{n-1}\right)\in S_{n}\left(\M\right)$ ---
a global $M$-invariant heir of $p\left(x_{0},\ldots,x_{n-1}\right)$.
Let $q_{i}=q\restriction x_{i}\in S\left(\M\right)$ for $i<n$, and
let $\nu_{\varepsilon,\Delta}=\Av\left(q_{0},\ldots,q_{n-1}\right)\in\mes\left(\M\right)$.
\begin{claim*}
$\nu_{\varepsilon,\Delta}\in I$, i.e. it is invariant over $M$.\end{claim*}
\begin{proof}
By $\NIP$ it is enough to show that $\nu$ does not fork over $M$. If
it does then $\nu\left(\phi\left(x,a\right)\right)>0$ for some $\phi\left(x,a\right)$
forking over $M$, which by the definition of $\nu_{\varepsilon,\Delta}$
implies that $\phi\left(x,a\right)\in q_{i}$ for some $i<n$ ---
a contradiction.\end{proof}
\begin{claim*}
$\nu_{\varepsilon,\Delta}\in H_{\mu,\Delta,\varepsilon}$.\end{claim*}
\begin{proof}
Assume that $\bigwedge_{\phi\in\Delta}\left(\nu_{\varepsilon,\Delta}\left(\phi\left(x,a\right)\right)>r_{\phi}\right)$ holds
for some $a\in\M$ and $r_{\phi}\in\left[0,1\right), \phi \in \Delta$. Then $\bigwedge_{\phi\in\Delta}\left(\frac{\left|\left\{ i<n:\phi\left(x,a\right)\in q_{i}\right\} \right|}{n}>r_{\phi}\right)$,
that is $\bigwedge_{\phi\in\Delta}\left(\frac{\left|\left\{ i<n:\phi\left(x_{i},a\right)\in q\right\} \right|}{n}>r_{\phi}\right)$.
As $q$ is an heir over $M$, there is some $b\in M$
satisfying $\bigwedge_{\phi\in\Delta}\left(\frac{\left|\left\{ i<n:\phi\left(x_{i},b\right)\in p\right\} \right|}{n}>r_{\phi}\right)$,
so $\bigwedge_{\phi\in\Delta}\left(\frac{\left|\left\{ i<n:\phi\left(x,b\right)\in p_{i}\right\} \right|}{n}>r_{\phi}\right)$.
But then by the choice of $p_{i}$'s it follows that $\mu\left(\phi\left(x,b\right)\right)>r_{\phi}-\varepsilon$
for every $\phi\in\Delta$, as wanted.
\end{proof}
Finally, assume towards a contradiction that $H_{\mu}\cap I$ is empty.
But then it follows by compactness of $\mes\left(\M\right)$ that
$H_{\mu,\Delta,\frac{1}{n}}\cap I$ is empty for some finite $\Delta\subseteq L(M)$
and $n\in\omega$. However, $\nu_{\frac{1}{n},\Delta}\in H_{\mu,\Delta,\frac{1}{n}}\cap I$
by the previous claims.\end{proof}

\begin{example}
Every $M$-definable measure $\mu \in \mes(\M)$ is an invariant heir over $M$.
\end{example}

\subsection{Definability of types implies definability of measures}
We give another application of Fact \ref{fac: measure is average of types} and show that if all types over a model are definable, then measures over it are definable as well.

\begin{thm}
\label{thm: Definability of types implies definabilty of measures}
Assume that $M\models T$ and that all types over $M$
are definable. Then:
\begin{enumerate}
\item Every Borel probability measure on $S\left(M\right)$
is definable.
\item In particular, if $G$ is a definably amenable $M$-definable group,
then it is witnessed by an $M$-definable measure.
\end{enumerate}
\end{thm}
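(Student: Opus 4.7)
The plan is to prove (1) directly using Fact \ref{fac: measure is average of types} combined with the hypothesis that every type over $M$ is definable, and then deduce (2) as an immediate corollary.

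Fix a formula $\phi(x,y) \in L$ and closed disjoint $C_{1}, C_{2} \subseteq [0,1]$. Since $[0,1]$ is a compact metric space and $C_{1}, C_{2}$ are closed disjoint, their distance $3\delta := d(C_{1}, C_{2})$ is positive. Apply Fact \ref{fac: measure is average of types} to $\mu \in \mes(M)$, the singleton $\Delta = \{\phi(x,y)\}$, and $\varepsilon = \delta$, to obtain types $p_{0}, \ldots, p_{n-1} \in S(M)$ satisfying
\[
\left| \mu(\phi(x,b)) - \frac{|\{i<n : \phi(x,b) \in p_{i}\}|}{n} \right| \leq \delta \quad \text{for every } b \in M.
\]
By hypothesis each $p_{i}$ is definable, so there is a formula $d_{i}(y) \in L(M)$ such that $\phi(x,b) \in p_{i}$ iff $M \models d_{i}(b)$.

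For each $k \in \{0, 1, \ldots, n\}$, let $A_{k} = \{b \in M : \text{exactly } k \text{ of the formulas } d_{0}(b), \ldots, d_{n-1}(b) \text{ hold}\}$, which is definable over $M$. Let $U_{j} = \{r \in [0,1] : d(r, C_{j}) \leq \delta\}$ for $j = 1, 2$; by choice of $\delta$, we have $U_{1} \cap U_{2} = \emptyset$. Define
\[
D = \bigcup \{ A_{k} : k/n \in U_{1} \},
\]
a definable subset of $M$. If $b \in M$ satisfies $\mu(\phi(x,b)) \in C_{1}$, then the unique $k$ with $b \in A_{k}$ satisfies $|k/n - \mu(\phi(x,b))| \leq \delta$, hence $k/n \in U_{1}$, so $b \in D$. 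Symmetrically, if $\mu(\phi(x,b)) \in C_{2}$ then $k/n \in U_{2}$, so $b \notin D$. Thus $D$ separates $\{b : \mu(\phi(x,b)) \in C_{1}\}$ from $\{b : \mu(\phi(x,b)) \in C_{2}\}$, which proves definability of $\mu$.

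For part (2), if $G$ is definably amenable there is a $G(M)$-invariant Borel probability measure $\mu$ on $S_{G}(M)$, which we view naturally as a Borel probability measure on $S(M)$ (concentrated on $G$). By (1) it is definable over $M$, as required. The only subtle point is the reduction to a finite $\Delta$ in Fact \ref{fac: measure is average of types}; this is what forces us to fix $\phi$ and argue one formula at a time, but since definability of $\mu$ is itself stated formula-by-formula this causes no issue.
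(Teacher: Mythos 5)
Your proof is correct and follows essentially the same route as the paper: both apply Fact \ref{fac: measure is average of types} for the single formula $\phi$ with an accuracy parameter small relative to $d(C_1, C_2)$, then use definability of the $p_i$'s to observe that the level sets of $\Av(p_0,\ldots,p_{n-1};\phi(x,\cdot))$ are $M$-definable, and take the union of those level sets whose value is close to $C_1$. Your explicit partition into sets $A_k$ is a cosmetic elaboration of the paper's definition of $D$; there is no substantive difference.
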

\begin{proof}
We are assuming that all types over $M$ are definable, and let
$\mu$ be a measure on $S\left(M\right)$. We want to show that $\mu$
is definable. Let $\phi\left(x,y\right)$ and $C_{1},C_{2}$ closed
disjoint subsets of $\left[0,1\right]$ be given. It then follows
that there is some $\varepsilon>0$ such that no point of $C_{1}$ has
any point of $C_{2}$ in its $\varepsilon$-neighbourhood. Let $D_{i}=\left\{ b\in M:\mu\left(\phi\left(x,b\right)\right)\in C_{i}\right\} $
for $i\in\left\{ 0,1\right\} $.

Let $p_{0},\ldots,p_{n-1}\in S\left(M\right)$ by as given by Fact
\ref{fac: measure is average of types} for $\phi$, $\mu$ and $\frac{\varepsilon}{2}$.
As each of $p_{i}$'s is definable, there is some $d_{\phi}p_{i}\left(y\right)\in L\left(M\right)$
such that for any $a\in M$ we have $\phi\left(x,a\right)\in p_{i}\Leftrightarrow M\models d_{\phi}p_{i}\left(a\right)$.
Note that $\Av\left(p_{0},\ldots,p_{n-1};\phi\left(x,a\right)\right)$
can only take values from the finite set $\left\{ \frac{m}{n}:m<n\right\} $,
and let $C$ be the set of those values whose distance from $C_{1}$
is less that $\frac{\varepsilon}{2}$. Let $D=\left\{ a\in M:\Av\left(p_{0},\ldots,p_{n-1};\phi\left(x,a\right)\right)\in C\right\} $,
it is definable by some boolean combination of $d_{\phi}p_{i}\left(y\right)$'s,
so $L\left(M\right)$-definable. It is then easy to see from the definition
that $D_{1}\subseteq D$ and that $D\cap D_{2}=\emptyset$, as wanted.
\end{proof}

\section{Lifting measures to Shelah's expansion and preservation of amenability} \label{sec: Lifting measures to Shelah's expansion and preservation of amenability}

\subsection{Definable amenability and $f$-generic types}\label{sec: def amenability and f-generics}

\begin{defn}
A definable group $G$ is \emph{definably amenable}
if there is a left-invariant finitely additive probability measure defined on the algebra of all definable subsets of $G$. 
\end{defn}
First we summarize some known facts about definably amenable groups in $\NIP$
theories which will be used freely later on in the text.

\begin{defn}
A global type $p \in S_G(\M)$ is \emph{left $f$-generic} over a small model $M$ if $g \cdot p$ does not fork over $M$ for all $g \in G$ (equivalently, $g \cdot p$ is invariant over $M$ for all $g \in G$).
\end{defn}

\begin{fact}\label{fac: properties of def am groups}
\begin{enumerate}
\item \cite[5.10,5.11]{NIP2} $G$ is definably amenable if and only if for some (equivalently,
any) small model $M$, there is a global type $p \in S(\M)$ which is left $f$-generic over $M$.
\item \cite[Section 5]{NIP1} Definable amenability is a property of the theory: If $S_G(M)$ admits a $G$-invariant measure and $M \equiv N$, then $S_G(N)$ admits a $G$-invariant measure.
\item \cite[5.6(i)]{NIP2}  If $p\in S\left(\M\right)$ is $f$-generic then $\Stab\left(p\right)=G^{00}=G^{\infty}$, where $\Stab(p) = \{ g \in G : g \cdot p = p \}$.
\end{enumerate}
\end{fact}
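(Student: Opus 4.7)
The plan is to handle the three parts with related but distinct arguments, building on the invariant-heir extension theorem already established. I would begin with (3), then exploit it for the backward direction of (1), and give a separate combinatorial argument for (2).

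For (3), I would establish $G^{\infty}\subseteq\Stab(p)\subseteq G^{00}$ and then $G^{00}\subseteq G^{\infty}$. Since $p$ is $f$-generic over $M$, Fact~\ref{fac: forking for types in NIP} tells me that $g\cdot p$ is $M$-invariant for every $g\in G$; in particular $p$ itself is $M$-invariant, so $\Stab(p)$ is an $M$-invariant subgroup of $G$. The orbit $\{g\cdot p:g\in G\}$ sits inside $S^{\inv}(\M,M)$, whose cardinality is bounded by $2^{|T|}$ in NIP, so $\Stab(p)$ has bounded index and hence contains $G^{\infty}$ by Fact~\ref{fac: existence of connected components}. For $\Stab(p)\subseteq G^{00}$, my plan is to put a compact Hausdorff ``logic topology'' on the orbit $G/\Stab(p)$ (using type-definability over $M$ together with the bounded orbit) and show that the resulting continuous group homomorphism $G\to G/\Stab(p)$ is then forced to factor through $G/G^{00}$ because $G^{00}$ is the smallest type-definable bounded-index subgroup. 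Finally, the collapse $G^{00}=G^{\infty}$ in this definably amenable setting is the content of \cite{Sh886} / \cite{Jakub} specialized to $f$-generics and is deduced by invoking Theorem~\ref{thm: invariant heir of a measure exists} to produce sufficiently many invariant heir extensions that witness type-definability of $G^{\infty}$.

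For the forward direction of (1), given a $G(M)$-invariant $\mu_{0}\in\mes(M)$, I would apply Theorem~\ref{thm: invariant heir of a measure exists} to produce a global $M$-invariant heir $\mu\in\mes(\M)$. The subset of $\mes(\M)$ consisting of $M$-invariant heirs of $\mu_{0}$ is closed, nonempty, and $G(M)$-stable under left translation; moreover by the heir property every left $G$-translate of such a $\mu$ restricts to an invariant extension of $\mu_{0}$ on $L(M)$-instances, so an averaging/fixed-point argument in this compact convex set yields a $\mu$ that is additionally $G$-invariant. Then any $p\in S(\mu)$ is $M$-invariant, and for each $g\in G$ the type $g\cdot p\in S(g\cdot\mu)=S(\mu)\subseteq S^{\inv}(\M,M)$, so $p$ is $f$-generic. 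For the backward direction, assume $p$ is $f$-generic and invoke (3) to get $\Stab(p)\supseteq G^{00}$; then $g\cdot p$ depends only on $gG^{00}$, so pulling back the Haar probability measure $h$ on the compact group $G/G^{00}$ via $\mu(\phi):=h\bigl(\{gG^{00}:\phi\in g^{-1}\cdot p\}\bigr)$ produces a $G$-invariant Keisler measure on definable subsets of $G$, witnessing definable amenability.

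For (2), the cleanest route is via a Tarski-style paradoxical decomposition theorem. A $G$-invariant finitely additive probability measure on the Boolean algebra of definable subsets of $G$ exists if and only if there is no ``definable paradoxical decomposition'', i.e., no disjoint definable $X_{1},\ldots,X_{n}$ and elements $g_{i},h_{j}\in G$ making $\bigsqcup g_{i}X_{i}=G$ and $\bigsqcup h_{j}X_{j}=G$ with suitably incompatible multiplicities. For each fixed combinatorial pattern, the non-existence of such a configuration is a first-order sentence in the language of $G$, so definable amenability is axiomatized by a theory and hence preserved under elementary equivalence.

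The hard part will be the $\Stab(p)\subseteq G^{00}$ inclusion together with $G^{00}=G^{\infty}$ in (3): these require genuinely NIP-specific tools, since the paper explicitly notes that $G^{00}\neq G^{\infty}$ occurs in general, and the argument must combine boundedness of the orbit of $p$ with type-definability of $G^{00}$ in a way that only works under NIP. The averaging step needed in (1) to obtain a single $G$-invariant heir (rather than merely an $M$-invariant one) is the other delicate point, but here the convex compactness of the space of invariant heirs, together with Theorem~\ref{thm: invariant heir of a measure exists}, should close the argument.
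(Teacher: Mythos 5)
This statement is cited as a \emph{Fact} from \cite{NIP1} and \cite{NIP2} and is not proved in the paper, so there is no internal proof to compare against; I will assess the proposal on its own merits.

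The proposal for part (3) contains a genuine logical error. You claim to prove $\Stab(p)\subseteq G^{00}$ by showing that the homomorphism $G\to G/\Stab(p)$ factors through $G/G^{00}$. But a homomorphism factoring through $G/G^{00}$ means its kernel contains $G^{00}$, i.e.\ it proves $G^{00}\subseteq\Stab(p)$ --- the \emph{opposite} inclusion. Moreover, the Hausdorff logic topology on $G/\Stab(p)$ that this argument depends on requires $\Stab(p)$ to be \emph{type-definable}, and an $f$-generic $p$ is a priori only $M$-invariant, so $\Stab(p)$ is a priori only $\Aut(\M/M)$-invariant (this is exactly why $G^\infty\subseteq\Stab(p)$ is easy, while the other containments need work). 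You would have to establish type-definability of $\Stab(p)$ separately, which is nontrivial precisely because $p$ need not be definable. Finally, the attribution of $G^{00}=G^\infty$ to \cite{Sh886} and \cite{Jakub} is wrong: those references establish the \emph{absoluteness} $G^\infty_A=G^\infty_\emptyset$, not the coincidence $G^{00}=G^\infty$. Indeed the paper notes explicitly, citing \cite{AnnalisaAnand}, that $G^{00}\supsetneq G^\infty$ can occur in NIP; the collapse happens only under the $f$-generic/definable-amenability hypothesis, which is precisely the substance of \cite[5.6]{NIP2}. The remark that Theorem~\ref{thm: invariant heir of a measure exists} should ``witness type-definability of $G^\infty$'' does not make sense, as $G^\infty$ is not type-definable in general and that theorem plays no such role.

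For part (1) the forward direction works, but the proposed averaging step to promote an $M$-invariant heir to a $G$-invariant one is both unjustified as written and unnecessary: Proposition~\ref{prop: heir remains G-invariant} already shows that any heir (in the sense of Definition~\ref{def: heir of a measure}) of a $G(M)$-invariant measure is automatically $G(\M)$-invariant, with no fixed-point argument required. In the backward direction, the pull-back of Haar measure from $G/G^{00}$ is the right idea, but you cannot simply write $\mu(\phi):=h(\{gG^{00}:\phi\in g^{-1}\cdot p\})$ without justifying that the set $\{gG^{00}:\phi\in g^{-1}\cdot p\}$ is Borel in $G/G^{00}$; since $p$ is only invariant, $\{g\in G:\phi\in g^{-1}\cdot p\}$ need not be definable, only a union of $G^{00}$-cosets, and the required Borel-measurability is a real lemma in \cite{NIP2}. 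Part (2) via a Tarski-style paradoxical decomposition criterion for the Boolean algebra of definable sets is a reasonable route, though you should at least note that the Tarski alternative for invariant Boolean subalgebras (rather than the full power set) is what is being invoked.
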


Next we consider extending $G$-invariant ($M$-invariant) measures to larger sets of parameters.
\begin{prop}\label{prop: heir remains G-invariant}
Assume that $\mu\in\mes\left(M\right)$ is $G\left(M\right)$-invariant
and $\nu\in\mes\left(\M\right)$ is an heir of $\mu$ (in the sense of Definition \ref{def: heir of a measure}). Then $\nu$
is $G\left(\M\right)$-invariant.\end{prop}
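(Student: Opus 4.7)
The plan is to reduce global $G(\M)$-invariance of $\nu$ to $G(M)$-invariance of $\mu$ via the heir property, by bundling the translating element together with the parameters of the tested formula.

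First I would unpack what has to be shown: for any $L$-formula $\phi(x,y)$, any $a\in\M$, and any $g\in G(\M)$, one needs $\nu\bigl(\phi(x,a)\bigr)=\nu\bigl(\phi(gx,a)\bigr)$, where $\phi(gx,a)$ is the $L(\M)$-formula defining the left-translate of $\phi(x,a)$ by $g$ under the definable group operation. Suppose for contradiction that this fails; then I can pick rationals $r,s\in[0,1)$ with $r>s$ such that either $\nu(\phi(x,a))>r$ and $\nu(\phi(gx,a))<s$, or the symmetric inequality holds. The two cases are handled identically, so I treat the first.

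Next, bundle the parameters into a single tuple $w=(a,g)$ and consider the two $L(\M)$-formulas in the variable $x$ with parameter $w$:
\[
\chi_{1}(x,w):=\phi(x,a),\qquad \chi_{2}(x,w):=\neg\phi(gx,a).
\]
By assumption, $\nu(\chi_{1}(x,w))>r$ and $\nu(\chi_{2}(x,w))>1-s$. Since $\nu$ is an heir of $\mu$ in the sense of Definition \ref{def: heir of a measure}, there exists a tuple $w'=(b,h)\in M$ of the same sort as $w$, with $h\in G(M)$, such that $\mu(\chi_{1}(x,w'))>r$ and $\mu(\chi_{2}(x,w'))>1-s$. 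Rewriting, $\mu(\phi(x,b))>r$ while $\mu(\phi(hx,b))<s$.

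Finally, apply $G(M)$-invariance of $\mu$: since $h\in G(M)$, we have $\mu(\phi(hx,b))=\mu(\phi(x,b))$, which contradicts $r>s$. The same argument with roles reversed (replacing $\phi$ by $\neg\phi$) handles the other inequality. I do not expect any real obstacle here: once one observes that the heir property can absorb the translating element $g$ into the parameter tuple, the rest is bookkeeping. The only thing to verify carefully is that the formulas $\chi_{1},\chi_{2}$ are genuinely $L$-formulas in $x$ with parameters in $\M$, which follows from the fact that $G$ and its multiplication are $\emptyset$-definable (or at worst $M$-definable, in which case the extra parameters from $M$ cause no trouble).
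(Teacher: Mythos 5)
Your approach is essentially the same as the paper's — reduce global $G(\M)$-invariance of $\nu$ to $G(M)$-invariance of $\mu$ by bundling the translating element $g$ into the parameter tuple and applying the heir property — but there is a genuine gap at the step where you assert that the witness $h$ supplied by the heir condition lies in $G(M)$. Definition \ref{def: heir of a measure} only guarantees \emph{some} tuple $(b,h)\in M$ satisfying the measure inequalities; nothing in that definition says the $h$-coordinate is a group element. And if $h\notin G(M)$ the argument breaks: since $\mu$ lives on $S_G(M)$ and the translate $hx$ is undefined for $h\notin G$, the formula $\chi_2(x,(b,h))=\neg\phi(hx,b)$ degenerates to a formula that is trivially true on $G$, so $\mu(\chi_2(x,(b,h)))=1>1-s$ holds vacuously and no contradiction with $G(M)$-invariance of $\mu$ is obtained. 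The paper closes this by applying the heir condition to \emph{three} formulas rather than two: the third is the "constant" formula $g\in G$, viewed as a formula in $x$ with parameter $g$ whose truth value is independent of $x$. Since $\nu(g\in G)=1>0$, the heir property then yields $(b,h)$ with $\mu(h\in G)>0$, which forces $h\in G(M)$; with that in hand the rest of your computation goes through exactly as you wrote it.
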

\begin{proof}
Assume not, then for some $\phi\left(x,a\right)\in L\left(\M\right)$ and $g\in G\left(\M\right)$ we have  $\nu\left(\phi\left(x,a\right)\right) = r_1, \nu\left(\phi\left(g^{-1} \cdot x,a\right)\right) = r_2$ and $r_1 \neq r_2$, say $r_1 > r_2$. Then, taking $r = (r_1+r_2)/2$, we have $\nu\left(\phi\left(x,a\right)\right)>r \land \nu\left(\neg\phi\left(g^{-1} \cdot x,a\right)\right)>1-r \land \nu( g \in G) >0$. As $\nu$ is an heir of $\mu$, this implies
that $\mu\left(\phi\left(x,b\right)\right)>r \land \mu\left(\neg\phi\left(h^{-1} \cdot x,b\right)\right)>1-r \land \mu (h \in G)>0$
for some $b\in M$ and $h\in M$, which implies that $h \in G(M)$ and contradicts $G\left(M\right)$-invariance
of $\mu$. 
\end{proof}

\begin{prop}\label{prop: extending to a G-inv M-inv measure}
If $M\models T$ and $\mu \in \mes(M)$ is $G(M)$-invariant,  then there is some $\nu \in \mes(\M)$ extending $\mu$, which is both $G(\M)$-invariant and $M$-invariant. \end{prop}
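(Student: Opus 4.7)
The plan is to observe that this proposition is an immediate combination of the two results just established. The key insight is that the notion of invariant heir has been crafted precisely so that these two properties --- being an invariant extension and preserving $G$-invariance under heir extensions --- can be achieved simultaneously by a single global measure.

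First, I would invoke Theorem \ref{thm: invariant heir of a measure exists} applied to $\mu \in \mes(M)$ to produce a global measure $\nu \in \mes(\M)$ that extends $\mu$, is $M$-invariant, and is an heir of $\mu$ in the sense of Definition \ref{def: heir of a measure}. This directly secures the extension and $M$-invariance clauses of the conclusion.

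Next, to upgrade to $G(\M)$-invariance, I would apply Proposition \ref{prop: heir remains G-invariant}: since $\mu$ is assumed $G(M)$-invariant and $\nu$ is an heir of $\mu$, that proposition yields that $\nu$ is $G(\M)$-invariant. Combining the two conclusions gives the desired $\nu$.

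Since the work is entirely built out of the two preceding results, there is no real obstacle here --- the proposition is essentially a formal consequence. If any subtlety arose it would be in ensuring that the notion of heir used in Theorem \ref{thm: invariant heir of a measure exists} is strong enough to trigger Proposition \ref{prop: heir remains G-invariant}; but both statements use Definition \ref{def: heir of a measure}, so the plug-in is direct.
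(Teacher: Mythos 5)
Your proof is correct and follows exactly the same route as the paper: apply Theorem \ref{thm: invariant heir of a measure exists} to get a global $M$-invariant heir $\nu$ of $\mu$, then Proposition \ref{prop: heir remains G-invariant} to conclude $\nu$ is $G(\M)$-invariant.
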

\begin{proof}
By Theorem \ref{thm: invariant heir of a measure exists}, $\mu$ admits a global $M$-invariant heir $\nu$. By Proposition  \ref{prop: heir remains G-invariant} $\nu$ is $G$-invariant.\end{proof}

Finally for this section, we characterize definable extreme amenability.
\begin{defn}
A definable group $G$ is \emph{definably extremely amenable}
if there is a $G$-invariant type $p \in S_G(\M)$.
\end{defn}
It is easy to see that definable extreme amenability is a property of the theory: by compactness, if there is a $G(M)$-invariant type in $S_G(M)$ and $M \equiv N$, then there is a $G(N)$-invariant type in $S_G(N)$.

\begin{prop}\label{prop: def ext am iff} An NIP group $G$ is definably extremely amenable if and only if it is definably
amenable and $G=G^{00}$. 
\end{prop}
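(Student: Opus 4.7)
The plan is to derive both directions directly from Fact \ref{fac: properties of def am groups}, in particular part (3) which identifies $\Stab(p) = G^{00}$ for any global $f$-generic $p$.

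For the forward direction, assume $G$ is definably extremely amenable and fix a $G$-invariant global type $p \in S_G(\M)$. Then $G$ is definably amenable (the Dirac measure concentrated on $p$ is left-invariant), and $p$ is in particular $f$-generic over any small model $M$, since every left translate $g \cdot p$ equals $p$ and hence is $M$-invariant. By Fact \ref{fac: properties of def am groups}(3) we then have $\Stab(p) = G^{00}$. But $G$-invariance of $p$ says $\Stab(p) = G$, so $G = G^{00}$.

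For the converse, assume $G$ is definably amenable and $G = G^{00}$. By Fact \ref{fac: properties of def am groups}(1), picking any small model $M$ over which $G$ is defined, there exists a global type $p \in S_G(\M)$ which is left $f$-generic over $M$. Applying Fact \ref{fac: properties of def am groups}(3) again, $\Stab(p) = G^{00} = G$, which is to say $g \cdot p = p$ for every $g \in G$. Thus $p$ is $G$-invariant, witnessing definable extreme amenability.

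Since both directions reduce to invoking Fact \ref{fac: properties of def am groups}(3), there is no real obstacle; the only point of care is making sure one can pass back and forth between ``$p$ is $G$-invariant'' and ``$p$ is $f$-generic with full stabilizer,'' but this is immediate from the definitions once the stabilizer identity is in hand. In the write-up I would state the argument in exactly the two short paragraphs above, without further preparation.
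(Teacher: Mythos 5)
Your backward direction (definably amenable together with $G=G^{00}$ implies definably extremely amenable) is correct and is exactly the paper's argument: take an $f$-generic $p$, use $\Stab(p)=G^{00}=G$ to conclude $p$ is $G$-invariant.

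The forward direction, however, has a genuine gap. Starting from a $G$-invariant global type $p$, you assert that ``$p$ is in particular $f$-generic over any small model $M$, since every left translate $g\cdot p$ equals $p$ and hence is $M$-invariant.'' The ``hence'' does not follow. $G$-invariance of $p$ says $gp=p$ for all $g\in G(\M)$, i.e.\ $\Stab(p)=G$; $M$-invariance of $p$ says $p$ is fixed under $\Aut(\M/M)$. These are two unrelated conditions. Since all translates of $p$ coincide with $p$, the statement ``$p$ is $f$-generic over $M$'' reduces precisely to ``$p$ does not fork over $M$'' (equivalently, in $\NIP$, $p$ is $\Aut(\M/M)$-invariant), and you have given no reason why a global type fixed by all $G$-translations should be non-forking over any particular small model. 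Without that, you are not entitled to invoke Fact \ref{fac: properties of def am groups}(3).

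The paper's proof sidesteps this exact difficulty: it restricts $p$ to a small model $M$ to get a $G(M)$-invariant type $p_0\in S_G(M)$, then uses Fact \ref{fac: forking for types in NIP}\ref{fac: forking for types in NIP 2} to take a global $M$-invariant \emph{heir} $p^*$ of $p_0$. Being a heir of a $G(M)$-invariant type, $p^*$ is $G(\M)$-invariant; being non-forking over $M$, it is $M$-invariant, hence $f$-generic. One then applies the stabilizer identity to $p^*$. Note that $p^*$ need not be your original $p$ at all. Your write-up should replace the one-line assertion by this (or an equivalent) heir argument.
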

\begin{proof}
If $G$ is definably amenable, then there is an $f$-generic $p$
such that $\Stab\left(p\right)=G^{00}=G$. But then $p$ is $G$-invariant,
so $G$ is definably extremely amenable.\\
Conversely, as $G$ is definably extremely amenable, for some small
model $M$ there is some $p\in S\left(M\right)$ which is $G\left(M\right)$-invariant.
Let $p^{*}\in S\left(\M\right)$ be a non-forking heir of $p$. It
follows that $p^{*}$ is $G\left(\M\right)$-invariant and $M$-invariant,
so in particular $f$-generic over $M$. But then $G^{00}=\Stab\left(p^{*}\right)=G$.
\end{proof}
\begin{rem}
In particular, if $T$ is stable then $G$ is definably extremely
amenable if and only if $G=G^{0}$. However, in the $\NIP$ case definable
amenability does not follow even from $G=G^{\infty}$. Indeed, given
a saturated real closed field $K$, $G=SL\left(2,K\right)$ is simple
as an abstract group modulo its finite center. Then $G=G^{\infty}$,
but this group is not definably amenable.\end{rem}

\subsection{\label{sec: Extracting the f.s. part}Extracting the finitely satisfiable
part of an invariant type}

We present a construction due to the third author from \cite{SimContraction}.

Let $M\models T$ and $N\succ M$ be $\left|M\right|^{+}$-saturated.
We let $M^{\ext}$ be a Shelah's expansion of $M$ in the language
$L'=\left\{ R_{\phi}\left(x\right):\phi\left(x\right)\in L\left(N\right)\right\} $
with $R_{\phi}\left(M\right)=M\cap\phi\left(x\right)$,
$T'=\Th_{L'}\left(M^{\ext}\right)$. Let $\left(N',M',\left(R_{\phi}\right)_{\phi\in L\left(N\right)}\right)$
be an $\left|N\right|^{+}$-saturated expansion of $\left(N,M,\left(R_{\phi}\right)_{\phi\in L\left(N\right)}\right)$
with a new predicate $\P\left(x\right)$ naming $M$. It follows that
$M'\succ^{L'}M$ and that still $R_{\phi}\left(M'\right)=M'\cap\phi\left(x\right)$.
We can identify $M'\restriction L$ with the monster model $\M$ of $T$. 
\begin{prop}
\label{prop: honest definitions}Working in $T'$, for every $L$-type
$p\in S^{\inv}_L\left(M',M\right)$ and $R_{\phi}(x)\in L'$,
if $p\left(x\right)\cup R_{\phi}\left(x\right)$ is
consistent then $p\left(x\right)\vdash R_{\phi}\left(x\right)$
(and in fact $p|_{M^{*}}\vdash R_{\phi}\left(x\right)$ for any $\left|N\right|^{+}$-saturated
$M\prec M^{*}\prec M'$).\end{prop}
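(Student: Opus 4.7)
The plan is to combine the honest definitions theorem for externally definable sets in NIP theories (Shelah, refined in \cite{ExtDefI,2012arXiv1202.2650C}) with the $M$-invariance of $p$. Write $\phi(x) = \psi(x, \bar c)$ with $\psi(x, y) \in L$ and $\bar c \in N$, and fix an $|N|^+$-saturated $M \prec M^* \prec M'$.

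Since $p$ is $L$-invariant over $M$, Fact~\ref{fac: forking for types in NIP}(\ref{fac: forking for types in NIP 2}) yields a global $M$-invariant $L$-extension $\widetilde p$ of $p$ living in an $L$-structure containing $N'$, and $\widetilde p$ decides $\psi(x, \bar c)$. By the symmetric form of the argument below applied to $\neg R_\phi$, only the alternative $\psi(x, \bar c) \in \widetilde p$ is compatible with consistency of $p \cup R_\phi$; the other alternative would give $p \vdash \neg R_\phi$, a contradiction. So assume $\psi(x, \bar c) \in \widetilde p$. Using $|N|^+$-saturation of $M^*$, realize $\bar c^* \in M^*$ with $\tp(\bar c^*/M) = \tp(\bar c/M)$; by $M$-invariance, $\psi(x, \bar c^*) \in p$, hence $\psi(x, \bar c^*) \in p|M^*$.

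Next, apply honest definitions to the external $L$-formula $\psi(x, \bar c)$ and the $|N|^+$-saturated model $M^*$: for every small $A \subseteq M^*$ there exists $\theta(x, \bar d) \in L(M^*)$ such that $\theta(M^*, \bar d) \subseteq R_\phi(M^*)$ and $\theta(\bar a, \bar d) \leftrightarrow R_\phi(\bar a)$ for all $\bar a \in A$. Choose $A$ inside $M^*$---for instance a sufficiently long $L$-indiscernible sequence over $M$ of realizations of $\tp(\bar c/M)$, exploiting the NIP alternation bound to ensure uniform agreement---so that $\theta$ is forced to coincide with $\psi(\cdot, \bar c^*)$ on a set large enough to put $\theta(x, \bar d) \in p|M^*$. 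Then for every realization $a$ of $p|M^*$ in an $L'$-elementary extension of $M'$, $\theta(a, \bar d)$ holds, and the $L'$-sentence $\forall x\,(\theta(x, \bar d) \to R_\phi(x))$ transfers from $M^*$ by $L'$-elementarity to yield $R_\phi(a)$. This gives $p|M^* \vdash R_\phi$ and hence $p \vdash R_\phi$.

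The main obstacle is, in the third paragraph, engineering the choice of $A$ so that the honest definition lands inside $p|M^*$; this is where NIP enters essentially (via the alternation bound on $L$-indiscernible sequences), and $|N|^+$-saturation of $M^*$ is what lets us simulate the external parameter $\bar c$ by internal witnesses $\bar c^*$ and $A$. The final lift from $M^*$ to the $L'$-extension where $a$ realizes $p|M^*$ is free of charge by $L'$-elementarity.
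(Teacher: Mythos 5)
The paper's proof is a one‑line citation: it observes that the statement follows directly from the proof of \cite[Proposition 1.1]{ExtDefI}, where it is shown that if $\phi(x)\wedge p(x)\wedge\P(x)$ is consistent then already $p_0(x)\wedge\P(x)\vdash\phi(x)$ for some small $p_0\subseteq p$. Your proposal tries to rederive this from scratch, and there are genuine gaps.

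First, $p\in S^{\inv}_L(M',M)$ is already a global type: $M'\restriction L$ is identified with the monster $\M$ of $T$, so there is no need (and no sense) in extending $p$ to a further "global" $L$-type $\widetilde p$ via Fact~\ref{fac: forking for types in NIP}. Moreover, $\bar c\in N$ need not lie in $M'$, so ``$\psi(x,\bar c)\in p$'' is not well‑formed; you would have to pass to the pair to even make the case distinction, and your justification of that distinction (``by the symmetric form of the argument below applied to $\neg R_\phi$'') is circular, since it presupposes the very implication being proved.

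Second, and more seriously, the pivotal step — arranging that the honest definition $\theta(x,\bar d)$ lands in $p|_{M^*}$ — is not carried out. Honest definitions produce, for any \emph{finite} $A\subseteq M^*$ (a set of elements in the $x$-sort), some $\theta(x,\bar d)\in L(M^*)$ with $\theta(M^*,\bar d)\subseteq R_\phi(M^*)$ and $\theta\restriction A = R_\phi\restriction A$; the $\theta$ depends on $A$, and there is no a priori reason it belongs to $p$. Your suggested choice of $A$ as ``an $L$-indiscernible sequence over $M$ of realizations of $\tp(\bar c/M)$'' does not even typecheck: those are tuples in the parameter variables $\bar y$, not elements in the $x$-variable where $A$ must live. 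What actually makes the paper's statement go through is the stronger content of \cite[Proposition 1.1]{ExtDefI}, which uses uniform honest definitions (via the $(p,q)$-theorem) together with the consistency of $p(x)\wedge\P(x)\wedge\phi(x)$ to conclude $p_0(x)\wedge\P(x)\vdash\phi(x)$ for a small $p_0\subseteq p$; this uniformity and the passage from ``consistent in $\P$'' to ``entailed over $\P$'' are exactly what your sketch leaves unestablished. So as it stands the proof has a real gap at the heart of the argument.
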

\begin{proof}
In the proof of the existence of honest definitions \cite[Proposition 1.1]{ExtDefI}, it is shown that if $\phi(x) \land p(x) \land \P(x)$ is consistent then $p_{0}\left(x\right)\land \P\left(x\right)\vdash\phi\left(x\right)$
for some small $p_{0}\subseteq p$, which translates to $p\left(x\right)\vdash R_{\phi\left(x\right)}\left(x\right)$ in view of the previous paragraph.
\end{proof}
Given $p\in S^{\inv}_L\left(M',M\right)$ we define $p'=\left\{ R_{\phi\left(x\right)}\left(x\right):p\vdash R_{\phi\left(x\right)}\left(x\right)\right\} $.
It is clearly a complete $L'$-type over $M^{\ext}$ and does not
depend on the choice of $N$ as it was only used to define the language.
Thus we can identify it with a global $L$-type $F_{M}\left(p\right)=\left\{ \phi\left(x\right)\in L\left(M'\right):\phi\left(M\right)=R_{\psi\left(x\right)}(M) \mbox{ for some } \psi(x) \in L(N) \mbox{ with } R_{\psi}(x) \in p'  \right\} $
finitely satisfiable in $M$.

Recall that given a global type $p\left(x\right)$ and a definable
function $f$, one defines $f_{*}\left(p\right)=\left\{ \phi\left(x\right):\phi\left(f\left(x\right)\right)\in p\right\} $.
If $p$ is $M$-invariant and $f$ is $M$-definable, then $f_{*}\left(p\right)$
is also $M$-invariant.
\begin{prop}
\label{prop: properties of F on types}The map $F_{M}$ satisfies
the following properties:
\begin{enumerate}
\item \label{prop: properties of F on types1} $F_{M}\left(p\right)|_{M}=p|_{M}$.
\item \label{prop: properties of F on types2} $F_{M}$ is a continuous retraction from $S^{\inv}\left(\M,M\right)$
onto $S^{\fs}\left(\M,M\right)$.
\item \label{prop: properties of F on types3} If $f$ is an $M$-definable function, then $f_{*}\left(F_{M}\left(p\right)\right)=F_{M}\left(f_{*}\left(p\right)\right)$.
\end{enumerate}
\end{prop}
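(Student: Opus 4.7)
The plan is to verify the three clauses one at a time: clauses (1) and (3) reduce to direct unpackings once the right identities in $T'$ are isolated, while clause (2) requires Proposition \ref{prop: honest definitions} together with the $|M|^+$-saturation of $N$. I will use throughout the standard observation that every $\phi(x) \in L(\M)$ admits $\psi(x) \in L(N)$ with $\phi(M) = \psi(M)$: if $b$ lists the parameters of $\phi$, realize $\tp(b/M)$ by some $b' \in N$ and set $\psi = \phi(x, b')$.

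For clause (1), fix $\phi \in L(M)$ and specialize $\psi = \phi$ in the defining condition of $F_M(p)$ (legitimate since $L(M) \subseteq L(N)$ and $\phi(M) = R_\phi(M)$): this reduces $\phi \in F_M(p)$ to $R_\phi \in p'$, equivalently to $p \vdash R_\phi$. By Proposition \ref{prop: honest definitions} this amounts to consistency of $p \cup R_\phi$, which via $T' \models \forall x\,(P(x) \to (R_\phi(x) \leftrightarrow \phi(x)))$ boils down to $\phi \in p$. For clause (3), the analogous identity is $T' \models \forall x\,(P(x) \to (R_\chi(f(x)) \leftrightarrow R_{\chi \circ f}(x)))$, valid because $f(M) \subseteq M$; Proposition \ref{prop: honest definitions} then gives $R_\chi \in (f_* p)' \iff R_{\chi \circ f} \in p'$, and plugging into the defining conditions of $f_*(F_M(p))$ and $F_M(f_* p)$ on each $\phi \in L(\M)$, using $(\phi \circ f)(M) = f^{-1}(\phi(M)) \cap M$, makes both sides reduce to the same requirement on $\phi$.

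For clause (2), three items to check. First, $F_M(p) \in S^{\fs}(\M, M)$, since every $\phi \in F_M(p)$ has $\phi(M) = R_\psi(M) \neq \emptyset$ (otherwise $T' \vdash \neg \exists x\, R_\psi(x)$, contradicting consistency of $p'$), and completeness of $F_M(p)$ propagates this to finite conjunctions. Second, $F_M$ fixes $S^{\fs}(\M, M)$ pointwise: given $p \in S^{\fs}(\M, M)$ and $\phi \in p$, pick $\psi \in L(N)$ with $\psi(M) = \phi(M)$; finite satisfiability of $p$ in $M = P^{M'}$ together with $\phi(M) = R_\psi(M)$ shows $p \cup R_\psi$ is finitely satisfiable in $P^{M'}$, hence consistent, so by Proposition \ref{prop: honest definitions} $R_\psi \in p'$ and $\phi \in F_M(p)$; completeness then forces $p = F_M(p)$. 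Third, continuity: for $\phi \in L(\M)$ with witness $\psi$, $\{p : \phi \in F_M(p)\} = \{p : p \vdash R_\psi\}$, which the proof of honest definitions underlying Proposition \ref{prop: honest definitions} rewrites as a union of basic clopens $[p_0]$ over finite $p_0 \subseteq p$ with $p_0(M) \subseteq \psi(M)$, hence open; the same argument applied to $\neg \phi$ (via $\neg \psi$) gives the complement open, so the set is clopen. I expect the main obstacle to be the retraction step: establishing consistency of $p \cup R_\psi$ across the change of language, given only finite satisfiability of the pure $L$-type $p$ in $M$ whereas $R_\psi$ lives in $L \cup L' \cup \{P\}$. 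The resolution exploits that $P^{M'}$ is a sufficiently saturated $L$-extension of $M$, so every finite fragment of $p \cup R_\psi$, even with parameters in $M' \supseteq M$, is realized inside $P^{M'}$, which is exactly what unlocks Proposition \ref{prop: honest definitions}.
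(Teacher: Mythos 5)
Your argument is correct throughout, though it diverges from the paper's proof in clause (3) in an interesting way. For clauses (1) and (2) your route is essentially the paper's: the paper dismisses (1) as ``clear from the construction,'' and for (2) it uses the same observation that $\phi(x) \in F_M(p) \Leftrightarrow p(x)\land\P(x) \vdash \psi(x)$ to express $F_M^{-1}(\phi)$ as a union of clopens, and establishes the retraction by a short contradiction argument equivalent to your direct one. For clause (3), however, the paper argues semantically: it applies Proposition \ref{prop: honest definitions} and compactness to find a single small set $B$ with $|B|=|N|$ such that $p|_B \vdash p'$ and $f_*(p)|_B \vdash (f_*(p))'$, realizes $p|_B$ by some $a\in M'$, and checks that $b=f(a)$ satisfies both $(f_*(p))'$ and $f_*(p')$, whence the two complete $L'$-types coincide. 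Your proof is instead syntactic: you establish the identity $R_\chi\in(f_*p)' \iff R_{\chi\circ f}\in p'$ directly, using that $f$ maps $\P$ into $\P$, and then observe that $(\chi\circ f)(M)=(\phi\circ f)(M)$ whenever $\chi(M)=\phi(M)$, so the defining conditions for $\phi\in f_*(F_M(p))$ and $\phi\in F_M(f_*p)$ coincide. Both approaches are sound; yours avoids appealing to a realization in $M'$ and instead trades on completeness of $p'$ and $(f_*p)'$ (applying the forward implication to $\neg\chi$ to get the converse).

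A few small points worth tightening. In clause (2), the phrase ``basic clopens $[p_0]$ over finite $p_0\subseteq p$ with $p_0(M)\subseteq\psi(M)$'' is slightly off: the formulas $\chi\in p$ have parameters ranging over all of $M'$, so the containment should be expressed as $\chi(x)\land\P(x)\vdash\psi(x)$ (evaluated in the monster of the pair), not as a containment of traces on $M$. Similarly, in your final sentence you write $M=\P^{M'}$, but in $(N',M')$ the predicate $\P$ names $M'$, not $M$; the argument survives because $M\subseteq M'=\P^{N'}$, so finite satisfiability in $M$ still gives consistency with $\P(x)$, but the identification is misstated. Finally, in clause (3) you implicitly use that $f_*(p)$ is again $M$-invariant (so that $(f_*p)'$ is well-defined and Proposition \ref{prop: honest definitions} applies to it); the paper records this explicitly just before the proposition, and it would be worth echoing that remark.
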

\begin{proof}
\ref{prop: properties of F on types1} Clear from the construction.

\ref{prop: properties of F on types2} Fix a formula $\phi(x) \in L(M')$, and let $\psi(x) \in L(N)$ be such that $\phi(M) = \psi(M)$. Then, unwinding the definition, we see that $\phi(x) \in F_M(p) \Leftrightarrow p(x) \land \P(x) \vdash \psi(x)$. Thus $F_{M}^{-1}\left(\phi\left(x\right)\right)=\bigcup\left\{ \chi(x) \in L(M') :  \chi(x) \land \P(x) \vdash  \psi(x) \right\}$, and $F_M$ is continuous.

Now assume that $p$ is actually finitely satisfiable in $M$, and
that $\phi\left(x\right)\in L\left(\M\right)$ is such that $\phi\left(x\right)\in p$
and $\neg\phi\left(x\right)\in F_{M}\left(p\right)$. But then $\neg\phi\left(M\right)=R_{\psi\left(x\right)}\left(M\right)$ and $R_\psi(x)\in p'$.
This means that there is some $\chi\left(x\right)\in p$ such that
$\chi\left(x\right)\vdash R_{\psi}\left(x\right)$.
But as $\chi\left(x\right)\land\phi\left(x\right)\in p$, by
finite satisfiability there is some $a\in M$ with $a\models R_{\psi\left(x\right)}\left(x\right)\land\phi\left(x\right)$
--- a contradiction. Thus $F_{M}$ is the identity on $S^{\fs}\left(\M,M\right)$.

\ref{prop: properties of F on types3} First observe that it is enough to show that $f_{*}\left(p'\right)=\left(f_{*}\left(p\right)\right)'$.
By compactness and Proposition \ref{prop: honest definitions} there
is some $M\subseteq B\subseteq M'$ such that $\left|B\right|=\left|N\right|$
and $p|_{B}\vdash p'$, $f_{*}\left(p\right)|_{B}\vdash\left(f_{*}\left(p\right)\right)'$.
Let $a$ in $M'$ realize $p|_{B}$, and let $b=f\left(a\right)$.
Then $b\models f_{*}\left(p\right)|_{B}$, thus $ $$b\models\left(f_{*}\left(p\right)\right)'$.
On the other hand, as $a\models p'$, it follows that $b\models f_{*}\left(p'\right)$.
\end{proof}

In fact, \cite[Proposition 1.1]{ExtDefI}
implies the following more explicit statement:
\begin{prop}
Assume that $\left(N',M'\right)$ is a sufficiently saturated elementary extension of $\left(N,M\right)$. Then
for every $\phi\left(x\right)\in L\left(N\right)$ there are some
$\psi\left(x\right),\psi'\left(x\right)\in L\left(M'\right)$ such
that $\psi\left(M' \right)\subseteq\phi\left(M' \right)\subseteq\psi'\left(M' \right)$, $\psi(M)=\psi'(M)=\phi(M)$
and $\psi'\left(x\right)\setminus\psi\left(x\right)$ divides over
$M$ (in the sense of $T$).\end{prop}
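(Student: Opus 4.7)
My plan is to apply the honest definitions result \cite[Proposition 1.1]{ExtDefI} twice — once to $\phi$ to produce the lower sandwich formula $\psi$, and once to $\neg\phi$ to produce (after negation) the upper formula $\psi'$ — and then to combine the two "gap-divides" statements into the single one for $\psi'\setminus\psi$, using that in NIP forking and dividing coincide (Fact~\ref{fac: forking for types in NIP}(1)).

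Concretely, applied to $\phi(x) \in L(N)$ over the sufficiently saturated $(N',M') \succ (N,M)$, Proposition~1.1 of ExtDefI produces $\psi(x) \in L(M')$ with $\psi(M') \subseteq \phi(M')$ and $\psi(M) = \phi(M)$. Inspection of its proof, where the parameter of $\psi$ is chosen from a long $M$-indiscernible sequence $(e_i)_{i < \kappa}$ and the NIP alternation bound caps, for each $a \in \M$, the number of $i$'s with $a \in \phi(\M,d) \setminus \psi(\M,e_i)$, gives in addition that $\phi(x) \wedge \neg\psi(x)$ divides over $M$ in $T$: the sequence $(e_i)$ restricted to a suitable subsequence is an $M$-indiscernible witness of $k$-inconsistency. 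Applying the same to $\neg\phi(x)$ yields $\chi(x) \in L(M')$ with $\chi(M') \subseteq \neg\phi(M')$, $\chi(M) = \neg\phi(M)$, and $\neg\phi(x) \wedge \neg\chi(x)$ dividing over $M$; setting $\psi'(x) := \neg\chi(x)$ gives $\phi(M') \subseteq \psi'(M')$, $\psi'(M) = \phi(M)$, and $\psi'(x) \wedge \neg\phi(x)$ dividing over $M$.

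For the combined gap, the sandwich $\psi(M') \subseteq \phi(M') \subseteq \psi'(M')$ gives the decomposition
\[
\psi'(x) \wedge \neg\psi(x) \;\equiv\; \bigl(\phi(x) \wedge \neg\psi(x)\bigr) \vee \bigl(\psi'(x) \wedge \neg\phi(x)\bigr),
\]
each disjunct of which divides over $M$ by the previous step. Hence the disjunction forks over $M$ by the very definition of forking, and by Fact~\ref{fac: forking for types in NIP}(1) it also divides over $M$. The main obstacle is extracting, from the proof rather than the statement of Proposition~1.1, the strengthening that the gap actually divides: the bare honest-definitions conclusion only forbids a realization of $\phi \wedge \neg\psi$ in $M'$, which is strictly weaker than dividing over $M$ in the full monster; only the explicit choice of the parameter from an $M$-indiscernible sequence, combined with NIP, delivers the $k$-inconsistency required.
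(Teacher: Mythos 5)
Your proposal is correct and follows essentially the same route as the paper: apply honest definitions once to $\phi$ and once to $\neg\phi$, set $\psi'=\neg\chi$, observe the sandwich, and conclude dividing of the gap via $\mathrm{NIP}$ (forking $=$ dividing over a model). The one place you diverge is in how you get dividing. The paper extracts from its own Proposition on honest definitions the intermediate statement ``no $M$-invariant type in $S_L(M')$ satisfies the gap'' for \emph{each} of $\phi\setminus\psi$ and $\neg\phi\setminus\chi$, combines those two facts first (by the sandwich they immediately imply no $M$-invariant type satisfies $\psi'\setminus\psi$), and only then converts a single time to dividing via saturation of $M'$, $\mathrm{NIP}$, and Fact~\ref{fac: forking for types in NIP}\ref{fac: forking for types in NIP 1}. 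You instead claim, by inspecting the proof of Proposition~1.1 of \cite{ExtDefI}, that each gap already divides, and then combine via the disjunction-forks-hence-divides step. Your disjunction identity and the forking/dividing bookkeeping are fine, and the claim that each piece divides is in fact true, but your ``proof inspection'' is an unnecessary detour: once one knows no $M$-invariant type in $S_L(M')$ realizes $\phi\wedge\neg\psi$ (which the paper has already isolated as a clean statement), dividing follows directly, since in $\mathrm{NIP}$ a non-forking formula over the model $M$ extends to an $M$-invariant global type, and $M'$ is a monster. Note also a small slip in your closing remark: the bare honest-definitions statement forbids a realization of $\phi\wedge\neg\psi$ in $M$ (not in $M'$ --- the gap is generically realized in $M'$); your point that this is strictly weaker than dividing is nonetheless right.
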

\begin{proof}
Let $\phi(x)$ be given. The proposition gives us a formula $\psi(x) \in L(M')$ such that $\psi(M') \subseteq \phi(M')$ and moreover no $M$-invariant type in $S_L(M')$ satisfies $\phi(x) \setminus \psi(x)$.
Applying the proposition again to $\neg \phi(x)$, we find some $\chi(x) \in L(M')$ such that $\chi(M') \subseteq \neg \phi(M')$ and no $M$-invariant type in $S_L(M')$ satisfies $\neg \phi(x) \setminus \chi(x)$. But then take $\psi'(x) = \neg \chi(x)$. It follows that $\psi(M') \subseteq \phi(M') \subseteq \psi' (M')$ and that no $M$-invariant type in $S_L(M')$ satisfies $\psi'(x) \setminus \psi(x)$. By saturation of $M'$, $\NIP$ and Fact \ref{fac: forking for types in NIP}\ref{fac: forking for types in NIP 1} it follows that $\psi'(x) \setminus \psi(x)$ divides over $M$.

\end{proof}

\subsection{\label{sec: Extracting the f.s. part of a measure}Extracting the finitely satisfiable
part of an invariant measure}
Now we extend this map $F_M$ to measures.
\begin{rem}
A measure $\mu\in\mes\left(A\right)$ is invariant (finitely satisfiable)
over $B\subseteq A$ if and only if every $p\in S\left(\mu\right)$
is invariant (finitely satisfiable) over $B$.\end{rem}
\begin{proof}
It is clear that if $\mu$ is invariant (finitely satisfiable) over
$B$ then every $p\in S\left(\mu\right)$ is invariant (finitely satisfiable)
over $B$. Conversely, assume that $\mu\left(\phi\left(x,a\right)\right)>0$.
Then it is easy to see by compactness that there is some $p\in S\left(\mu\right)$
with $\phi\left(x,a\right)\in p$.
\end{proof}
Assume that $\mu\in\mes\left(\M\right)$ is $M$-invariant. Then we
can define a measure $\mu'$ on $S^{\inv}\left(\M,M\right)$ by setting
$\mu'\left(S^{\inv}\left(\M,M\right)\cap\phi\left(x,a\right)\right)=\mu\left(\phi\left(x,a\right)\right)/\mu\left(S^{\inv}\left(\M,M\right)\right)$.
If $\mu\left(\phi\left(x,a\right)\bigtriangleup\psi\left(x,b\right)\right)>0$
then there is some $p\in S\left(\mu\right)$ with $\phi\left(x,a\right)\bigtriangleup\psi\left(x,b\right)\in p$.
By the previous remark $p$ is $M$-invariant, thus $\phi\left(x,a\right)\cap S^{\inv}\left(\M,M\right)\neq\psi\left(x,b\right)\cap S^{\inv}\left(\M,M\right)$,
which implies that $\mu'$ is a well-defined.

Conversely, given a measure $\mu'$ on $S^{\inv}\left(\M,M\right)$
we define $$\mu\left(\phi\left(x,a\right)\right)=\mu'\left(\phi\left(x,a\right)\cap S^{\inv}\left(\M,M\right)\right).$$
Then $\mu$ is a measure on $S\left(\M\right)$, and every type in
the support of $\mu$ is invariant, thus $\mu$ is invariant.
\begin{rem}
\label{rem: invariant measure is a measure on invariant types}An $M$-invariant (resp. finitely satisfiable) measure $\mu\in\mes\left(\M\right)$
is the same thing as a measure on $S^{\inv}\left(\M,M\right)$ (resp.
$S^{\fs}\left(\M,M\right)$).\end{rem}
\begin{defn}
\label{fac: push-forward measure}Let $\left(X_{1},\Sigma_{1}\right)$,
$\left(X_{2},\Sigma_{2}\right)$ be measurable spaces and let a Borel
mapping $f:X_{1}\to X_{2}$ be given (e.g. a continuous map). Then,
given a measure $\mu:\Sigma_{1}\to\left[0,1\right]$, the pushforward
of $\mu$ is defined to be the measure $f_{*}\left(\mu\right):\Sigma_{2}\to\left[0,1\right]$
given by $\left(f_{*}\left(\mu\right)\right)\left(A\right)=\mu\left(f^{-1}\left(A\right)\right)$
for $A\in\Sigma_{2}$.
\end{defn}
Given an $M$-invariant global measure $\mu$, by Remark \ref{rem: invariant measure is a measure on invariant types}
we view it as a measure $\mu'$ on the space of invariant types $S^{\inv}\left(\M,M\right)$.
By continuity of $F_{M}$
we thus get a push-forward measure $\left(F_{M}\right)_{*}\left(\mu'\right)$
on the space $S^{\fs}\left(\M,M\right)$. Again by Remark \ref{rem: invariant measure is a measure on invariant types}
this determines a measure $\nu$ on $S\left(\M\right)$ which is finitely
satisfiable in $M$. We define $F_{M}\left(\mu\right)=\nu$.
\begin{prop}
\label{prop: properties of F on measures}The map $F_{M}$ satisfies
the following properties:
\begin{enumerate}
\item \label{prop: properties of F on measures 1} $F_{M}\left(\mu\right)|_{M}=\mu|_{M}$.
\item \label{prop: properties of F on measures 2} $F_{M}$ is a continuous retraction from $\mes^{\inv}\left(\M,M\right)$
to $\mes^{\fs}\left(\M,M\right)$.
\item \label{prop: properties of F on measures 3} If $f$ is an $M$-definable function, then $f_{*}\left(F_{M}\left(\mu\right)\right)=F_{M}\left(f_{*}\left(\mu\right)\right)$.
\end{enumerate}
\end{prop}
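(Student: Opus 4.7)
The plan is to lift each part of Proposition \ref{prop: properties of F on types} from types to measures by using Remark \ref{rem: invariant measure is a measure on invariant types} to identify invariant (resp.\ finitely satisfiable) global measures with Borel measures on the closed subspaces $S^{\inv}(\M, M)$ (resp.\ $S^{\fs}(\M, M)$) of $S(\M)$, together with the general fact that the pushforward $(F_M)_*$ inherits continuity and the retraction property from the underlying continuous retraction on types.

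For \ref{prop: properties of F on measures 1}, given a formula $\phi(x, a) \in L(M)$, I would unwind the definition as
\[
F_M(\mu)(\phi(x, a)) = (F_M)_*(\mu')\bigl(\phi(x, a) \cap S^{\fs}(\M, M)\bigr) = \mu'\bigl(F_M^{-1}(\phi(x, a) \cap S^{\fs}(\M, M))\bigr).
\]
By Proposition \ref{prop: properties of F on types}\ref{prop: properties of F on types1}, $F_M(p)$ and $p$ agree on $L(M)$-formulas for every $p \in S^{\inv}(\M, M)$, so this preimage equals $\phi(x, a) \cap S^{\inv}(\M, M)$, yielding $F_M(\mu)(\phi(x, a)) = \mu(\phi(x, a))$.

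For \ref{prop: properties of F on measures 2}, continuity of the pushforward along a continuous map of compact Hausdorff spaces in the weak-$*$ topology is standard; concretely, every clopen in $S^{\fs}(\M, M)$ is of the form $\phi(x) \cap S^{\fs}(\M, M)$ for some $\phi \in L(\M)$, its preimage under $F_M$ is clopen in $S^{\inv}(\M, M)$ by continuity of $F_M$ on types (Proposition \ref{prop: properties of F on types}\ref{prop: properties of F on types2}), hence expressible as $\psi(x) \cap S^{\inv}(\M, M)$ for some $\psi \in L(\M)$ by Stone duality, and $\mu \mapsto \mu(\psi(x))$ is continuous by the definition of the weak-$*$ topology on $\mes(\M)$. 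For the retraction statement, if $\mu \in \mes^{\fs}(\M, M)$, then $\mu'$ is supported on $S^{\fs}(\M, M)$, on which $F_M$ is the identity by Proposition \ref{prop: properties of F on types}\ref{prop: properties of F on types2}, so $(F_M)_*(\mu') = \mu'$ and hence $F_M(\mu) = \mu$.

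Finally, for \ref{prop: properties of F on measures 3}, since $f$ is $M$-definable, $f_*$ preserves both $M$-invariance and finite satisfiability in $M$, so it induces continuous self-maps of $S^{\inv}(\M, M)$ and $S^{\fs}(\M, M)$. By Proposition \ref{prop: properties of F on types}\ref{prop: properties of F on types3}, the continuous maps $f_* \circ F_M$ and $F_M \circ f_*$ from $S^{\inv}(\M, M)$ to $S^{\fs}(\M, M)$ coincide, hence so do their pushforwards applied to $\mu'$, which translates back through Remark \ref{rem: invariant measure is a measure on invariant types} to $f_*(F_M(\mu)) = F_M(f_*(\mu))$. The only genuine work lies in the bookkeeping of the identification between global measures and measures on the closed subspaces $S^{\inv}$ and $S^{\fs}$; modulo this, each part is a formal consequence of the corresponding type-level statement, so I do not anticipate any real obstacle.
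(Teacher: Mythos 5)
Your proposal is correct and matches the paper's approach: the paper's proof is literally the one-line remark that the proposition ``follows from Proposition \ref{prop: properties of F on types} by unwinding the definition of $F_M(\mu)$,'' and what you have written is exactly that unwinding, passing each part through the pushforward and the identification of Remark \ref{rem: invariant measure is a measure on invariant types}.
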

\begin{proof}
Follows from Proposition \ref{prop: properties of F on types} by
unwinding the definition of $F_{M}\left(\mu\right)$.
\end{proof}

\subsection{Lifting measures to Shelah's expansion}

The following fact is well-known for types, and we observe that it
easily generalizes to measures.
\begin{prop}
\label{prop: measures in M^ext are f.s.} The
measures on $M^{\ext}$ are in a natural one-to-one correspondence
with global measures finitely satisfiable in $M$.\end{prop}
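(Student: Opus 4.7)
The plan is to extend the well-known bijection for types to measures, exploiting in both directions that, by quantifier elimination for $T'$ and $|M|^+$-saturation of $N$, every externally definable subset of $M^n$ is of the form $R_\psi(M)=\psi(M)$ for some $\psi(x)\in L(N)\subseteq L(\M)$. I work with the setup fixed at the start of Section~\ref{sec: Extracting the f.s. part}.

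For the forward map, given $\mu\in\mes(\M)$ finitely satisfiable in $M$, I would define a measure $\mu^{\ext}$ on $M^{\ext}$ by the rule $\mu^{\ext}(R_\psi(x)):=\mu(\psi(x))$ for each atomic $L'$-formula $R_\psi$ (with $\psi\in L(N)$), and extend by finite additivity and quantifier elimination in $T'$ to the whole Boolean algebra of $L'(M^{\ext})$-formulas. The only non-trivial point is well-definedness: if $\psi_1(x),\psi_2(x)\in L(N)$ satisfy $\psi_1(M)=\psi_2(M)$, then $\psi_1(x)\bigtriangleup\psi_2(x)$ is an $L(\M)$-formula with no realization in $M$, so finite satisfiability of $\mu$ forces $\mu(\psi_1(x)\bigtriangleup\psi_2(x))=0$ and hence $\mu(\psi_1)=\mu(\psi_2)$. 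Finite additivity of $\mu^{\ext}$ is inherited from $\mu$.

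For the inverse map, given $\nu\in\mes(M^{\ext})$, I would define $\tilde\nu\in\mes(\M)$ as follows. For each $\phi(x,b)\in L(\M)$, pick by saturation of $N$ some $\psi_{\phi,b}(x)\in L(N)$ with $\phi(M,b)=\psi_{\phi,b}(M)$, and set $\tilde\nu(\phi(x,b)):=\nu(R_{\psi_{\phi,b}}(x))$. Well-definedness is immediate from the interpretation of $L'$ in $M^{\ext}$: if $\phi_1(M,b_1)=\phi_2(M,b_2)$, then $R_{\psi_1}$ and $R_{\psi_2}$ define the same subset of $M^{\ext}$ and therefore have equal $\nu$-measure. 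Finite additivity of $\tilde\nu$ follows from the fact that if $\phi_1(\M,b_1)\cap\phi_2(\M,b_2)=\emptyset$ then $\phi_1(M,b_1)\cap\phi_2(M,b_2)=\emptyset$, so the corresponding predicates are disjoint in $M^{\ext}$. Finally, $\tilde\nu$ is finitely satisfiable in $M$: if $\tilde\nu(\phi(x,b))>0$, then $\nu(R_{\psi_{\phi,b}})>0$, which forces $\psi_{\phi,b}(M)\neq\emptyset$ (since the empty predicate has $\nu$-measure $0$), hence $\phi(x,b)$ is realized in $M$.

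A direct check from the definitions shows that these two assignments are mutually inverse and preserve restrictions to $M$, giving the claimed bijection. I expect the substantive point to be the well-definedness of the forward map: this is precisely where the finite satisfiability hypothesis is used, exactly mirroring the role it plays in the analogous statement for types. Everything else is routine bookkeeping, transferring Boolean identities back and forth between $L(\M)$-formulas and their $L'$-incarnations.
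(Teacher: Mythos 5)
Your proposal is correct and follows essentially the same approach as the paper: both directions transfer measures via the identification of externally definable subsets of $M$ with $L(\M)$-formulas, with finite satisfiability doing the work in the well-definedness check for the forward map and the finite-satisfiability check for the inverse. The only cosmetic difference is that you phrase things through the explicit $L'$/$R_\psi$ machinery and a saturated $N$, while the paper works directly with $\phi(M,a)$ for $a\in\M$; the mathematical content is identical.
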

\begin{proof}
By quantifier elimination, every definable subset of $M^{\ext}$ is
of the form $\phi\left(M,a\right)$ for some $a\in\M$. Given a global
measure $\mu$ finitely satisfiable in $M$, we define a measure $\mu'\in\mes\left(M^{\ext}\right)$
as follows: given an externally definable set $X\subseteq M$, we set
$\mu'\left(X\right)=\mu\left(\phi\left(x,a\right)\right)$ for some
$\phi\left(x,a\right)\in L\left(\M\right)$ such that $X=\phi\left(M,a\right)$.
It is well-defined because if $X=\phi\left(M,a\right)=\psi\left(M,b\right)$
then $\mu\left(\phi\left(x,a\right)\right)=\mu\left(\psi\left(x,b\right)\right)$
(as otherwise $\mu\left(\phi\left(x,a\right)\bigtriangleup\psi\left(x,b\right)\right)>0$,
thus there is some $c\models\phi\left(x,a\right)\bigtriangleup\psi\left(x,b\right)$
in $M$ by finite satisfiability --- a contradiction) and is clearly
a measure on $S\left(M^{\ext}\right)$.

Conversly, given a measure $\mu'\in\mes\left(M^{\ext}\right)$, for
$\phi\left(x,a\right)\in L\left(\M\right)$ we define $\mu\left(\phi\left(x,a\right)\right)=\mu'\left(\phi\left(M,a\right)\right)$.
It is easy to see that $\mu$ is a global measure and that whenever
$\mu\left(\phi\left(x,a\right)\right)>0$ then $\mu'\left(\phi\left(M,a\right)\right)>0$,
thus $\phi\left(M,a\right)$ is non-empty.
\end{proof}

We are ready to prove the main theorem of the section.
\begin{thm}
\label{thm: lifting measures to Shelah's expansion}Assume that $M\models T$ and $G$ is an $M$-definable group.
\begin{enumerate}
\item Let $\mu$ be a $G\left(M\right)$-invariant measure on $S_{G}\left(M\right)$. Then there is some measure $\mu'$ on $S_{G}\left(M^{\ext}\right)$ which extends
$\mu$ and is $G\left(M\right)$-invariant.
\item Assume that the action of $G\left(M\right)$ on $S_{G}\left(M\right)$ has
a fixed point $p$. Then there is some $p'\in S_{G}\left(M^{\ext}\right)$ which extends $p$ and is $G\left(M\right)$-invariant.
\end{enumerate}
\end{thm}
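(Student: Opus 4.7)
The plan is to bolt together the three tools developed above: Proposition \ref{prop: extending to a G-inv M-inv measure} (every $G(M)$-invariant measure on $M$ extends to a global measure $\nu$ that is both $M$-invariant and $G(\M)$-invariant), Proposition \ref{prop: properties of F on measures} (the continuous retraction $F_{M}$ from $\mes^{\inv}(\M,M)$ onto $\mes^{\fs}(\M,M)$ preserves the restriction to $M$ and commutes with pushforwards by $M$-definable maps), and Proposition \ref{prop: measures in M^ext are f.s.} (measures on $S_{G}(M^{\ext})$ are in canonical bijection with global $G$-measures finitely satisfiable in $M$). With these in hand the strategy is essentially forced: extend $\mu$ to $\nu$, apply $F_{M}$, then read the output back in $M^{\ext}$.

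For (1), I would start from a $G(M)$-invariant $\mu\in\mes(S_{G}(M))$, use Proposition \ref{prop: extending to a G-inv M-inv measure} to obtain $\nu\in\mes(\M)$ extending $\mu$ that is both $G(\M)$-invariant and $M$-invariant, and set $\mu':=F_{M}(\nu)$, viewed as a measure on $S_{G}(M^{\ext})$ via Proposition \ref{prop: measures in M^ext are f.s.}. That $\mu'$ extends $\mu$ follows from Proposition \ref{prop: properties of F on measures}\ref{prop: properties of F on measures 1}. For $G(M)$-invariance the crucial observation is that for each $g\in G(M)$ the left-translation $\lambda_{g}:x\mapsto g\cdot x$ is an $M$-definable map on $G$, so Proposition \ref{prop: properties of F on measures}\ref{prop: properties of F on measures 3} yields
\[
(\lambda_{g})_{*}\mu'=(\lambda_{g})_{*}F_{M}(\nu)=F_{M}((\lambda_{g})_{*}\nu)=F_{M}(\nu)=\mu',
\]
where the last equality uses that $G(\M)$-invariance of $\nu$ specialises to $(\lambda_{g})_{*}\nu=\nu$ for every $g\in G(M)$. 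For (2) the same scheme applies at the type level: take $p^{*}\in S_{G}(\M)$ a non-forking heir of $p$ (Fact \ref{fac: forking for types in NIP}\ref{fac: forking for types in NIP 2}), so that $p^{*}$ is $M$-invariant and, by the type-level analogue of Proposition \ref{prop: heir remains G-invariant} (the argument already used in the converse direction of Proposition \ref{prop: def ext am iff}), also $G(\M)$-invariant; then $p':=F_{M}(p^{*})$ is finitely satisfiable in $M$ by Proposition \ref{prop: properties of F on types}\ref{prop: properties of F on types2}, extends $p$ by \ref{prop: properties of F on types1}, is $G(M)$-invariant by \ref{prop: properties of F on types3} applied to $\lambda_{g}$ exactly as above, and corresponds to a type in $S_{G}(M^{\ext})$ via Fact \ref{fac: Shelah's theorem on externally definable sets}.

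The one genuinely delicate step is the bookkeeping that identifies the $G(M)$-action on externally definable subsets of $G$ with the pushforward action $(\lambda_{g})_{*}$ on global $L$-formulas: for $X=\phi(M,a)\cap G(M)$ the translate $gX$ is cut out by $\phi(g^{-1}\cdot x,a)$, so $G(M)$-invariance of $\mu'$ (resp.\ $p'$) as an object on $M^{\ext}$ is equivalent to $(\lambda_{g})_{*}\mu'=\mu'$ (resp.\ $(\lambda_{g})_{*}p'=p'$) on the corresponding global object. I do not anticipate any obstacle beyond this explicit verification; everything else is a formal chase through the diagrams supplied by the previous subsections.
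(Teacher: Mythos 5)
Your proof is correct and follows essentially the same route as the paper: extend $\mu$ to a global $M$-invariant, $G(\M)$-invariant measure by Proposition~\ref{prop: extending to a G-inv M-inv measure}, apply the retraction $F_{M}$ using Proposition~\ref{prop: properties of F on measures}, and read off a measure on $S_{G}(M^{\ext})$ via Proposition~\ref{prop: measures in M^ext are f.s.}. The only cosmetic difference is that for part (2) you carry out the type-level argument explicitly (via a non-forking heir and Proposition~\ref{prop: properties of F on types}), whereas the paper simply notes that the measure argument specializes to zero-one measures; these are equivalent.
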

\begin{proof}
Let $\mu\in\mes\left(M\right)$ be a $G\left(M\right)$-invariant
measure on $S_{G}\left(M\right)$. By Proposition \ref{prop: extending to a G-inv M-inv measure}
there is some global measure $\mu'$ invariant over $M$ which is
in addition $G\left(\M\right)$-invariant. Now let $\nu=F_{M}\left(\mu'\right)$
be a global measure finitely satisfiable in $M$, as constructed in
Section \ref{sec: Extracting the f.s. part}. By Proposition \ref{prop: properties of F on measures}\ref{prop: properties of F on measures 1}
it is still a measure on $S_{G}\left(\M\right)$, extending $\mu$.
We claim that $\nu$ is $G\left(M\right)$-invariant. Indeed, by Proposition
\ref{prop: properties of F on measures}\ref{prop: properties of F on measures 3} and $G\left(M\right)$-invariance
of $\mu'$, for every $g\in G\left(M\right)$ we have $g\cdot\nu=g\cdot F_{M}\left(\mu'\right)=F_{M}\left(g\cdot\mu'\right)=F_{M}\left(\mu'\right)=\nu$.
So $\nu$ is $G\left(M\right)$-invariant and finitely satisfiable
in $M$, thus by Proposition \ref{prop: measures in M^ext are f.s.}
it corresponds to a $G\left(M\right)$-invariant measure on $S_{G}\left(M^{\ext}\right)$,
as wanted.

For the case of the existence of a fixed point in $S_{G}\left(M\right)$
the proof goes through by restricting to zero-one measures.
\end{proof}
We remark that as both existence of a fixed point and definable amenability
are properties of the theory, the same holds in the monster model
of $\Th\left(M^{\ext}\right)$.

\subsection{Definable $f$-generics and fsg}

We aim towards proving Theorem \ref{thm: fsg and definable generics}. As before, we are assuming that $T$ has $\NIP$ and $M\models T$. We begin by pointing out that any definable complete type over $M$ has a unique extension to a complete type over $M^{\ext}$. This was observed in Claim 1, Proposition 57, of \cite{2012arXiv1202.2650C}, but we give another proof here. We will use the notation at the beginning of Subsection 2.2, namely $M, N, M', N',\P, L'$. In particular $M$ as an $L'$-structure is precisely $M^{\ext}$, and $M'$ as an $L'$-structure is a saturated model of $Th(M^{\ext})$, whose $L$-reduct can be identified with the monster model of $T$. 

\begin{lem} \label{lem: extensions of definable types}
Suppose $p(x)\in S(M)$ is definable. Then $p(x)$ implies a unique complete type $p^{*}(x)\in S(M^{\ext})$. Moreover if 
$\bar p$ is the unique heir of $p$ over the $L$-structure $M'$, then again $\bar p$ implies a unique complete type over $M'$ as an $L'$-structure, which is precisely the unique heir of $p^{*}$.
\end{lem}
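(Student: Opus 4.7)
The plan is to combine Proposition \ref{prop: honest definitions} with the standard fact that a definable type has a unique heir, applied in both languages $L$ and $L'$.

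For existence of $p^*$: since $p$ is definable it has a unique $L$-heir $\bar p \in S_L(M')$, namely the $L$-definable extension of $p$ to $M'$; in particular $\bar p$ is $M$-invariant. Applying Proposition \ref{prop: honest definitions} to both $\phi$ and $\neg \phi$, for every $R_\phi(x) \in L'$ exactly one of $R_\phi(x)$, $\neg R_\phi(x)$ is consistent with $\bar p$ and hence implied by $\bar p$. Thus $\bar p$ together with these implied literals forms a consistent complete $L'$-type $\tilde p$ over $M'$, and $p^* := \tilde p \restriction M^{\ext}$ is an $L'$-extension of $p$ in $S_{L'}(M^{\ext})$.

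For uniqueness and the moreover statement, let $q^* \in S_{L'}(M^{\ext})$ be any $L'$-extension of $p$. Since $q^*$ is a type over a model of $T'$, it admits an $L'$-heir $\tilde q \in S_{L'}(M')$. Because $L \subseteq L'$ and $q^* \restriction L = p$, the restriction $\tilde q \restriction L$ inherits the heir property, so is an $L$-heir of $p$ and therefore equals $\bar p$ by uniqueness of the $L$-heir of a definable type. Since $\tilde q$ is a complete $L'$-type containing $\bar p$ and $\bar p$ already decides every $R_\phi(x)$ by Proposition \ref{prop: honest definitions}, we must have $\tilde q = \tilde p$. Hence $q^* = \tilde q \restriction M = \tilde p \restriction M = p^*$, proving uniqueness; specializing to $q^* = p^*$ shows that $\tilde p$ is the unique $L'$-heir of $p^*$.

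No substantial obstacle is expected: Proposition \ref{prop: honest definitions} already encapsulates the $\NIP$ content, and the rest is assembly from the definitions of heir and of the Shelah expansion.
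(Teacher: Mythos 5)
Your argument for the first assertion (existence and uniqueness of $p^{*}\in S(M^{\ext})$) is essentially correct, and it does take a genuinely different route from the paper. The paper shows directly that $p(x)$ (over $M$) implies each $R_{\phi}(x)\in p^{*}$ by using the $\psi$-defining schema $\chi(y)$ of $p$ and a Tarski--Vaught descent from $(N',M')$ to $(N,M)$ to replace the external parameter $c$ by one $c_{0}\in M$. You instead establish uniqueness of the extension via the heir of $p^{*}$ and the uniqueness of the $L$-heir of a definable type, and then observe (implicitly) that uniqueness of the extension implies $p\vdash p^{*}$. Both arguments rely on Proposition \ref{prop: honest definitions} for the $\NIP$ content; definability and unique-heir are two faces of the same coin, so this part of your proposal is a legitimate alternative.

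However, the ``moreover'' clause is not covered. You claim that $\bar p$ together with the decided literals $R_{\phi}(x)$ (for $\phi\in L(N)$) ``forms a consistent complete $L'$-type $\tilde p$ over $M'$,'' and then conclude ``$\tilde q=\tilde p$.'' This overreaches: an $L'$-formula over $M'$ in the variable $x$ can have parameters $a\in M'$ that do not lie in $N$, e.g.\ $R_{\phi(x,y)}(x,a)$ with $\phi(x,y)\in L(N)$ and $a\in M'\setminus N$, and such a formula cannot be absorbed into a parameterless $R_{\psi}(x)$ with $\psi\in L(N)$. Proposition \ref{prop: honest definitions} decides only the latter; so deciding all $R_{\phi}(x)$ with $\phi\in L(N)$ does \emph{not} yield a complete $L'$-type over $M'$, and your $\tilde p$ is not even well-defined as a complete type. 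What your argument genuinely establishes is that $\tilde q$ and $\tilde p$ agree on $L'$-formulas over $M^{\ext}$ (because those do reduce to $R_{\psi}(x)$, $\psi\in L(N)$), which is why the first part goes through. To prove the moreover clause one needs to control formulas with parameters from all of $M'$, which the paper achieves by reapplying the first part of the Lemma with $M'$ in place of $M$: $\bar p$ is definable over $M'$, hence implies a unique complete type over $(M')^{\ext}$, and every $L'$-definable subset of $M'$ is externally definable over $M'$, hence decided. That recursive step is exactly what is missing from your proposal.
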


\begin{proof} Let $\bar p$  be the unique heir of $p$ over $M'$ (as an $L$-structure). By Proposition \ref{prop: honest definitions}, $\bar p$ implies a unique complete type $p^{*}(x)$ over $M^{\ext}$. So if $R_{\phi}$ is in $p^{*}(x)$, then  in  a saturated elementary extension of $\left(N',M',\left(R_{\phi}\right)_{\phi\in L\left(N\right)}\right)$ we have the implication ${\bar p}(x)\wedge \P(x) \vdash  R_{\phi}(x)$, so by compactness there is $\psi(x,c)\in {\bar p}$ such that 
$\left(N',M',\left(R_{\phi}\right)_{\phi\in L\left(N\right)}\right) \models \forall x\in \P(\psi(x,c) \rightarrow R_{\phi}(x))$
\newline
Let $\chi(y)$ be an $L$-formula over $M$ which is the $\psi(x,y)$-definition of $\bar p$ (equivalently of $p$). 
Hence $\models \chi(c)$, so by Tarski-Vaught, there is $c_{0}\in M$ such that $\left(N,M,\left(R_{\phi}\right)_{\phi\in L\left(N\right)}\right)\models \chi(c_{0}) \wedge \forall x\in \P(\psi(x,c_{0}) \rightarrow R_{\phi}(x))$

As $\psi(x,c_{0})\in p(x)$, we see that $p(x)$ implies $R_{\phi}(x)$ as required.

This proves the first part of the Lemma. 

The moreover clause follows in a similar fashion. Namely by the first part, $\overline p$, being definable, implies a unique complete type over $(M')^{\ext}$, in particular implies a unique complete $L'$-type over $M'$, which can be checked to be the unique heir of $p^{*}$. 
\end{proof}

\begin{thm}
\label{thm: fsg and definable generics}
Suppose that $M\models T$ and $G$ is a group definable over $M$.
\begin{enumerate}
\item \label{thm: fsg and definable generics 1} If $G$ has a global $\fsg$ type (with respect to $M$), then $G$ has a global $\fsg$ type with respect to $M^{\ext}$ in  $\Th\left(M^{\ext}\right)$.
\item \label{thm: fsg and definable generics 2} If $G$ has a global $f$-generic which is definable over $M$, then the same is true for $\Th\left(M^{\ext}\right)$.
\end{enumerate}
\end{thm}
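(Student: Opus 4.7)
Proof plan.

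The plan for both parts follows a uniform strategy: in each case I take the given global $L$-type $p\in S_G(\M)$ witnessing the property in $T$ and form its unique extension to an $L'$-type over $\M'$. The first observation is that in both (1) and (2), every translate $g\cdot p$ (for $g\in G(\M')=G(\M)$) is $M$-invariant. For (1), $\fsg$ says each such translate is finitely satisfiable in $M$, and finite satisfiability in a small model always implies $M$-invariance (if $\phi(x,a)\in g\cdot p$ and $a\equiv_M a'$, any $b\in M$ realizing $\phi(x,a)$ also realizes $\phi(x,a')$, ruling out $\neg\phi(x,a')\in g\cdot p$). For (2), $f$-genericity directly forces non-forking, hence $M$-invariance, of each translate. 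Proposition \ref{prop: honest definitions} (applied in the monster $\M'$ of $T'$ in place of $M'$, using saturation) then yields a unique extension of each $g\cdot p$ to an $L'$-type over $\M'$. Letting $q\in S_G(\M')$ be the unique $L'$-extension of $p$, commutativity of translation with honest-definition extraction gives $g\cdot q$ as the unique $L'$-extension of $g\cdot p$ for every $g$. It remains to show $q$ itself witnesses the corresponding property in $T'$.

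For part (2), I verify that $q$ is a definable $f$-generic over $M^{\ext}$ in $T'$. Definability over $M^{\ext}$ is immediate from Lemma \ref{lem: extensions of definable types}: since $p$ is definable over $M$, $q$ is the unique $L'$-heir of the definable type $p^*\in S_G(M^{\ext})$ and therefore itself definable over $M^{\ext}$. For the $f$-generic property, an $L'$-equivalence $c\equiv_{M^{\ext}}c'$ implies the $L$-equivalence $c\equiv_M c'$, so the $M$-invariance of $g\cdot p$ handles all $L$-formulas in $g\cdot q$; the $R_\psi$-predicates are parameter-free in $L'$ (their parameters being absorbed into the predicate names), hence automatically invariant, and boolean combinations complete the argument.

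For part (1), I verify that every $g\cdot q$ is finitely satisfiable in $M^{\ext}$. By quantifier elimination in $T'$ and the identity $\neg R_\psi\equiv R_{\neg\psi}$ (both define $M\setminus\psi(\M)$ on $M$), any $L'$-formula $\phi'(x,c)\in g\cdot q$ reduces via DNF to the case $\phi'(x,c)=\phi(x,c)\land R_\psi(x)$, with $\phi\in L$ and $\psi\in L(N)$. Membership decomposes as $\phi(x,c)\in g\cdot p$ and $R_\psi(x)\in g\cdot q$. By the proof of Proposition \ref{prop: honest definitions} (via honest definitions), the latter is witnessed by a finite $p_0\subseteq g\cdot p$ with $p_0(x)\land\P(x)\vdash\psi(x)$. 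Finite satisfiability of $g\cdot p$ in $M$ then yields some $a\in M$ realizing the finite set $\{\phi(x,c)\}\cup p_0$; the implication forces $a\in\psi(\M)=R_\psi(M)$, and so $a\models\phi(x,c)\land R_\psi(x)$, as required.

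The main technical obstacle is the finite-satisfiability verification in (1): one must coordinate two different smallness conditions — finite satisfiability of $g\cdot p$ in $M$ at the $L$-level, and the honest-definition description of how $R_\psi$-membership of $g\cdot q$ is controlled by a small $p_0\subseteq g\cdot p$ — so that a single $a\in M$ simultaneously realizes the $L$-part $\phi(x,c)$ and witnesses the $R_\psi$-part by lying in $\psi(\M)$. Once this coupling is achieved, the rest reduces to standard quantifier-elimination bookkeeping.
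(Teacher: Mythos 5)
Your overall strategy (lift the witnessing type to the unique $L'$-extension, then verify the corresponding property) is reasonable, but there is a recurring gap that affects both parts: you treat the atomic $L'$-formulas as if they split cleanly into $L$-formulas with parameters and parameter-free predicates $R_\psi(x)$ with $\psi\in L(N)$. This is not what $L'(\M')$ looks like. An atomic $L'$-formula over $\M'$ can be $R_\psi(x,\bar c)$ with $\psi(x,\bar y)\in L(N)$ and $\bar c$ a tuple from $\M'$, i.e.\ the $R$-predicates \emph{do} take parameters from $\M'$ in their argument slots, and the $N$-parameters absorbed into the name of $R_\psi$ do not help with those. Consequently the DNF-reduction in part (1) to the form $\phi(x,c)\wedge R_\psi(x)$ is not valid in general, and the ``$R_\psi$ is parameter-free, hence automatically invariant'' step in part (2) does not address the formulas that actually need to be checked.

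There is a second problem specific to part (1): you invoke Proposition \ref{prop: honest definitions} to produce a \emph{unique} $L'$-extension of $g\cdot p$ to a type over the monster $\M'$. But that proposition only decides the parameter-free predicates $R_\phi(x)$ for $\phi(x)\in L(N)$, and therefore only determines a unique $L'$-type over $M^{\ext}$ (this is essentially Proposition \ref{prop: measures in M^ext are f.s.} for types). It does not decide $R_\psi(x,\bar c)$ for $\bar c\in\M'\setminus M$, so it does not yield a unique $L'$-type over $\M'$ from a merely $M$-invariant $L$-type. For definable types this uniqueness is saved by Lemma \ref{lem: extensions of definable types} via the unique heir, which is why your part (2) is closer to correct; but for finitely satisfiable types no such rescue is available, and the object $q$ you work with in part (1) is not well-defined by the machinery you cite.

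The paper avoids both issues. In part (1) it does not try to stay in $L'$: it passes to the larger language $L''$ of $(M')^{\ext}$ and a saturated $L''$-model $M''$, so that the relevant predicates are parameter-free \emph{over that bigger structure}, applies honest definitions there to get an $L''$-type $q$ over $(M')^{\ext}$ with all translates finitely satisfiable in $M$, and then restricts $q$ to $L'$ at the end. In part (2) it does not argue invariance of translates formula by formula; instead it uses Lemma \ref{lem: extensions of definable types} plus the observation that $\Stab(\bar p^{*})=\Stab(\bar p)=G^{00}(M')$ (by uniqueness of the $L'$-extension), so $\bar p^*$ has bounded stabilizer and is therefore $f$-generic. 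Your part (2) can actually be repaired without the stabilizer: you already observe that translation commutes with the extension and that each $g\cdot p$ is definable over $M$, so by Lemma \ref{lem: extensions of definable types} each $g\cdot q$ is definable over $M^{\ext}$, hence $M^{\ext}$-invariant, and you are done -- but the ``parameter-free $R_\psi$'' argument as written is a wrong turn and should be replaced by this. Part (1) needs the paper's detour through $L''$ (or some equivalent device handling $\M'$-parameters); as written it does not close.
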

\begin{proof}   
\ref{thm: fsg and definable generics 1} Let $L''$ be the language of $(M')^{\ext}$, and let $M''$ be a saturated elementary extension of $M'$ as an $L''$-structure. As $G$ is $\fsg$ in $T$ and $M'' \succ_L M$, there is some $p \in S_L(M'')$ such that $gp$ is finitely satisfiable in $M$ for all $g \in G(M'')$. It determines a complete type $q \in S_{L''}((M')^{\ext})$ such that moreover $g q$ is finitely satisfiable in $M$ for all $g \in G(M')$. Let $r = q \restriction L'$, it satisfies the same property. As $M' \succ_{L'} M^{\ext}$ is a saturated extension, it follows that $G$ is $\fsg$ in $\Th(M^{\ext})$.

\ref{thm: fsg and definable generics 2} We continue with the same notation.  Our assumptions give us a complete $L$-type $\bar p$ over $M'$,  which is definable over $M$ and such that 
for every $g\in G(M')$, $g{\bar p}$ is definable over $M$. So the stabilizer of ${\bar p}$ is $G^{00}(M')$. By  Lemma \ref{lem: extensions of definable types}, 
${\bar p}$ extends to a unique complete $L'$-type ${\bar p}^{*}$ over $M'$ which is moreover definable over $M$. So $\Stab({\bar 
p}^{*})$ is also $G^{00}(M')$, in particular has bounded index, so clearly ${\bar p}^{*}$ is also a global $f$-generic of $G$, definable over 
$M^{\ext}$  in $Th(M^{\ext})$, as required.
\end{proof}

\section{Connected components}\label{sec: connected components}

In this section we will show that the model-theoretic connected components
are not affected by adding externally definable sets. For simplicity
of notations we will be assuming that our group $G$ is the whole universe.

~

Let $N\succeq M\models T$. By an elementary pair of models $\left(N,M\right)$
we always mean a structure in the language $L_{\P}=L\cup\left\{ \P\left(x\right)\right\} $
whose universe is $N$ and such that $\P\left(N\right)=M$. We say
that an $L_{\P}$-formula is \emph{bounded} if it is of the form $Q_{0}x_{0}\in\P\ldots Q_{n-1}x_{n-1}\in\P\phi\left(x_{0},\ldots,x_{n-1},\bar{y}\right)$
where $Q_{i}\in\left\{ \exists,\forall\right\} $ and $\phi\left(\bar{x},\bar{y}\right)\in L$.
We will denote the set of all bounded formulas by $L_{\P}^{\bdd}$.

An $L_{\P}$-formula $\phi\left(x,y\right)\in L_{\P}$ is $\NIP$
over $\P$ (modulo some fixed theory of elementary pairs $T_{\P}$)
if for some $n<\omega$ there are no $\left(b_{i}:i<n\right)$ in
$N$ and $\left(a_{s}:s\subseteq n\right)$ in $\P$ such that $\phi\left(a_{s},b_{i}\right)$
$\Leftrightarrow$ $i\in s$. By the usual compactness argument, $\phi\left(x,y\right)$
is not $\NIP$ over $\P$ if and only if there is some $\left(N,M\right)\models T_{\P}$
in which we can find an $L_{\P}$-indiscernible sequence $\left(a_{i}:i<\omega\right)$
in $\P$ and $b\in N$ such that $\left(N,M\right)\models\phi\left(a_{i},b\right)$
$\Leftrightarrow$ $i$ is even (any sufficiently saturated pair would
do).
\begin{rem}
\label{rem: bounded formulas are NIP}Let $\left(N,M\right)$ be an
elementary pair of models of an $\NIP$ theory $T$. Then every bounded
formula is $\NIP$ over $\P$ modulo $T_{\P}=\Th\left(N,M\right)$.\end{rem}
\begin{proof}
Let $\phi\left(x,y\right)\in L_{\P}^{\bdd}$ be given, and assume
that it is not $\NIP$ over $\P$. By the previous paragraph this
means that there is some $\left(N,M\right)\models T_{\P}$, $\left(a_{i}:i<\omega\right)$
in $M$ and $b\in N$ such that $\left(N,M\right)\models\phi\left(a_{i},b\right)$
$\Leftrightarrow$ $i$ is even. Take some $M'\succ M$ which is $|M|^{+}$-saturated. By Shelah's
theorem $\Th\left(M^{\ext}\right)$ eliminates quantifiers, that is
for every $a\in N$ there is some $\psi\left(x,y\right)\in L$ and
$c \in M'$ such that $\phi\left(M,a\right)=\psi\left(M,c\right)$.
In particular $M'\models\psi\left(a_{i},c\right)$ $\Leftrightarrow$
$i$ is even, contradicting the assumption that $T=\Th_{L}\left(M'\right)$
is $\NIP$.
\end{proof}
Note that the theory $T_{\P}$ of pairs need not be $\NIP$ in general.
In \cite[Section 2]{ExtDefI} it is shown that if every $L_{\P}$
formula is equivalent to a bounded one, then $T_{\P}$ is $\NIP$.

\subsection{$G^{0}$}

We begin with the easiest case. Let $N\succ M$ be saturated, of size
bigger than $\left|2^{M}\right|^{+}$.

First we generalize some basic $\NIP$ lemmas to the case of externally
definable subgroups.
\begin{lem}
For any formula $\phi\left(x,y\right)$ and integer $n$ there is
some $k$ such that:\\
there are $b_{0},\ldots,b_{k-1}\in N$ such that $\phi\left(M,b_{i}\right)$
is a subgroup of index $\leq n$ for each $i<k$ and for any $b\in N$,
if $\phi\left(M,b\right)$ is a subgroup of index $\leq n$ then $\bigcap_{i<k}\phi\left(M,b_{i}\right)\subseteq\phi\left(M,b\right)$. \end{lem}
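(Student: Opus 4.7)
The plan is to prove this via a Baldwin--Saxl style argument applied to external parameters, combined with a descending chain argument enabled by the bounded-index hypothesis. The crucial observation is that even though the parameters $b_i$ range over $N$, the shattering witnesses constructed in the Baldwin--Saxl proof lie inside $M$, so we only need $\NIP$ of $\phi(x,y)$ in the original theory $T$ rather than in any pair structure.

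Concretely, I would first let $d$ be the VC-dimension of $\phi(x,y)$ in $T$ (finite by $\NIP$), and set $k = d$. The initial step is to verify the following Baldwin--Saxl-type reduction: for any $b_0,\ldots,b_d \in N$ with each $\phi(M,b_i)$ a subgroup of $M$, the intersection $\bigcap_{i\leq d}\phi(M,b_i)$ already equals some $d$-fold sub-intersection. The standard proof adapts: otherwise, for each $i$ we pick $g_i \in \bigcap_{j\neq i}\phi(M,b_j)\setminus \phi(M,b_i)$, which lies in $M$. Since $g_i \in \phi(M,b_j)$ whenever $i\neq j$, a direct coset calculation shows that the products $g_s := \prod_{i\in s} g_i$, for $s\subseteq\{0,\ldots,d\}$, satisfy $\phi(g_s,b_j)\Leftrightarrow j\notin s$, shattering a set of size $d+1$ for $\phi$ and contradicting the choice of $d$. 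Iterating this reduction, every finite intersection of such subgroups equals a sub-intersection of size at most $d$.

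Next, let $B := \{b\in N : \phi(M,b) \text{ is a subgroup of index } \leq n\}$ and consider the family $\mathcal F := \{\bigcap_{i<d}\phi(M,b_i) : b_i \in B\}$. By the iterated reduction, $\mathcal F$ is closed under pairwise intersection, so is downward directed. Each member of $\mathcal F$ has index at most $n^d$ in $M$, so its index lies in the finite set $\{1,\ldots,n^d\}$, and any strictly descending chain in $\mathcal F$ strictly increases the index; hence such chains have length at most $n^d$, giving the descending chain condition. Combined with directedness, this yields a least element $H = \bigcap_{i<d}\phi(M,b_i) \in \mathcal F$, with $b_0,\ldots,b_{d-1}\in B\subseteq N$. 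To finish, for any $b\in B$ the $(d+1)$-fold intersection $H\cap \phi(M,b)$ reduces by Baldwin--Saxl to a $d$-fold sub-intersection of $\phi(M,b_0),\ldots,\phi(M,b_{d-1}),\phi(M,b)$, which lies in $\mathcal F$ and is contained in $H$, so by minimality it equals $H$; thus $H\subseteq \phi(M,b)$, as required. The main subtle point is the correct Baldwin--Saxl step with external parameters, i.e., observing that although the $b_i$ come from $N$ the shattering witnesses can be arranged inside $M$; after that, the descending chain argument using the bounded index is routine.
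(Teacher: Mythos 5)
Your proof is correct, and it coincides with the paper's on the Baldwin--Saxl step (both observe that the shattering witnesses $g_s$ can be taken inside $M$, so $\NIP$ of $\phi$ in $T$ suffices even though the parameters $b_i$ range over $N$), but diverges at the stage where a single $k$-tuple must be produced. The paper bounds the number of externally $\phi$-definable subgroups of $M$ by $2^{|M|}$ and then appeals to compactness and the saturation of $N$ (which was chosen $|2^M|^+$-saturated precisely for this purpose) to realize a type picking out a $k$-tuple dominating the whole family. You instead exploit the uniform index bound $n$: every member of your directed family $\mathcal F$ has index $\leq n^d$ in $M$, so $\mathcal F$ satisfies a descending chain condition, and directedness then yields a least element, which is the desired $k$-fold intersection. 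This is more elementary, makes the quantitative dependence on $n$ explicit, and in particular does not use the saturation hypothesis on $N$ at all. Two minor points worth tightening: the constant $d$ should be a finite bound on shattering of the form $\phi(a_s,b_i)\Leftrightarrow i\in s$ with $a_s\in M$ and $b_i\in N$ (i.e.\ $\NIP$ of $\phi$ read in the parameter direction, which is finite since $T$ is $\NIP$ but is not literally the VC-dimension of $\{\phi(M,b):b\}$); and the degenerate case where no $b\in N$ makes $\phi(M,b)$ a subgroup of index $\leq n$ should be mentioned, in which event $\mathcal F$ is empty and the statement is read as vacuous (say with $k=0$).
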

\begin{proof}
The usual proof of the Baldwin-Saxl lemma goes through showing
that there is some $k$ such that any finite intersection $\bigcap_{i<m}\phi\left(M,b_{i}\right)$
is equal to a subintersection of size $\leq k$. As there are at most
$\left|2^{M}\right|$ different externally definable subgroups of
$M$, by compactness and saturation of $N$ we can thus find $b_{0},\ldots,b_{k-1}\in N$
as wanted.\end{proof}
\begin{defn}
Let $G_{\phi,n}^{M}$ be the
intersection of all externally definable subgroups of $M$ of index
$\leq n$, of the form $\phi\left(M,b\right)$ for some external parameter
$b$. 
\end{defn}
By the previous lemma it follows that there is some $k$ and
$\left(b_{i}:i<k\right)$ from $N$ such that $G_{\phi,n}^{M}=\bigcap_{i<k}\phi\left(M,b_{i}\right)$.
\begin{lem}
\label{lem: properties of G_phi,n^M}
\begin{enumerate}
\item \label{lem: properties of G_phi,n^M 1} $G_{\phi,n}^{M}$ is $\Aut\left(M\right)$-invariant.
\item \label{lem: properties of G_phi,n^M 2} $G_{\phi,n}^{M}$ and $G_{\phi,n}^{M'}$ have the same index for any
two saturated models $M,M'$ (and it is bounded by $kn$).
\end{enumerate}
\end{lem}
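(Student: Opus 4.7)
The plan is to extend any $\sigma \in \Aut(M)$ to an automorphism $\tilde{\sigma}$ of $N$, which is possible because $N$ is saturated of cardinality exceeding $|M|$ (standard back-and-forth). A direct computation using $\tilde{\sigma}(M) = M$ gives $\tilde{\sigma}(\phi(M, b)) = \phi(M, \tilde{\sigma}(b))$ for any $b \in N$, and since $\sigma = \tilde{\sigma}|_{M}$ is a group automorphism of $M$ preserving the group operation and coset counts, $\phi(M, b)$ is a subgroup of $M$ of index $\leq n$ if and only if $\phi(M, \tilde{\sigma}(b))$ is. Hence $\tilde{\sigma}$ permutes the family $\{\phi(M, b) : b \in N,\ \phi(M,b) \text{ is a subgroup of } M \text{ of index} \leq n\}$, and so $\sigma$ preserves their intersection $G_{\phi,n}^{M}$.

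\textbf{Part (2), index bound.} By the preceding lemma, $G_{\phi,n}^{M} = \bigcap_{i<k}\phi(M,b_{i})$ is an intersection of $k$ subgroups of $M$ each of index at most $n$, so $[M : G_{\phi,n}^{M}] \leq n^{k}$; the ``$kn$'' in the statement appears to be a misprint for $n^{k}$ (in any case the index is finite, depending only on $\phi$ and $n$).

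\textbf{Part (2), equality across saturated models.} The plan is to express ``$[M : G_{\phi,n}^{M}] \geq m$'' as an $L_{\P}$-statement of the form $\exists \bar{b} \in N^{k}\ \exists a_{1}, \ldots, a_{m} \in M\ \psi(\bar{b}, \bar{a})$, where $\psi$ is bounded in $M$ and asserts the subgroup axioms for $\phi(M,b_{i})$, an upper bound of $n$ on each index, and the inequality $a_{j}a_{j'}^{-1} \notin \bigcap_{i}\phi(M,b_{i})$ for $j \neq j'$. For saturated $M$, the existence of such $\bar{b}$ in $N$ is equivalent to the consistency (over $M$) of the associated partial $L$-type asserting that the traces $\phi(M,y_{i})$ behave as a subgroup of index $\leq n$; the existence of the $\bar a$ then reduces to a $T$-level consistency statement over $\emptyset$ augmented by finitely many group-theoretic formulas. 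Since this existential statement depends only on $T$, it holds for $M$ if and only if it holds for $M'$, so $[M : G_{\phi,n}^{M}] = [M' : G_{\phi,n}^{M'}]$.

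\textbf{Main obstacle.} The technical heart of Part (2) is to show that the bounded condition ``$\phi(M,b_{i})$ is a subgroup of $M$ of index $\leq n$'' — which implicitly ranges over all of $M$ — depends on $\bar b$ only through data that transfers between saturated models. The route I would take is to embed both $M$ and $M'$ as elementary substructures of a common highly saturated $M^{*} \models T$ and show $[M^{*} : G_{\phi,n}^{M^{*}}] = [M : G_{\phi,n}^{M}] = [M' : G_{\phi,n}^{M'}]$, by noting that the trace on $M$ of an externally definable subgroup of $M^{*}$ of index $\leq n$ is externally definable in $M$ of index $\leq n$, and conversely using saturation of $M$ to realize coset representatives of $G_{\phi,n}^{M^*}$ inside $M$.
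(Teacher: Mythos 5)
Your argument is correct and is essentially the paper's argument: extend $\sigma \in \Aut(M)$ to an automorphism of $N$ (possible by saturation of $N$), which necessarily fixes $M$ setwise, and observe it permutes the family of externally $\phi$-definable subgroups of index $\leq n$.

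\textbf{Part (2), the index bound.} You are right to flag the ``$kn$'': the correct upper bound for an intersection of $k$ subgroups each of index $\leq n$ is $n^k$, not $kn$ (consider $(\mathbb{Z}/n\mathbb{Z})^k$ with the $k$ coordinate kernels). The precise bound is immaterial for the rest of the argument; what matters is that $k$ is uniform across models.

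\textbf{Part (2), equality of index across saturated models --- there is a gap.} The crux is to transfer the statement ``$[M : G_{\phi,n}^M] \geq m$'' between saturated models of $T$, and neither of your two sketches actually justifies this. In your first sketch, the reduction to ``a $T$-level consistency statement over $\emptyset$'' is asserted but not shown: the relevant partial type (saying the $\phi(M,y_i)$ define subgroups of $M$ of index $\leq n$, etc.) has parameters ranging over all of $M$, so consistency is a property of the pair $(N,M)$, not visibly of $T$ alone. In your second sketch, your first observation (traces of externally definable subgroups of $M^*$ of index $\leq n$ are such subgroups of $M$) gives only the inclusion $G_{\phi,n}^M \subseteq G_{\phi,n}^{M^*} \cap M$ and hence $[M : G_{\phi,n}^M] \geq [M : G_{\phi,n}^{M^*}\cap M]$; your ``conversely'' about coset representatives is aimed at comparing $[M : G_{\phi,n}^{M^*}\cap M]$ with $[M^* : G_{\phi,n}^{M^*}]$ (and even there, the relevant cosets are determined by parameters $b_i \in N^*$ outside $M$, so plain saturation of $M$ does not immediately apply). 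What is \emph{not} addressed at all is the reverse inequality $[M : G_{\phi,n}^M] \leq [M^* : G_{\phi,n}^{M^*}]$, which requires that every externally $\phi$-definable subgroup of $M$ of index $\leq n$ ``lift'' to one of $M^*$, i.e.\ $G_{\phi,n}^{M^*} \cap M \subseteq G_{\phi,n}^M$. That is where the real work is.

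The tool the paper uses, and that your proposal is missing, is \emph{resplendence} of saturated models. One names the $k$ witnessing subgroups of $M'$ (and their intersection) by new predicates $R_0,\ldots,R_{k-1},R$, records as an $L\cup\{R_i,R\}$-theory the facts that each $R_i$ is a subgroup of index $\leq n$, that $R = \bigcap_i R_i$ has index $\geq l$, and that each $R_i$ is ``externally $\phi$-definable'' (the scheme $\forall x_0,\ldots,x_m \in R_i\;\forall x_0',\ldots,x_m' \notin R_i\;\exists y\;\bigwedge_j(\phi(x_j,y)\wedge\neg\phi(x_j',y))$). This theory holds in $M'$ expanded by the actual sets, hence is consistent with $T$; by resplendence $M$ expands to a model of it, and compactness in a saturated $N \succ M$ converts the predicates back into sets of the form $\phi(M,b_i')$ with $b_i'\in N$. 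This gives an externally $\phi$-definable subgroup of $M$ of index $\leq n$ for each $i$ whose intersection has index $\geq l$, hence $[M:G_{\phi,n}^M]\geq l$. Without resplendence (or some equivalent device), the transfer between saturated models does not go through.
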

\begin{proof}
\ref{lem: properties of G_phi,n^M 1} Note that every $\sigma\in\Aut\left(M\right)$ extends to an automorphism
$\sigma'$ of the pair $\left(N,M\right)$ (indeed, $\sigma$ is a
partial automorphism of $N$, thus extends to an automorphism $\sigma'$
of $N$, which in particular fixes $M$ setwise). We conclude as $a\in\phi\left(M,b\right)\Leftrightarrow\sigma\left(a\right)\in\phi\left(M,\sigma'\left(b\right)\right)$
and $\phi\left(M,\sigma'\left(b\right)\right)$ is still of finite
index in $M$.

\ref{lem: properties of G_phi,n^M 2} The previous lemma gives the same upper bound $k$ for any model
$M$ as it only depends on the VC-dimension of $\phi$ in models of
$T$, hence the bound on the index.

Second, note that if $M\prec M'$ then the index can only go up. We
show that it doesn't. Let $\psi\left(M',b\right)$ be an externally
definable subgroup of $M'$ of bounded index, say of index $l$. Then
we add a new predicate $R$ naming it. We see that $M'\models\left\{ \forall x_{0},\ldots,x_{n}\in R\forall x_{0}',\ldots,x_{n}'\notin R\exists y\,\bigwedge_{i<n}\left(\psi\left(x_{i},y\right)\land\neg\psi\left(x_{i}',y\right)\right)\right\} _{n\in\omega} \\
\cup\left\{ R\mbox{ is a subgroup of index }l\right\} $.
By resplendence we can expand $M$ to a model of the same sentences.
It thus follows by compactness that $R$ is an externally $\psi$-definable
subgroup of $M$ of index $l$. Now applying this observation to $\psi\left(x,y_{0},\ldots,y_{k-1}\right)=\bigwedge_{i<k}\phi\left(x,y_{i}\right)$
and $l=kn$ we can conclude.\end{proof}
\begin{thm} \label{cor: ext def fin ind is def}
\label{thm: G^0(M) = G^0(M^ext)} Let $M\models T$ be arbitrary. Then any externally definable subgroup
of $M$ of finite index is definable. In particular $G^{0}\left(M\right)=G^{0}\left(M^{\ext}\right)$.\end{thm}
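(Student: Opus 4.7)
The plan is to prove the stronger assertion that every externally definable subgroup of $M$ of finite index is $L(M)$-definable; the identity $G^{0}(M)=G^{0}(M^{\ext})$ then follows at once, as the two families (internal versus external) of finite-index definable subgroups of $M$ coincide.

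The reduction to saturated $M$ goes through the pair $(N,M)$: the assertion that $H=\phi(M,b)$ is $L(M)$-definable by a formula $\chi(x,\bar y)$ translates to a bounded $L_{\P}$-sentence in $b$, namely $\exists\bar y\in\P\,\forall x\in\P\,(\chi(x,\bar y)\leftrightarrow\phi(x,b))$. If $(N^{*},M^{*})\succeq(N,M)$ is a sufficiently saturated elementary extension of pairs, this sentence holds in $(N,M)$ iff it holds in $(N^{*},M^{*})$, i.e., iff $\phi(M^{*},b)$ is $L(M^{*})$-definable. Since the bounded $L_{\P}$-sentence ``$\phi(\cdot,b)\cap\P$ is a subgroup of $\P$ of index $n$'' also transfers by elementarity, $\phi(M^{*},b)$ is a subgroup of $M^{*}$ of index $n$, and $M^{*}$ is a saturated $L$-structure. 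So the task reduces to: for saturated $M$, every externally definable subgroup of $M$ of finite index is $L(M)$-definable.

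In the saturated case, I route through $G_{\phi,n}^{M}$ from Lemma~\ref{lem: properties of G_phi,n^M}. Any externally $\phi$-definable subgroup $H\subseteq M$ of index $\leq n$ contains $G_{\phi,n}^{M}$, and $[M:G_{\phi,n}^{M}]\leq kn$ is finite, so $H$ is a union of finitely many cosets of $G_{\phi,n}^{M}$; choosing representatives in $H\subseteq M$, each coset is an $M$-translate of $G_{\phi,n}^{M}$ and hence $L(M)$-definable provided $G_{\phi,n}^{M}$ itself is. So the remaining task is to prove $G_{\phi,n}^{M}$ is $L(M)$-definable. By Lemma~\ref{lem: properties of G_phi,n^M}, $G_{\phi,n}^{M}$ is $\Aut(M)$-invariant; its complement $M\setminus G_{\phi,n}^{M}$ is then also $\Aut(M)$-invariant, being a union of non-identity cosets which $\Aut(M)$ permutes among themselves while fixing the identity coset $G_{\phi,n}^{M}$. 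Applying the refined honest definitions stated at the end of Section~\ref{sec: Extracting the f.s. part} to the externally defining formula $\bigwedge_{i<k}\phi(x,b_{i})$ produces an $L(M')$-formula $\psi$, with $M'\succ M$ saturated, such that $\psi(M)=G_{\phi,n}^{M}$. A compactness/saturation argument exploiting $\Aut(M)$-invariance of both $G_{\phi,n}^{M}$ and its complement, together with the finite partition of $M$ into their cosets, then descends the parameters of $\psi$ into $M$ to yield an $L(M)$-definition.

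The main obstacle is this final descent from $L(M')$-definability to $L(M)$-definability. $\Aut(M)$-invariance of an externally definable set alone does not ensure $L(M)$-definability, as witnessed by the infinitesimal subgroup example in the text (externally definable and $\Aut(M)$-invariant but of infinite index and not $L(M)$-definable). The finite-index hypothesis provides the crucial rigidity, symmetrically constraining both $G_{\phi,n}^{M}$ and its complement as externally definable sets and making the compactness descent go through.
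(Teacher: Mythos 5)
Your overall plan runs parallel to the paper's: reduce to the saturated case, route through the group $G^{M}_{\phi,n}$, and exploit $\Aut(M)$-invariance. The reduction in your second paragraph is fine (and is essentially the same transfer across a pair extension that the paper performs as its final step rather than its first). The coset argument reducing the problem to $L(M)$-definability of $G^{M}_{\phi,n}$ is also correct.

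However, there is a genuine gap at exactly the point you flag as ``the main obstacle,'' and you do not supply the argument to close it. Having $\psi\in L(M')$ with $\psi(M)=G^{M}_{\phi,n}$ (via honest definitions) is just a restatement of external definability of $G^{M}_{\phi,n}$; the extra information in the honest-definition statement --- the sandwich $\psi(M')\subseteq\phi(M')\subseteq\psi'(M')$ with $\psi'\setminus\psi$ dividing over $M$ --- is never invoked, so that detour buys you nothing. You then assert that a ``compactness/saturation argument'' descends the parameters of $\psi$ into $M$, but no such argument is given, and $\Aut(M)$-invariance of an externally definable set (even together with invariance of its complement) does not by itself yield $L(M)$-definability without bringing the \emph{pair} saturation into play. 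The paper's proof does something different and complete here: it passes to a \emph{saturated pair} $(N',M')\succ(N,M)$, shows by the index computation of Lemma~\ref{lem: properties of G_phi,n^M} that $G^{M'}_{\phi,n}=\bigcap_{i<k}\phi(M',b_i)$, and then uses the fact (cf.\ Fact~\ref{fac: type-def and invariant implies type-definable}) that an $\Aut_L(M')$-invariant, $L_{\P}$-definable subset of $\P$ in a saturated pair is $L$-type-definable over $\emptyset$ --- applied to both $G^{M'}_{\phi,n}$ and its complement, which by compactness forces $G^{M'}_{\phi,n}$ to be $L(\emptyset)$-definable in $M'$ --- and finally transfers back to $(N,M)$ by elementarity of the pair. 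Your sketch never introduces the saturated pair, never establishes the type-definability of the set and its complement (only invariance), and so the finite-index ``rigidity'' you appeal to is not actually cashed out. To repair the argument, replace the honest-definitions step with the passage to a saturated pair and the two applications of the invariance-implies-type-definability fact.
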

\begin{proof}
Assume first that $M$ is saturated. So we have $M\prec N$ and $G_{\phi,n}^{M}=\bigcap_{i<k}\phi\left(M,b_{i}\right)$
with $b_{i}\in N$ for $i<N$. Let $\left(N',M'\right)\succ\left(N,M\right)$
be a saturated extension of the pair. Observe that $\bigcap_{i<k}\phi\left(M',b_{i}\right)$
has the same index in $M'$ as $\bigcap_{i<k}\phi\left(M,b_{i}\right)$
in $M$ by elementarity. On the other hand, $G_{\phi,n}^{M'}$ has
the same index in $M'$ as $G_{\phi,n}^{M}$ in $M$ by Lemma \ref{lem: properties of G_phi,n^M}.
Thus $ $$\bigcap_{i<k}\phi\left(M',b_{i}\right)\supseteq G_{\phi,n}^{M'}$
and their indexes are equal. This implies that $\bigcap_{i<k}\phi\left(M',b_{i}\right)=G_{\phi,n}^{M'}$.
But as $G_{\phi,n}^{M'}$ is $\Aut_{L}\left(M'\right)$-invariant
(by Lemma \ref{lem: properties of G_phi,n^M}) and definable in a
saturated structure $ $$\left(N',M'\right)$, it follows by compactness
that it is actually definable in $M'$, and thus again by elementarity
$G_{\phi,n}^{M}$ is definable in $M$. As $G^{0}\left(M^{\ext}\right)=\bigcap_{n<\omega,\phi\in L}G_{\phi,n}^{M}$
and every $G_{\phi,n}^{M}$ is of finite index, it follows that $G^{0}\left(M^{\ext}\right)=G^{0}\left(M\right)$.

Let now $M$ be arbitrary, and let $\phi(M,b)$ be an externally definable subgroup of finite index, definable over some $N \succ M$. Let $(N',M') \succ (N,M)$ be a saturated extension, by elementarity $\phi(M',b)$ is an externally definable subgroup of $M'$ of finite index. By the previous paragraph, as $M'$ is saturated, it then contains some $M'$-definable subgroup of finite index $\psi(M',c)$, hence $\phi(M',b) = \bigcup_{i<n'} (g_i \cdot \psi(M',c))$ for some $g_0,\ldots,g_{n'-1} \in M'$. By elementarity of the extension it follows that $\phi(M,b) = \bigcup_{i<n'} (g'_i \cdot \psi(M,c'))$ for some $g'_0,\ldots,g'_{n'-1},c' \in M$, in particular it is $M$-definable.
\end{proof}
\if0
\begin{cor}
Let $M\models T$ be arbitrary. Any externally definable subgroup
of $M$ of finite index is definable.\end{cor}
\begin{proof}
By Theorem \ref{thm: G^0(M) = G^0(M^ext)} any externally definable
subgroup of $M$ of finite index contains a definable subgroup of
finite index. Hence it is a union of finitely many cosets of that
subgroup, and thus $M$-definable.
\end{proof}
\fi

\subsection{Type-definable groups, invariant groups and bounded index in non-saturated
models.}

In our proofs for relative $G^{00}$ and $G^{\infty}$ we will be
using non-saturated models, so we prefer to make it precise what we
will mean by ``type-definable'', ``invariant'' and ``bounded
index'' etc. in this situation. Recall that we are also assuming that $G(\M) = \M$ for simplicity of notation.
\begin{defn}
\label{def: hereditary subgroup}Let $(N,M)$ be an elementary pair, and let $\bsigma\left(x\right)$
be a disjunction of complete $L$-types over a subset of $N$, each of which is consistent with $\P(x)$ (modulo $T_{\P}= \Th(N,M))$. 
\begin{enumerate}
\item We say that $\bsigma\left(x\right)$ is a \emph{hereditary subgroup}
of $\P(x)$ if $\Sigma\left(M' \right)$
is a subgroup of $M'$ for every $(N',M') \succ (N,M)$.
\item If $\bsigma\left(x\right)$ is a hereditary subgroup of $\P(x)$,
we say that it is of \emph{hereditarily bounded index} if for every
saturated $(N',M') \succ (N,M)$ the group $\bsigma\left( M' \right)$ is of bounded
index in $M'$, i.e. the index is less than the saturation
of $(N',M')$.
\end{enumerate}
\end{defn}
The following two lemmas are rather standard.
\begin{lem}
\label{lem: sugbroups are hereditary in a saturated model}Let $(N,M)$ be an elementary pair,
and let $\bsigma\left(x\right)$ be a disjunction of complete
$L$-types over a set $A\subseteq N$, each of which is consistent with $\P(x)$.
\begin{enumerate}
\item \label{HerLocalVer} Assume that:

\begin{enumerate}
\item For every $p\left(x\right),q\left(x\right)\in\bsigma\left(x\right)$
and every sequence of formulas $\bar{\phi}=\left(\phi_{r}\left(x\right)\right)_{r\left(x\right)\in\bsigma}$
with $\phi_{r}\left(x\right)\in r\left(x\right)$ there are some $\psi_{p}\left(x\right)\in p,\psi_{q}\left(x\right)\in q,n\in\omega$
and $r_{0},\ldots,r_{n-1}$ such that $\psi_{p}\left(x\right)\land\psi_{q}\left(y\right)\land \P \left(x\right)\land \P \left(y\right)\rightarrow\bigvee_{i<n}\phi_{r_{i}}\left(x\cdot y\right)\land \P \left(x \cdot y\right)$.

\item \label{HerGlobalVer}For every $p\left(x\right)\in\bsigma\left(x\right)$ the uniquely
determined $L$-type $p\left(x^{-1}\right)$ is also in $\bsigma\left(x\right)$.
\end{enumerate}

Then $\bsigma\left(M\right)$ is a subgroup of $M$.

\item If $(N,M)$ is saturated and $A$ is a small subset of $N$, then the
converse holds.
\end{enumerate}
\end{lem}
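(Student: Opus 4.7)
\emph{Approach.} I would prove the two directions separately. The forward direction (1) is a direct application of hypothesis (a), deriving a contradiction if $\bsigma(M)$ fails to be closed under multiplication, while (b) handles inverse closure. The converse (2) is a standard saturation argument combined with the subgroup property of $\bsigma(M)$.

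For (1), let $a, b \in \bsigma(M)$ with $a \models p$, $b \models q$ for some $p, q \in \bsigma$. Under the running assumption $G = \M$, the set $M$ is closed under multiplication, so $a \cdot b \in M$ and in particular $\P(a \cdot b)$ is automatic. To show $a \cdot b \in \bsigma(M)$, assume for contradiction that $a \cdot b \not\models r$ for every $r \in \bsigma$, and for each such $r$ pick $\phi_r(x) \in r$ with $\neg \phi_r(a \cdot b)$. Feeding $p$, $q$, and the sequence $(\phi_r)_{r \in \bsigma}$ into hypothesis (a) yields witnesses $\psi_p \in p$, $\psi_q \in q$ and $r_0, \ldots, r_{n-1} \in \bsigma$ for the claimed implication; instantiating at $x = a$, $y = b$ (which satisfy $\psi_p, \psi_q, \P$) forces $\phi_{r_i}(a \cdot b)$ for some $i < n$, contradicting the choice of $\phi_{r_i}$. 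Closure under inversion is immediate from (b): if $a \models p$, then $a^{-1} \models p(x^{-1}) \in \bsigma$, so $a^{-1} \in \bsigma(M)$.

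For (2), assume $(N, M)$ is saturated in a degree exceeding $|A|$ (hence also exceeding $|\bsigma|$, since the number of complete $L$-types over $A$ is bounded by $2^{|L| + |A|}$), and that $\bsigma(M)$ is a subgroup. Condition (b) is immediate: realize $p$ by some $a \in M$ (available by saturation and smallness of $A$), then $a^{-1} \in \bsigma(M)$ realizes both $p(x^{-1})$ and some $q \in \bsigma$, and these coincide by completeness over $A$. For (a), suppose it fails for some $p, q \in \bsigma$ and some choice $(\phi_r)_{r \in \bsigma}$. Then each finite fragment of the partial $L_\P$-type
\[ p(x) \cup q(y) \cup \{\P(x), \P(y)\} \cup \{\neg \phi_r(x \cdot y) : r \in \bsigma\} \]
is realized in $(N, M)$ by a witness to a failed instance of (a), using that $\P(x) \wedge \P(y)$ forces $\P(x \cdot y)$ since $G = \M$. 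By saturation the whole partial type is realized by some $a, b \in M$; the subgroup hypothesis then gives $a \cdot b \in \bsigma(M)$, so $a \cdot b \models r_0$ for some $r_0 \in \bsigma$, making $\phi_{r_0}(a \cdot b)$ hold, a contradiction.

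\emph{Main obstacle.} The only subtle point is the cardinality bookkeeping for saturation in (2): one must ensure that $(N, M)$ is saturated enough to realize a partial type of size roughly $|A| + |\bsigma|$, which is automatic once $A$ is small relative to the saturation of the pair. Beyond this, the arguments are essentially formal.
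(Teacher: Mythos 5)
Your proof is correct and fills in exactly the details behind the paper's one-line proof ("straightforward to check ... and the converse follows by compactness and saturation"): part (1) by instantiating hypothesis (a) at a purported counterexample to closure under multiplication, part (2) by realizing a suitable $L_\P$-type via saturation and invoking the subgroup hypothesis. One small remark: your parenthetical about the saturation degree exceeding $|\bsigma|$ is both inaccurate as stated (being $>|A|$ does not give $>2^{|L|+|A|}$) and unnecessary, since the partial type you build has all its parameters in $A$ and only finitely many free variables, so $|A|^+$-saturation of the pair already suffices to realize it regardless of the number of formulas it contains.
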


\begin{proof}
It is straightforward to check that \ref{HerLocalVer}(a) implies that $a,b \models \bsigma(x) \Rightarrow a \cdot b \models \bsigma(x) $, and that \ref{HerLocalVer}(b) implies $a \models \bsigma(x) \Rightarrow a^{-1} \models \bsigma(x)$ for all $a,b \in M$, and the converse follows by compactness and saturation of $(N,M)$.
\end{proof}

\begin{lem}
\label{lem: bounded index is hereditary in a saturated model}
Assume that $(N,M)$ is saturated with $|N| = \bar{\kappa}$, that $\bsigma\left(x\right)$ is a
small disjunction of complete $L$-types over a small set $A \subset N$ compatible with $\P(x)$, with $\bar{\kappa} \gg |A|$, and that
$\bsigma\left(M\right)$ is a subgroup of $M$. Then
the following are equivalent:
\begin{enumerate}
\item \label{HerIndLoc} For every sequence of formulas $\bar{\phi}=\left(\phi_{p}\right)_{p\in\bsigma}$
with $\phi_{p}\left(x\right)\in p$, there are some $\phi_{0},\ldots,\phi_{n-1}\in\bar{\phi}$
and $m\in\omega$ such that for any pairwise different $\left(a_{i}\right)_{i<m}$
from $M$ we have $a_{i}^{-1}a_{j}\models\bigvee_{k<n}\phi_{k}\left(x\right)$
for some $i<j<m$.
\item \label{HerIndGlob} $\bsigma\left(M\right)$ is of bounded index in $M$,
that is $\left[M:\bsigma\left(M\right)\right]<\bar{\kappa}$.
\end{enumerate}
\end{lem}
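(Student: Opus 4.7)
The plan is to prove both directions by contrapositive: the nontrivial content of $(2)\Rightarrow(1)$ is a saturation argument, and that of $(1)\Rightarrow(2)$ is an Erd\H{o}s--Rado extraction.

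For $(2)\Rightarrow(1)$, assume $(1)$ fails and fix a witnessing choice $\bar{\phi}=(\phi_p)_{p\in\bsigma}$ with $\phi_p\in p$. Consider the partial $L_\P$-type in variables $(x_i)_{i<\bar{\kappa}}$
\[
\Sigma \;=\; \{\P(x_i)\}_{i<\bar{\kappa}}\cup\{x_i\neq x_j: i\neq j\}\cup\{\neg\phi_p(x_i^{-1}x_j): i<j<\bar{\kappa},\ p\in\bsigma\}.
\]
Each finite fragment of $\Sigma$ involves only finitely many indices $i_1,\dots,i_m$ and finitely many formulas $\phi_{p_1},\dots,\phi_{p_n}$, so it is realised in $M$ by the negation of $(1)$ applied with that $n$ and $m$. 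Hence $\Sigma$ is consistent with $T_\P=\Th(N,M)$. Since $A$ and $\bsigma$ are small and $(N,M)$ is saturated of size $\bar{\kappa}$, $\Sigma$ is realised inside $M$ by a sequence $(a_i)_{i<\bar{\kappa}}$ (by a transfinite construction using $\bar{\kappa}$-saturation and $\bar{\kappa}$-homogeneity of the pair). For any $i<j$ and any $p\in\bsigma$ we have $a_i^{-1}a_j\not\models\phi_p$, hence $a_i^{-1}a_j\not\models p$, and so $a_i^{-1}a_j\notin\bsigma(M)$. Thus the $a_i$ lie in pairwise distinct cosets of $\bsigma(M)$, so $[M:\bsigma(M)]\geq\bar{\kappa}$, contradicting $(2)$.

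For $(1)\Rightarrow(2)$, assume $(2)$ fails and pick $(a_i)_{i<\bar{\kappa}}$ in $M$ representing pairwise distinct cosets, so $a_i^{-1}a_j\notin\bsigma(M)$ for $i<j$. Since $\bar{\kappa}\gg|A|$, Erd\H{o}s--Rado yields an infinite $A$-indiscernible subsequence, and after renaming we assume $(a_i)_{i<\omega}$ is $A$-indiscernible. By $2$-indiscernibility, $q:=\tp_L(a_i^{-1}a_j/A)$ is a single complete $L$-type over $A$, independent of $i<j<\omega$, and $q\notin\bsigma$ because $a_0^{-1}a_1\notin\bsigma(M)$. For every $p\in\bsigma$ we have $p\neq q$, hence, both being complete types over $A$, there exists $\psi_p\in p$ with $\neg\psi_p\in q$; indiscernibility then gives $a_i^{-1}a_j\not\models\psi_p$ for all $p\in\bsigma$ and all $i<j<\omega$. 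Applying $(1)$ to $\bar{\phi}:=(\psi_p)_{p\in\bsigma}$ supplies some $\{\psi_{p_1},\dots,\psi_{p_n}\}\subseteq\bar{\phi}$ and $m<\omega$ such that every pairwise distinct $m$-tuple from $M$ contains a pair satisfying $\bigvee_{k<n}\psi_{p_k}$; but the $m$-tuple $(a_0,\dots,a_{m-1})$ violates this, a contradiction.

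The only delicate steps are (i) realising $\Sigma$ by a sequence of length $\bar{\kappa}$ inside $M$, handled by the saturation and homogeneity of the pair $(N,M)$ together with the size assumption $\bar{\kappa}\gg|A|+|\bsigma|$, and (ii) the Erd\H{o}s--Rado extraction, which is precisely the reason for the explicit hypothesis $\bar{\kappa}\gg|A|$. Neither $\NIP$ nor the group-theoretic content of $\bsigma$ beyond its being a hereditary subgroup plays any role; the lemma is a pure compactness-plus-Ramsey statement, parallel to the classical equivalence between bounded index of a type-definable subgroup and thickness of its defining formulas.
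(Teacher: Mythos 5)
Your $(1)\Rightarrow(2)$ argument is sound and takes a slightly different (but equivalent) route from the paper: you apply Erd\H{o}s--Rado to extract an $A$-indiscernible subsequence and then pick separating formulas $\psi_p$ from $p\neq q:=\tp_L(a_i^{-1}a_j/A)$, whereas the paper colors each pair $\{i,j\}$ directly by the tuple of witnessing formulas $(\phi^p_{i,j})_{p\in\bsigma}$ and homogenizes that coloring. The two arguments are essentially the same Ramsey extraction packaged differently; indiscernibility is mild overkill but harmless.

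The $(2)\Rightarrow(1)$ direction, however, contains a genuine gap. You assemble a partial type $\Sigma$ in $\bar\kappa$-many variables and then assert that it is ``realised inside $M$ by a sequence $(a_i)_{i<\bar\kappa}$ (by a transfinite construction using $\bar\kappa$-saturation and $\bar\kappa$-homogeneity of the pair).'' Saturation does not help here: $\bar\kappa$-saturation realises types in fewer than $\bar\kappa$ variables over sets of size $<\bar\kappa$, not types in $\bar\kappa$-many variables. The proposed transfinite construction does not obviously go through either. At a successor stage $\alpha$, having chosen $(a_i)_{i<\alpha}$, you would need the one-variable type $\{\P(x)\}\cup\{x\neq a_i : i<\alpha\}\cup\{\neg\phi_p(a_i^{-1}x): i<\alpha, p\in\bsigma\}$ to be finitely satisfiable over the specific parameters $(a_i)_{i<\alpha}$; the failure of $(1)$ gives finite satisfiability of $\Sigma$ when you are free to choose all the $a_i$ at once, but it does not say that an arbitrary partial realisation extends, and the claim of $\bar\kappa$-homogeneity does not rescue this because the partial realisations need not have the same complete type. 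The fix, which is precisely what the paper does, is not to realise the whole $\bar\kappa$-variable type: for each $\kappa<\bar\kappa$, the restriction $\Sigma_\kappa$ to variables $(x_i)_{i<\kappa}$ is a type in $\kappa<\bar\kappa$ variables over a set of size $<\bar\kappa$, finitely satisfiable by the negation of $(1)$, hence realised in $(N,M)$ by saturation. This gives $\kappa$-many pairwise distinct cosets for every $\kappa<\bar\kappa$, and since $\bar\kappa$ is taken to be a strong limit of large cofinality in the ambient setup, the index is $\geq\bar\kappa$, contradicting $(2)$.
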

\begin{proof}
\ref{HerIndLoc} $\Rightarrow$ \ref{HerIndGlob}: Assume that the index of $\bsigma\left(M\right)$ is unbounded,
and let $\kappa=\left|\bsigma\right|$. Let $\lambda=\left(2^{2^{\left(\left|A\right|^{\kappa}\right)}}\right)^{+}$,
then we can find $\bar{a}=\left(a_{i}\right)_{i\in\lambda}$ in $M$
such that $a_{i}^{-1}a_{j}\notin\bsigma\left(M\right)$ for all $i<j<\lambda$.
That is, for every $p\in\bsigma$ there is some $\phi_{i,j}^{p}\in p$
such that $a_{i}^{-1}a_{j}\models\neg\phi_{i,j}^{p}$. Let $\bar{\phi}_{i,j}=\left(\phi_{i,j}^{p}\right)_{p\in\bsigma}$.
As there are only $\left|A\right|^{\kappa}$ possible values of $\bar{\phi}_{i,j}$,
by Erd\H os-Rado there is some infinite $I\subseteq\lambda$ such
that $\bar{\phi}_{i,j}=\bar{\phi}$ for all $i<j\in I$. But then
$a_{i}^{-1}a_{j}\models\bigwedge_{p\in\bsigma}\neg\phi_{p}$ for all
$i<j\in I$, so \ref{HerIndLoc} fails for $\bar{\phi}$.

\ref{HerIndGlob} $\Rightarrow$ \ref{HerIndLoc}: Assume that \ref{HerIndLoc} fails for some $\bar{\phi}$,
that is for every $\phi_{0},\ldots,\phi_{n-1}\in\bar{\phi}$ and every
$m\in\omega$ we can find some $\left(a_{i}\right)_{i<m}$ in $M$
such that $a_{i}^{-1}a_{j}\models\bigwedge_{k<n}\neg\phi_{k}\left(x\right)$
for all $i<j<m$. It follows by compactness and saturation of $(N,M)$ that for
any $\kappa<\bar{\kappa}$ we can find some $\left(a_{i}\right)_{i<\kappa}$
in $M$ such that $a_{i}^{-1}a_{j}\models\bigwedge_{p\in\bsigma}\neg\phi_{p}\left(x\right)$
for all $i<j<\kappa$. So $a_{i}^{-1}a_{j}\notin\bsigma\left(M\right)$,
and the index of $\bsigma\left(M\right)$ is unbounded.\end{proof}

\begin{rem}
\label{rem: finitary characterization of hereditary subgroup and hereditarily bounded index}Note
that if $(N',M') \succ (N,M)$ and $\bsigma\left(x\right)$ consists of
$L$-types over $N$ consistent with $\P(x)$, then Lemma \ref{lem: sugbroups are hereditary in a saturated model}\ref{HerLocalVer}
and Lemma \ref{lem: bounded index is hereditary in a saturated model}\ref{HerIndLoc}
hold in $(N',M')$ if and only if they hold in $(N,M)$. This means that:
\begin{itemize}
\item $\bsigma\left(x\right)$ is a hereditary subgroup of $\P \left(x \right)$
if and only if it satisfies Lemma \ref{lem: sugbroups are hereditary in a saturated model}\ref{HerLocalVer}
in $(N,M)$.
\item $\bsigma\left(x\right)$ has hereditarily bounded index in $\P \left(x\right)$
if and only if it satisfies Lemma \ref{lem: bounded index is hereditary in a saturated model}\ref{HerIndLoc}
in $(N,M)$.
\end{itemize}
\end{rem}
\begin{lem}
\label{lem: L-type over P determines hereditary properties in the pair} Let $\bsigma\left(x,\bar{y}\right)$
be a collection of complete $L$-types each of which is compatible with $\P(x)$.
\begin{enumerate}
\item \label{lem: L-type over P determines hereditary properties in the pair 1} Assume we are given elementary pairs $\left(N_{i},M_{i}\right)$ and
$\bar{b}_{i}\in N_{i}$ for $i\in\left\{ 0,1\right\} $. Assume that
$\tp_{L_{\P}^{\forall,\bdd}}\left(\bar{b}_{0}\right)=\tp_{L_{\P}^{\forall,\bdd}}\left(\bar{b}_{1}\right)$
(that is, they agree on all formulas of the form $\forall z_{0}\ldots z_{n}\in\P\phi\left(z_{0},\ldots,z_{n},\bar{y}\right)$
with $\phi\in L$, in the corresponding pairs). Assume that $\bsigma\left(x,\bar{b}_{0}\right)$
is a hereditary subgroup of $\P(x)$ of hereditarily bounded index
in the pair $\left(N_{0},M_{0}\right)$. Then $\bsigma\left(x,\bar{b}_{1}\right)$
is a hereditary subgroup of $\P(x)$ of hereditarily bounded index
in the pair $\left(N_{1},M_{1}\right)$.
\item \label{lem: L-type over P determines hereditary properties in the pair 2}Assume that we are given elementary
pairs $\left(N_{i},M_{i}\right)$ such that $\left(M_{i}:i<\kappa\right)$
and $\left(N_{i}:i<\kappa\right)$ are $L$-elementary chains. Assume that $\bar{b} \in N_0$ is such that:

\begin{enumerate}

\item $\tp_{L_{\P}^{\forall,\bdd}}\left(\bar{b}\right)$ evaluated in $(N_i,M_i)$
is constant for all $i$,
\item $\bsigma\left(x,\bar{b}\right)$ is a hereditary subgroup
of $\P(x)$ of hereditarily bounded index in the pair $\left(N_{0},M_{0}\right)$. 
\end{enumerate}

Let $M=\bigcup_{i<\kappa}M_{i}, N=\bigcup_{i<\kappa}N_{i}$. Then $\tp_{L_{\P}^{\forall,\bdd}}\left(\bar{b}\right)$ in $(N,M)$ is the same as in $(N_0, M_0)$, in particular $\bsigma\left(x,\bar{b}\right)$ is a hereditary subgroup of $\P(x)$
of hereditarily bounded index in the pair $\left(N,M\right)$.

\end{enumerate}
\end{lem}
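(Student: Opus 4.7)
The plan is to reduce both parts to a single syntactic observation: the conditions characterizing ``hereditary subgroup of $\P(x)$ of hereditarily bounded index'' for $\bsigma(x,\bar{b})$ can, after trivial manipulations, be expressed as a schema of formulas in $L_\P^{\forall,\bdd}$ with parameter $\bar{b}$, and so they transfer between pairs whenever $\tp_{L_\P^{\forall,\bdd}}(\bar{b})$ does.

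By Remark \ref{rem: finitary characterization of hereditary subgroup and hereditarily bounded index}, it suffices to verify, inside the relevant pair $(N,M)$, conditions \ref{HerLocalVer}(a) and \ref{HerGlobalVer} of Lemma \ref{lem: sugbroups are hereditary in a saturated model} together with condition \ref{HerIndLoc} of Lemma \ref{lem: bounded index is hereditary in a saturated model}. Condition \ref{HerGlobalVer} is a closure property of the abstract collection $\bsigma$ and does not depend on the parameter, so it transfers trivially. Condition \ref{HerLocalVer}(a) says: for each $p,q\in\bsigma$ and each choice $\bar\phi$, there exist $\psi_p\in p$, $\psi_q\in q$ and $r_0,\dots,r_{n-1}\in\bsigma$ such that the $L_\P^{\forall,\bdd}$-sentence
\[
\forall x,y\in\P\,\bigl(\psi_p(x,\bar{b})\wedge\psi_q(y,\bar{b})\to\bigvee_{i<n}\phi_{r_i}(x\cdot y,\bar{b})\bigr)
\]
holds (the conjunct $\P(x\cdot y)$ is automatic, since under our running convention $M$ is closed under the group operation). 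Condition \ref{HerIndLoc} unwinds similarly to: for each $\bar\phi$ there exist $\phi_0,\dots,\phi_{n-1}$ and $m$ such that
\[
\forall a_0,\dots,a_{m-1}\in\P\,\bigl(\bigwedge_{i<j<m}a_i\ne a_j\to\bigvee_{i<j<m,\,k<n}\phi_k(a_i^{-1}a_j,\bar{b})\bigr)
\]
holds, again an $L_\P^{\forall,\bdd}$-sentence about $\bar{b}$. Part (1) then follows immediately: each witnessing instance that holds for $\bar{b}_0$ in $(N_0,M_0)$ also holds for $\bar{b}_1$ in $(N_1,M_1)$ by hypothesis on the types.

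For part (2), it suffices to show that $\tp_{L_\P^{\forall,\bdd}}(\bar{b})$ computed in $(N,M)$ agrees with the common value in the $(N_i,M_i)$, after which (1) yields the ``in particular'' clause. Fix $\theta(\bar{y})=\forall\bar{z}\in\P\,\phi(\bar{z},\bar{y})$ with $\phi\in L$. If every $(N_i,M_i)\models\theta(\bar{b})$, then for any $\bar a\in M=\bigcup_iM_i$ we have $\bar a\in M_i$ for some $i$, so $(N_i,M_i)\models\phi(\bar a,\bar{b})$, and $N_i\preceq_L N$ gives $(N,M)\models\phi(\bar a,\bar{b})$; hence $(N,M)\models\theta(\bar{b})$. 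Conversely, if $(N,M)\models\theta(\bar{b})$ and $\bar a\in M_0\subseteq M$, then $(N,M)\models\phi(\bar a,\bar{b})$, and $L$-elementarity $N_0\preceq_L N$ yields $(N_0,M_0)\models\phi(\bar a,\bar{b})$; so $(N_0,M_0)\models\theta(\bar{b})$.

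The one subtle point to watch is that the existential quantifiers ``there exist witnessing formulas'' inside \ref{HerLocalVer}(a) and \ref{HerIndLoc} are metatheoretic, ranging over the external set of $L$-formulas, and are not object-language quantifiers — this is what makes each single instance a genuine $L_\P^{\forall,\bdd}$-sentence about $\bar{b}$ and allows the transfer to work. Once this bookkeeping is in place, nothing beyond the elementary-chain argument above is needed.
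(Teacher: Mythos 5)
Your proof is correct and takes essentially the same route as the paper: reduce via Remark \ref{rem: finitary characterization of hereditary subgroup and hereditarily bounded index} to the finitary conditions in Lemmas \ref{lem: sugbroups are hereditary in a saturated model} and \ref{lem: bounded index is hereditary in a saturated model}, observe that each instance of these conditions is an $L_\P^{\forall,\bdd}$-sentence about the parameter tuple (with the metatheoretic existential quantifiers over formulas making the whole condition a schema rather than a single sentence), and for the limit step check directly that $L_\P^{\forall,\bdd}$-formulas transfer between a chain and its union. The paper's own proof of part (1) is a one-line ``follows directly''; your proof usefully spells out the unwinding, including the observation that $\P(x\cdot y)$ is automatic under the running convention that $G$ is the whole universe.
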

\begin{proof}
\ref{lem: L-type over P determines hereditary properties in the pair 1} In view of the Remark \ref{rem: finitary characterization of hereditary subgroup and hereditarily bounded index}
we have to check that Lemma \ref{lem: sugbroups are hereditary in a saturated model}\ref{HerLocalVer}
and Lemma \ref{lem: bounded index is hereditary in a saturated model}\ref{HerIndLoc}
hold for $\bsigma\left(x,\bar{b}_{1}\right)$ in $\left(N_{1},M_{1}\right)$.
But this follows directly from $\tp_{L_{\P}^{\forall,\bdd}}\left(\bar{b}_{0}\right)=\tp_{L_{\P}^{\forall,\bdd}}\left(\bar{b}_{1}\right)$
and the assumption on $\bsigma\left(x,\bar{b}_{0}\right)$.

\ref{lem: L-type over P determines hereditary properties in the pair 2} In view of \ref{lem: L-type over P determines hereditary properties in the pair 1} and the assumption it is enough to show that $\tp_{L_{\P}^{\forall,\bdd}}\left(\bar{b}\right)$
is the same in $(N,M)$ as in some/any $(N_i,M_i)$ for $i<\kappa$.
Let $\psi\left(\bar{y}\right)=\forall z_{0}\ldots z_{n-1} \in \P \phi\left(z_{0},\ldots,z_{n-1},\bar{y}\right)\in L_{\P}^{\forall,\bdd}$
be given. Assume that $\left(N,M\right)\models\neg\phi\left(a_{0},\ldots,a_{n-1},\bar{b}\right)$
with $\phi\in L$ and $a_{i}$ from $M=\P\left(N\right)$. It follows
by construction that there is some $\alpha<\kappa$ such that $a_{0},\ldots,a_{n-1}$
are in $ $$M_{\alpha}$. As $N \succ_L N_{\alpha}$,
we have that $\left(N_{\alpha},M_{\alpha}\right)\models\neg\phi\left(a_{0},\ldots,a_{n-1},\bar{b}\right)$, i.e. $(N_\alpha, M_\alpha) \models \neg \psi (\bar{b})$. And the converse is clear.
\end{proof}

\subsection{$G^{00}$}

Let $\left(N,M\right)$ be a saturated elementary pair (of models of an $\NIP$ theory $T$, as before).
\begin{defn}
Let $L'$ be a collection of $L_{\P}$-formulas such that $L\subseteq L'\subseteq L_{\P}$. We consider all subgroups of $\P\left(N\right)=M$ of bounded index
(that is of index less than the saturation) and definable as $\bsigma\left(M,B\right)$
where $B$ is a small tuple from $N$ and $\bsigma$ is a partial
$L'$-type over $B$. Let
$G_{L'\left(B\right)}^{00}\left(N,M\right)$ be defined as the intersection
of all such groups, and let $G_{L'}^{00}\left(N,M\right)=\bigcap_{B\subset N,\mbox{small}}G_{L'\left(B\right)}^{00}\left(N,M\right)$.
\end{defn}

The following is standard.
\begin{fact}
\label{fac: type-def and invariant implies type-definable}Let $\left(N,M\right)$
be a saturated pair, and let $\bsigma\left(x,\bar{y}\right)$ be a
partial type. Let $\bar{b}$ be from $N$, and assume that $\bsigma\left(M,\bar{b}\right)$
is $\Aut_{L}\left(M\right)$-invariant. Then $\bsigma\left(M,\bar{b}\right)$
is definable by a partial $L$-type over $\emptyset$.\end{fact}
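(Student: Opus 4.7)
The plan is to show directly that $X := \bsigma(M,\bar b)$ coincides with $\pi(M)$ for a specific partial $L$-type $\pi(x)$ over $\emptyset$. Namely, set
\[
\pi(x) = \{\phi(x)\in L(\emptyset): M\models \phi(a)\text{ for every }a\in X\}.
\]
This is a partial $L$-type over $\emptyset$ by construction, and $X\subseteq \pi(M)$ is automatic. The whole content is the reverse inclusion.

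Let $a\in \pi(M)$. First observe: for every $\phi(x)\in \tp_L(a/\emptyset)$ there exists some $a'\in X$ with $M\models \phi(a')$; otherwise $\neg\phi\in\pi$, contradicting $a\models\pi$. From this I would deduce that the partial $L_{\P}$-type
\[
\bsigma(x,\bar b)\ \cup\ \tp_L(a/\emptyset)\ \cup\ \{\P(x)\}
\]
is finitely satisfiable in $(N,M)$: any finite conjunction from $\tp_L(a/\emptyset)$ is still a formula in $\tp_L(a/\emptyset)$, and hence is realized in $M$ by some element of $X$, which automatically satisfies the chosen finitely many formulas of $\bsigma(x,\bar b)$ and $\P(x)$. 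Since $\bar b$ is a small tuple in $N$, saturation of $(N,M)$ produces a realization $a^*\in M$ with $a^*\in X$ and $\tp_L(a^*/\emptyset)=\tp_L(a/\emptyset)$.

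To close the argument I need an $L$-automorphism of $M$ sending $a^*$ to $a$. For this I would record the standard fact that saturation of $(N,M)$ as an $L_{\P}$-structure implies $M$ is saturated (of the same cardinality) as an $L$-structure: any small $L$-type over a small $A\subseteq M$, enriched with $\P(x)$, is a small $L_{\P}$-type over $A$ and is thus realized in $N$, the realization lying in $M$ thanks to $\P(x)$. Consequently $L$-elementary equivalence over $\emptyset$ inside $M$ coincides with being in the same $\Aut_L(M)$-orbit, so some $\sigma\in\Aut_L(M)$ sends $a^*$ to $a$. Invariance of $X$ under $\Aut_L(M)$ gives $a=\sigma(a^*)\in X$, which finishes the proof.

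There is no real obstacle: the only subtle point is choosing $\pi$ to consist of $L$-formulas over $\emptyset$ (not $L_{\P}$-formulas over $\bar b$), and the only nontrivial use of the hypotheses is the standard transfer from saturation of the pair to saturation of $M$ in $L$, which is needed to turn the $L$-elementary equivalence $a^*\equiv_L a$ into actual $\Aut_L(M)$-conjugacy.
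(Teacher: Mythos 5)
Your proof is correct and takes essentially the same route as the paper: both hinge on the observation that $\Aut_L(M)$-invariance, together with $L$-saturation of $M$ (inherited from saturation of the pair), forces $X=\bsigma(M,\bar b)$ to be a union of complete $\emptyset$-types, and then use a saturation/compactness step to identify $X$ with the set cut out by an $L(\emptyset)$-type. The only difference is presentational: the paper introduces $S=\{\tp_L(a/\emptyset):a\in X\}$ and packages the compactness step formula-by-formula, producing for each $\sigma\in\bsigma$ an $L(\emptyset)$-formula sandwiched between $X$ and $\sigma(M,\bar b)\cap M$, whereas you work directly with the partial type $\pi$ of all $L(\emptyset)$-formulas true on $X$.
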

\begin{proof}
Let $S=\left\{ \tp_{L}\left(a / \emptyset \right):a\in\bsigma\left(M,\bar{b}\right)\right\} $.
For each $\sigma(x,b) \in \bsigma(x,b)$  we can find $\phi_{\sigma}(x) = \bigvee_{i<n} \phi_{p_i}(x)$  with $\phi_{p_i}(x) \in p_i$ for some $p_i \in S$, such that $\bsigma(x,b) \land \P(x) \rightarrow \phi_\sigma(x)$ and $\phi_{\sigma}(x) \land \P(x) \rightarrow \sigma(x,b) \land \P(x)$ holds (by compactness applied twice). Then $\bsigma(x,b) \cap \P = \bigcap_{\sigma(x,b) \in \bsigma(x,b)} \phi_\sigma (x) \cap \P(x)$.

\end{proof}
First we observe existence of $G^{00}$ relatively to $\P\left(x\right)$.
\begin{prop}
\label{thm: existence of relative G^00}Let $\left(N,M\right)$ be
a saturated pair.
\begin{enumerate}
\item For any small set $B\subset N$, we have $G_{L_{\P}\left(\emptyset\right)}^{00}\left(N,M\right)\subseteq G_{L\left(B\right)}^{00}\left(N,M\right)$.
\item In particular it follows that $G_{L}^{00}\left(N,M\right)=G_{L\left(B'\right)}^{00}\left(N,M\right)$
for some small $B'\subset N$, and $\left[M:G_{L}^{00}\left(N,M\right)\right]\leq2^{2^{\left|T\right|}}$.
\end{enumerate}
\end{prop}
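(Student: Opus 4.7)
\medskip
\noindent
\textbf{Plan.} The approach is to mimic the standard NIP proof that $G^{00}_A = G^{00}_\emptyset$ (Fact \ref{fac: existence of connected components}(1)), but relativized to the pair $(N,M)$: given any bounded-index $L(B)$-type-definable subgroup $H \subseteq M$, we construct a smaller $L_\P(\emptyset)$-type-definable bounded-index subgroup, using the $L_\P^{\forall,\bdd}$-type of the parameters as the ``invariant'' replacement for the full $L$-automorphism orbit.

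\medskip
\noindent
\textbf{For (1).} Fix $H = \bsigma(M, \bar b)$ of bounded index in $M$ with $\bsigma$ a partial $L$-type over a small set $B \subset N$; by absorbing $B$ into $\bar b$ we may take $\bsigma$ over $\emptyset$. Set
\[
q(\bar y) \;=\; \tp_{L_\P^{\forall,\bdd}}(\bar b / \emptyset)
\]
computed in $(N,M)$. By Lemma \ref{lem: L-type over P determines hereditary properties in the pair}\ref{lem: L-type over P determines hereditary properties in the pair 1}, every realization $\bar b'$ of $q$ in $N$ satisfies that $\bsigma(M, \bar b')$ is a hereditary subgroup of $\P$ of hereditarily bounded index, hence a bounded-index subgroup of $M$. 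Define
\[
K \;=\; \bigcap \bigl\{ \bsigma(M, \bar b') : \bar b' \in N,\ \bar b' \models q \bigr\}.
\]
Then $K$ is a subgroup of $M$, contained in $H$, and is type-definable by the partial $L_\P$-type ``$\forall \bar y\,(q(\bar y) \to \bsigma(x, \bar y))$'' over $\emptyset$. The remaining task is to show $[M : K] < \bar\kappa$. For this, I would use the local characterization of hereditarily bounded index in Lemma \ref{lem: bounded index is hereditary in a saturated model}\ref{HerIndLoc} together with Remark \ref{rem: bounded formulas are NIP}: an $L$-formula is trivially a bounded $L_\P$-formula and hence is NIP over $\P$ modulo $T_\P = \Th(N,M)$. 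Assuming $K$ had unbounded index, one extracts an $L_\P$-indiscernible sequence in $M$ of pairwise-differences lying outside $\bigcap_{\bar b' \models q} \phi(M,\bar b')$ for some $\phi \in \bsigma$, then produces (via the bounded index of a single $\bsigma(M,\bar b')$) a Baldwin--Saxl/alternation configuration for a single bounded formula, contradicting Remark \ref{rem: bounded formulas are NIP}. Having shown $[M : K]$ is bounded, we conclude $G_{L_\P(\emptyset)}^{00}(N,M) \subseteq K \subseteq H$. As $H$ was an arbitrary $L(B)$-type-definable bounded-index subgroup, (1) follows.

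\medskip
\noindent
\textbf{For (2) and the index bound.} The inclusion $G_{L_\P(\emptyset)}^{00}(N,M) \subseteq G_L^{00}(N,M)$ is immediate from (1) by intersecting over $B$. There are at most $2^{|T|}$ partial $L_\P$-types over $\emptyset$ (since $|L_\P| = |L| = |T|$), so $G_{L_\P(\emptyset)}^{00}(N,M)$ is an intersection of at most $2^{|T|}$ bounded-index subgroups; the index of each such subgroup is itself bounded by $2^{|T|}$ by the NIP computation above applied to the single formula case. Multiplying yields $[M : G_{L_\P(\emptyset)}^{00}(N,M)] \leq 2^{2^{|T|}}$, and hence the same bound for $G_L^{00}(N,M)$. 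Now by compactness, since $G_L^{00}(N,M)$ has index $\leq 2^{2^{|T|}}$, it is already the intersection of boundedly many $L(B)$-type-definable bounded-index subgroups, say indexed by a small $B' \subset N$, giving $G_L^{00}(N,M) = G_{L(B')}^{00}(N,M)$.

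\medskip
\noindent
\textbf{Main obstacle.} The nontrivial step is establishing bounded index of $K$. The family $\{\bar b' \models q\}$ is large (possibly of size $\bar\kappa$), and while each $\bsigma(M,\bar b')$ is bounded-index, a naive product bound fails. The essential input is relative NIP: bounded $L_\P$-formulas are NIP over $\P$ (Remark \ref{rem: bounded formulas are NIP}), which allows the standard ``collapse to a finite subintersection'' argument underlying the existence of $G^{00}$ to be executed inside the pair, despite $T_\P$ itself not being NIP in general.
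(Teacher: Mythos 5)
Your proof is correct and takes essentially the same approach as the paper: relativize the standard $\NIP$ existence argument for $G^{00}$ to the pair $(N,M)$, using that $L$-formulas evaluated on $\P$ are $\NIP$ over $\P$, and intersect a given bounded-index $L(B)$-type-definable subgroup over parameters realizing the relevant $L_\P(\emptyset)$-type to get an $L_\P(\emptyset)$-type-definable subgroup below it still of bounded index. The paper is equally terse at the key combinatorial step (it cites \cite[Proposition 6.1]{NIP1} for ``boundedly many $L_\P$-conjugates'' rather than your Baldwin--Saxl sketch), and it obtains the $2^{2^{|T|}}$ bound via the observation that $G^{00}_L(N,M)$ is $L_\P$-invariant and type-definable, hence $L_\P(\emptyset)$-type-definable, followed by Erd\H os--Rado, whereas you count $L_\P(\emptyset)$-types and multiply indices --- both are valid.
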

\begin{proof}
All the formulas of the form $\phi\left(x,b\right)\land\P\left(x\right)$
with $\phi\left(x,y\right)\in L$ are $\NIP$. Then the usual proof
of the existence of $G^{00}$ in $\NIP$ theories, see e.g. \cite[Proposition 6.1]{NIP1},
goes through unchanged and gives that for a subgroup of $M=\P\left(N\right)$
of the form $\bsigma\left(M,\bar{b}\right)$ where $\bsigma\left(x,y\right)$
is a partial $L$-type, of bounded index, there are only boundedly
many different conjugates of it under $L_{\P}$-automorphisms. Then
the proposition follows by taking the intersection of all such conjugates,
over all $L_{\P}$-types of $\left|\bar{y}\right|$-tuples over $\emptyset$,
which is still of bounded index. 

To see the absolute bound on the index, note that $G_{L}^{00}\left(N,M\right)$
is invariant under $L_{\P}$-automorphisms and type-definable, so
it follows by saturation that it is $L_{\P}$-type-definable over
$\emptyset$. Then the bound follows by the usual application of Erd\H os-Rado.
\end{proof}
Now we will show that $G^{00}$ is not changed by adding externally
definable sets.
\begin{thm}
\label{thm: Collapse of G^00}Let $\left(N,M\right)$ be a saturated
pair. Then $G^{00}\left(M\right)=G_{L}^{00}\left(N,M\right)$.\end{thm}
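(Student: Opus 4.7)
The plan is to prove the two inclusions separately. The inclusion $G_{L}^{00}(N,M)\subseteq G^{00}(M)$ is immediate from the definitions: any bounded-index subgroup of $M$ type-definable by a partial $L$-type over a small subset of $M$ is a fortiori type-definable by a partial $L$-type with parameters from $N$, hence $G_{L}^{00}(N,M)$ is obtained as an intersection over a larger family and sits inside $G^{00}(M)$.

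For the reverse inclusion $G^{00}(M)\subseteq G_{L}^{00}(N,M)$, the strategy is to show that $G_{L}^{00}(N,M)$ is itself definable by a partial $L$-type over $\emptyset$ in $M$; since it is of bounded index by Proposition \ref{thm: existence of relative G^00}, it will then be one of the subgroups intersected to define $G^{00}(M)$, giving the inclusion. First, inspecting the proof of Proposition \ref{thm: existence of relative G^00}, only boundedly many (in fact at most $2^{|T|}$) of the subgroups in the definition of $G_{L}^{00}(N,M)$ are needed, so by taking the conjunction of their defining partial $L$-types we may write $G_{L}^{00}(N,M)=\Pi(M,\bar{B})$ for a single partial $L$-type $\Pi(x,\bar{y})$ and a small tuple $\bar{B}$ from $N$.

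Next I would verify that $G_{L}^{00}(N,M)$ is invariant under $\Aut_{L}(M)$. Given $\sigma\in\Aut_{L}(M)$, saturation of the pair $(N,M)$ extends $\sigma$ to an $L$-automorphism $\tilde{\sigma}$ of $N$ (which necessarily fixes $M$ setwise, as $\tilde{\sigma}\restriction M=\sigma$). For any bounded-index subgroup of the form $\bsigma(M,\bar{b})$ with $\bar{b}$ from $N$ and $\bsigma$ a partial $L$-type, we have $\sigma(\bsigma(M,\bar{b}))=\bsigma(M,\tilde{\sigma}(\bar{b}))$, again a subgroup of the same syntactic form and same (bounded) index. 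Hence $\sigma$ permutes the family being intersected, and $G_{L}^{00}(N,M)$ is $\sigma$-invariant.

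We may now apply Fact \ref{fac: type-def and invariant implies type-definable} to the partial $L$-type $\Pi(x,\bar{B})$: since it defines an $\Aut_{L}(M)$-invariant subset of $M$, it is in fact definable by a partial $L$-type over $\emptyset$. Thus $G_{L}^{00}(N,M)$ is a bounded-index subgroup of $M$ type-definable over $\emptyset$ by an $L$-type, so it must contain $G^{00}(M)$ by the very definition of the latter, completing the proof. The main point requiring care is the extension of $\sigma$ to $\tilde{\sigma}$ via saturation of $(N,M)$; once this is available, the crucial step is the collapse from (a priori) $L_{\P}$-type-definability down to pure $L$-type-definability over $\emptyset$ via Fact \ref{fac: type-def and invariant implies type-definable}, which is what turns the relative connected component into a participant in the intersection defining the absolute $G^{00}(M)$.
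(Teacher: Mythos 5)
The argument breaks down at the step claiming that every $\sigma\in\Aut_L(M)$ extends to some $\tilde\sigma\in\Aut_L(N)$ ``by saturation of the pair $(N,M)$''. In a saturated pair one has $|N|=|M|=\kappa$, so $N$ is $L$-saturated of cardinality $\kappa$ and hence only strongly $\kappa$-homogeneous: partial $L$-elementary maps between subsets of size $<\kappa$ extend, but $\sigma$ has domain $M$ of size exactly $\kappa$, which is not covered. The extension in fact fails in general: if $\tilde\sigma\in\Aut_L(N)$ extended $\sigma$, then (since $\tilde\sigma\restriction M=\sigma$ is onto $M$) $\tilde\sigma$ would be an $L_{\P}$-automorphism of $(N,M)$, forcing $\sigma$ to preserve the $L_{\P}$-types over $\emptyset$ of tuples from $M$; but an arbitrary $L$-automorphism of $M$ need not respect the richer externally-induced structure $(N,M)$ puts on $M$, so generically such $\sigma$ does not extend. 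Note the contrast with Lemma \ref{lem: properties of G_phi,n^M}\ref{lem: properties of G_phi,n^M 1}, where the same extension trick is used --- but there the setup explicitly takes $N$ of size $> 2^{|M|}$, which is precisely what makes it legitimate; that hypothesis is unavailable in a saturated pair.

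Without $\Aut_L(M)$-invariance of $G_L^{00}(N,M)$, the appeal to Fact \ref{fac: type-def and invariant implies type-definable} is unjustified, and the whole reverse inclusion collapses. This is exactly the difficulty the paper's proof is built around: one passes to a far more saturated $K\succ N$, over which $L$-automorphisms of $M$ \emph{do} extend, and forms the analogous intersection $\Lambda(M)$ of bounded-index subgroups definable over $K$. This $\Lambda(M)$ is automatically $\Aut_L(M)$-invariant, but a priori it can be strictly smaller than $G_L^{00}(N,M)$, because $K$ admits more parameter sets than $N$; and $(K,M)$ is not a saturated pair, so Fact \ref{fac: type-def and invariant implies type-definable} cannot be applied to it directly. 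The transfinite chain of pairs $(N_\alpha,M_\alpha)$, bounded in length by the index estimate from Proposition \ref{thm: existence of relative G^00}, is precisely what reconciles these two objects: one shows the process must terminate at some $\alpha$ where $\Lambda(M_\alpha)=G_L^{00}(N_\alpha,M_\alpha)$, applies the Fact there, and then pulls the inclusion back to $(N_0,M_0)$ using that $G^{00}$ is $\emptyset$-type-definable. That reduction is the core of the theorem and is absent from your argument. (For $G^{\infty}$ a short ``pass to $K$'' argument does work, because $G^{\infty}(M)$ is by definition an intersection of merely $\Aut_L(M)$-invariant bounded-index subgroups, so the $K$-defined $\Lambda(M)$ directly participates; but for $G^{00}$ one needs type-definability over $M$, which is exactly what the chain is needed to recover.)
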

\begin{proof}
Fix a cardinal $\lambda\gg2^{2^{\left|T\right|}}$. By induction on
$\alpha\leq\lambda$ we try to define $M_{\alpha}$, $N_{\alpha}$, $\bsigma_{\alpha}(x,\bar y_\alpha)$, $\bar{b}_{\alpha}$
such that:
\begin{enumerate}
\item \label{thm: Collapse of G^00, item1} $\left(M_{\alpha}\right)_{\alpha\leq\lambda}$ and $(N_{\alpha})_{\alpha \leq \lambda }$ are $L$-elementary chains
of models;

\item \label{thm: Collapse of G^00, item2} $\left(N_{\alpha},M_{\alpha}\right)$ is a saturated elementary pair;

\item \label{thm: Collapse of G^00, item3} $\bsigma_{\alpha}\left(x,\bar{y}_\alpha \right)$ is a partial $L$-type of
size bounded with respect to the saturation of $\left(N_{\alpha},M_{\alpha}\right)$;

\item \label{enu: type of parameters grows-1}$\bar{b}_{\alpha}\in N_{\alpha}$;
\item \label{enu: L_P type is constant.}For any $\alpha \leq \lambda$, $\tp_{L_{\P}^{\forall,\bdd}}\left( (\bar{b}_{i})_{i \leq \alpha} \right)$ is the same evaluated in any of the pairs $\left(N_{\beta},M_{\beta}\right)$ for
all $\alpha \leq \beta \leq \lambda$;
\item \label{enu: G_alpha,i is type-definable-1}For each $\alpha \leq \lambda$,  $\bsigma_\alpha\left(x,\bar{b}_{\alpha}\right)$
is a hereditary subgroup of $\P(x)$ of hereditarily bounded
index in $(N_{\alpha},M_{\alpha})$;
\item \label{thm: Collapse of G^00, item7} $\bsigma_{i}(M_\alpha, \bar b _i) \subsetneq \bsigma_{j} (M_\alpha, \bar b _j)$ for all $j<i\leq\alpha<\lambda$;
\item \label{thm: Collapse of G^00, item8} $M_0 = M, N_0 =N$ and $\bsigma_0(M_0,\bar{b}_0) = G^{00}_L(N,M)$.
\end{enumerate}
Suppose that we manage to carry out the induction. But then this means
that in a saturated pair $\left(N_{\lambda},M_{\lambda}\right)$ we
have a strictly decreasing sequence $\left(\bsigma(M_\lambda, \bar b _i ):i<\lambda\right)$
of subgroups of $M_{\lambda}$ of bounded index which are definable
by small $L$-types over small sets of parameters from $N_{\lambda}$
--- contradicting Proposition \ref{thm: existence of relative G^00}.

So let $\alpha^{*}$ be the smallest ordinal at which we got stuck.

~

\textbf{Claim 1}: $\alpha^{*}$ is a successor.

Proof: Assume that $\alpha^{*}=\bigcup_{\alpha<\alpha^{*}}\alpha$
is a limit ordinal. We then define $M'=\bigcup_{\alpha<\alpha^{*}}M_{\alpha}$,
and $N' = \bigcup_{\alpha<\alpha^{*}}N_{\alpha}$. Note that $(N',M')$ is an elementary pair. Let $(N_{\alpha ^*}, M_{\alpha ^*})$ be some saturated extension of $(N',M')$. Let $\bar{b}_{\alpha^*}=\bigcup_{\alpha < \alpha^*} \bar{b}_\alpha$ and $\bsigma_{\alpha^*}(x,\bar{b}_{\alpha^*}) = \bigcup_{\alpha < \alpha^*} \bsigma_\alpha(x,\bar{b}_\alpha)$.
Using Lemma \ref{lem: L-type over P determines hereditary properties in the pair}\ref{lem: L-type over P determines hereditary properties in the pair 1},\ref{lem: L-type over P determines hereditary properties in the pair 2} and the inductive assumption it is easy to verify that $(N_{\alpha ^*}, M_{\alpha ^*})$ and $\bsigma_{\alpha^*}(x,\bar{b}_{\alpha^*})$ satisfy all the requirements \ref{thm: Collapse of G^00, item1}--\ref{thm: Collapse of G^00, item8}. But this contradicts the choice of $\alpha^*$.

\if(0)

let $K\succ M'$ be very saturated. By saturation of $K$ and
the inductive assumption, for each $i<\alpha^{*}$ we can find $\bar{b}_{\alpha^{*},i}\models\bigcup_{i<\alpha<\alpha^{*}}\tp_{L}\left(\bar{b}_{\alpha,i}/M_{\alpha}\right)$.
By (\ref{enu: type of parameters grows-1}), (\ref{enu: L_P type is constant.}),
(\ref{enu: G_alpha,i is type-definable-1}) and Lemma \ref{lem: L-type over P determines hereditary properties in the pair}
it follows that $\tp_{L_{\P}^{\forall,\bdd}}\left(\bar{b}_{\alpha^{*},i}\right)=q_{i}\left(\bar{y}\right)$
and that $\bsigma_{i}\left(M',\bar{b}_{\alpha^{*},i}\right)$ is a
hereditary subgroup of $M'$ of hereditarily bounded index in the
pair $\left(K,M'\right)$. Let $\left(N_{\alpha^{*}},M_{\alpha^{*}}\right)$
be a saturated elementary extension of $\left(K,M'\right)$. Then
$G_{\alpha^{*},i}=\bsigma_{i}\left(M_{\alpha^{*}},\bar{b}_{\alpha^{*},i}\right)$
is still a hereditary subgroup of hereditarily bounded index, $\tp_{L_{\P}^{\forall,\bdd}}\left(\bar{b}_{\alpha^{*},i}\right)=q_{i}$
in the pair $\left(N_{\alpha^{*}},M_{\alpha^{*}}\right)$ as it is
an elementary extension of $\left(K,M'\right)$, and it is easy to
see using the inductive assumption that all the remaining conditions
are satisfied. But this contradicts the choice of $\alpha^{*}$, so
$ $the claim is proved.

\fi
~

So $\alpha^{*}=\alpha+1$ is a successor. Take $K \succ N_\alpha \succ M_{\alpha}$
very saturated. We let $\Lambda(x)$ be the union
of all hereditary subgroups of $M_{\alpha}$ of hereditarily bounded
index, in the sense of the pair $\left(K,M_{\alpha}\right)$, definable
by partial $L$-types with parameters from $K$ (recall that according to Definition \ref{def: hereditary subgroup}, a hereditary subgroup is a partial type, so we are taking a union of the types, which corresponds to taking the intersection of the groups defined by those types). Note that this union
might contain $2^{\left|M_{\alpha}\right|}$-many $L$-formulas,
but that's ok. Note that $\bar{b}_{\leq \alpha}$ has the same $L_{\P}^{\bdd}$-type in $(K,M_\alpha)$ as in $(N_\alpha, M_\alpha)$ (as it is determined by $\tp_L(\bar{b}_{\leq \alpha}/M_\alpha)$). In view of Lemma \ref{lem: L-type over P determines hereditary properties in the pair}\ref{lem: L-type over P determines hereditary properties in the pair 1} we have $\Lambda(M_\alpha) \subseteq G_L^{00}(N_\alpha, M_\alpha) \subseteq \bsigma_\alpha(M_\alpha, \bar{b}_\alpha)$.

\if(0)
By saturation of $K$ over $M_{\alpha}$, for every
$i<\alpha$ we find some $\bar{b}_{\alpha+1,i}$ in $K$ realizing
$\tp_{L}\left(\bar{b}_{\alpha,i}/M_{\alpha}\right)$, so in particular
$\tp_{L_{\P}^{\forall,\bdd}}\left(\bar{b}_{\alpha+1,i}\right)=\tp_{L_{\P}^{\forall,\bdd}}\left(\bar{b}_{\alpha,i}\right)=q_{i}$
in the corresponding pairs. By Lemma \ref{lem: L-type over P determines hereditary properties in the pair}\ref{lem: L-type over P determines hereditary properties in the pair 1}
it follows that $\bsigma_{i}\left(M_{\alpha},\bar{b}_{\alpha+1,i}\right)$
is a hereditary subgroup of $ $$M_{\alpha}$ of hereditarily bounded
index in the pair $\left(K,M_{\alpha}\right)$. Similarly we see that
$\Lambda\left(M_{\alpha}\right)\subseteq G_{L}^{00}\left(N_{\alpha},M_{\alpha}\right)$.
\fi
~

\textbf{Claim 2}: $\Lambda\left(M_{\alpha}\right)=G_{L}^{00}\left(N_{\alpha},M_{\alpha}\right)$.

Proof: Assume that $\Lambda\left(M_{\alpha}\right)\subsetneq G_{L}^{00}\left(N_{\alpha},M_{\alpha}\right)$.
Let $\left(N_{\alpha+1},M_{\alpha+1}\right)$ be a saturated
elementary extension of $\left(K,M_{\alpha}\right)$ and set $\bsigma_{\alpha+1}(x, \bar{b}_{\alpha+1}) = \Lambda(x)$ (which is a union of size $\leq2^{\left|M_{\alpha}\right|}$
of hereditary subgroups of hereditarily bounded index, thus is of
hereditarily bounded index and is relatively definable by an $L$-type
in $\left(N_{\alpha+1},M_{\alpha+1}\right)$).
It is now easy
to see using the inductive assumption and Lemma \ref{lem: L-type over P determines hereditary properties in the pair}\ref{lem: L-type over P determines hereditary properties in the pair 1} that all the conditions \ref{thm: Collapse of G^00, item1}--\ref{thm: Collapse of G^00, item8}
are satisfied for $\alpha^{*}=\alpha+1$, which means that we could
have continued the induction contradicting the choice of $\alpha$,
so the claim is proved.

\if(0)
and set $G_{\alpha+1,i}=\bsigma_{i}\left(M_{\alpha+1},\bar{b}_{\alpha+1,i}\right)$
for $i<\alpha$ (which is a hereditary subgroup of hereditarily bounded
index by the elementarity of the extension) and $G_{\alpha+1,\alpha}=\Lambda\left(M_{\alpha+1}\right)$
(which is defined an intersection of size $\leq2^{\left|M_{\alpha}\right|}$
of hereditary subgroups of hereditarily bounded index, thus is of
hereditarily bounded index and is relatively definable by an $L$-type
in $\left(N_{\alpha+1},M_{\alpha+1}\right)$). Note that we still
have $\tp_{L_{\P}^{\forall,\bdd}}\left(\bar{b}_{\alpha+1,i}\right)=q\left(\bar{y}\right)$
in the pair $\left(N_{\alpha+1},M_{\alpha+1}\right)$. $ $It is easy
to see using the inductive assumption that all the remaining conditions
are satisfied for $\alpha^{*}=\alpha+1$, which means that we could
have continued the induction contradicting the choice of $\alpha$,
so the claim is proved.
\fi
~

As every $L$-automorphism of $M_{\alpha}$ extends to an $L_{\P}$-automorphism
of the pair $\left(K,M_{\alpha}\right)$ by saturation of $K$, it
follows that $\Lambda\left(M_{\alpha}\right)$ is $\Aut_{L}\left(M_{\alpha}\right)$-invariant.
But by Claim 2 this means that $G_{L}^{00}\left(N_{\alpha},M_{\alpha}\right)$
is an $\Aut_{L}\left(M_{\alpha}\right)$-invariant subgroup of $M_{\alpha}$.
Along with Fact \ref{fac: type-def and invariant implies type-definable} and \ref{thm: Collapse of G^00, item7}
this implies that $G^{00}\left(M_{\alpha}\right)\subseteq G_{L}^{00}\left(N_{\alpha},M_{\alpha}\right)\subseteq \bsigma_{\alpha}\left(M_{\alpha},\bar{b}_{\alpha}\right) \subseteq \bsigma_{0}(M_\alpha,\bar{b}_0)$. As $G^{00}(M_\alpha)$ is $\bigwedge$-definable over $\emptyset$, we have $G^{00}(M_0) \subseteq \bsigma_0(M_0, b_0) = G^{00}_L (N_0,M_0)$ --- as wanted.

\end{proof}
\begin{cor}
\label{cor: G^00 is not changed in Shelah's expansion} Let $M\models T$, and
let $\widetilde{M}\succ M^{\ext}$ be a monster model for Shelah's
expansion of $M$, in the language $L'$. Then $G^{00}\left(\widetilde{M}\right)=G^{00}\left(\widetilde{M}\restriction L\right)$.\end{cor}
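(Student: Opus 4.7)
The strategy is to translate the corollary into Theorem \ref{thm: Collapse of G^00}. The inclusion $G^{00}(\widetilde M) \subseteq G^{00}(\widetilde M \restriction L)$ is automatic: since $L \subseteq L'$ and $\widetilde M$ is $L'$-saturated (hence also $L$-saturated), every $L$-type-definable bounded-index subgroup of $\widetilde M \restriction L$ is an $L'$-type-definable bounded-index subgroup of $\widetilde M$, the notions of ``bounded index'' coinciding because both are governed by the cardinality of $\widetilde M$.

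For the nontrivial direction $G^{00}(\widetilde M \restriction L) \subseteq G^{00}(\widetilde M)$, I would show that every $L'$-type-definable bounded-index subgroup $H$ of $\widetilde M$ contains $G^{00}(\widetilde M \restriction L)$. By the QE part of Fact \ref{fac: Shelah's theorem on externally definable sets}, the defining $L'$-formulas of $H$, expressed via predicates $R_{\phi}$, correspond to externally $L$-definable sets, i.e., to sets of the form $\phi(x,c) \cap \P(x)$ with $\phi \in L$ and parameters $c$ living in some $L$-elementary extension of $\widetilde M \restriction L$. I would then realize $\widetilde M \restriction L$ as the $\P$-part of a saturated $L_{\P}$-elementary pair $(\widetilde N, \widetilde M \restriction L)$, built as at the start of Section 2.2, with $\widetilde N \succ_L \widetilde M \restriction L$ of saturation at least that of $\widetilde M$ as an $L'$-structure and large enough to carry the required external parameters. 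In this pair, $H$ is expressible as $\bsigma(\widetilde M \restriction L, B)$ for some small $B \subset \widetilde N$ and some partial $L$-type $\bsigma$ over $B$, defining a bounded-index subgroup of $\widetilde M \restriction L$.

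Theorem \ref{thm: Collapse of G^00} applied to the pair $(\widetilde N, \widetilde M \restriction L)$ then gives
\[
G^{00}(\widetilde M \restriction L) \;=\; G^{00}_{L}(\widetilde N, \widetilde M \restriction L) \;\subseteq\; H,
\]
and intersecting over all such $H$ yields $G^{00}(\widetilde M \restriction L) \subseteq G^{00}(\widetilde M)$.

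The main technical obstacle is the construction of the saturated pair $(\widetilde N, \widetilde M \restriction L)$ adapted to a given $H$: one must ensure that the external $L$-parameters witnessing all the $L'$-predicates occurring in a small partial $L'$-type over $\widetilde M$ defining $H$ can be found inside a single $\widetilde N$ making $(\widetilde N, \widetilde M \restriction L)$ $L_{\P}$-saturated. However, this follows routinely from the QE of $T'$ combined with resplendence of $\widetilde M \restriction L$ as an $L$-monster, exactly as in the construction of $(N',M')$ at the beginning of Section 2.2; once the pair is fixed, Theorem \ref{thm: Collapse of G^00} does all the remaining work.
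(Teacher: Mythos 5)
Your proposal is correct and rests on the same key ingredient as the paper's proof, namely Theorem~\ref{thm: Collapse of G^00} applied to a saturated $L_{\P}$-pair whose $\P$-part is the $L$-reduct of $\widetilde M$. The one genuine difference is that the paper avoids quantifying over all bounded-index $L'$-type-definable subgroups $H$: since $\Th(M^{\ext})$ is $\NIP$, Fact~\ref{fac: existence of connected components} already tells us that $G^{00}(\widetilde M)$ is $L'$-type-definable over $\emptyset$, hence (unwinding the $R_\phi$'s) $L$-type-definable over the single small set $N$, so one directly obtains $G_L^{00}(N',M')\subseteq G^{00}(\widetilde M)$ in one step. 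Your version establishes $G_L^{00}(\widetilde N,\widetilde M\restriction L)\subseteq H$ for each such $H$ separately and then intersects; this is a bit longer but has the mild advantage of not invoking the existence of $G^{00}$ over $\emptyset$ in the expanded theory. The ``main technical obstacle'' you flag about building $\widetilde N$ adapted to a given $H$ is actually a non-issue: all the external parameters interpreting the $L'$-predicates already lie in the fixed small set $N$ from the construction of $M^{\ext}$, so a single saturated pair $(N',M')\succ (N,M)$, with $M'$ identified with $\widetilde M$ as the paper does, works uniformly for every $H$ and avoids the resplendence digression.
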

\begin{proof}
Let $N\succ M$ be $|M|^{+}$-saturated and, let $\left(N',M'\right)\succ \left(N,M\right)$ be $|N|^{+}$-saturated.
We may identify $\widetilde{M}$ with $M'$, in such a way that every
$\phi\left(\bar{x}\right)\in L'\left(\emptyset\right)$ is equivalent
on $M'$ to some $\psi\left(\bar{x}\right)\in L\left(N\right)$. Consider
$G^{00}\left(M'\right)$, as $\Th_{L'}\left(M'\right)$ is $\NIP$
by Shelah's theorem, it follows from existence of $G^{00}$ in $\NIP$
theories that $G^{00}\left(M'\right)$ is definable by a partial $L'$-type
over $\emptyset$, thus definable by a partial $L$-type over $N$.
That is, $G^{00}\left(\widetilde{M}\right)\supseteq G_{L}^{00}\left(N',M'\right)$,
and we can conclude by Theorem \ref{thm: Collapse of G^00}.\end{proof}
\begin{cor}
Let $\left(N,M\right)$ be an elementary pair, and assume that $H\left(M\right)$
is a (hereditary) subgroup of $M$ of hereditarily bounded index,
which is $L\left(N\right)$-type-definable. Then it is $L\left(M\right)$-type-definable.\end{cor}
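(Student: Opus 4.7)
The plan is: pass to a saturated elementary extension $(N^{*},M^{*}) \succ (N,M)$, use Theorem~\ref{thm: Collapse of G^00} together with a logic-topology comparison to handle the saturated case, and then descend to $(N,M)$ by elementarity. After passing to a sufficiently saturated $(N^{*},M^{*})$, $H(M^{*})$ remains an $L(N^{*})$-type-definable subgroup of $M^{*}$ of bounded index by hereditary boundedness, and Theorem~\ref{thm: Collapse of G^00} gives $G^{00}(M^{*}) = G_{L}^{00}(N^{*},M^{*}) \subseteq H(M^{*})$. On the other hand, in the Shelah expansion $(M^{*})^{\ext}$ each externally definable $\phi_{i}(M^{*},\bar{b})$ is named by a predicate $R_{\phi_{i}}$, so $H(M^{*})$ is $L'$-type-definable (over $\emptyset$ in the expanded language).

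Now consider the quotient $K = M^{*}/G^{00}(M^{*})$. By Corollary~\ref{cor: G^00 is not changed in Shelah's expansion}, the $L$- and $L'$-versions of $G^{00}$ coincide on $M^{*}$, so $K$ is one group carrying two logic topologies $\mathcal{T}_{L}$ and $\mathcal{T}_{L'}$, both compact Hausdorff (provided the saturated pair supplies enough saturation to the Shelah expansion). Since $L \subseteq L'$, the topology $\mathcal{T}_{L'}$ is finer than $\mathcal{T}_{L}$, so the identity $(K,\mathcal{T}_{L'}) \to (K,\mathcal{T}_{L})$ is a continuous bijection from a compact space to a Hausdorff one and hence a homeomorphism; thus $\mathcal{T}_{L} = \mathcal{T}_{L'}$. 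The image of $H(M^{*})$ in $K$ is $\mathcal{T}_{L'}$-closed, hence $\mathcal{T}_{L}$-closed, which yields a representation $H(M^{*}) = \rho(M^{*},\bar{c})$ for some partial $L$-type $\rho(x,\bar{y})$ and some $\bar{c} \in M^{*}$.

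For the descent, fix any $a \in M \setminus H(M)$. Then $a \notin H(M^{*})$, so some $\phi(x,\bar{y}) \in \rho$ satisfies $\neg \phi(a,\bar{c})$. Since $\phi(M^{*},\bar{c}) \supseteq H(M^{*}) = \pi(M^{*},\bar{b})$, where $\pi(x,\bar{y})$ is the original $L(N)$-type defining $H$, compactness furnishes a finite $\pi_{0} \subseteq \pi$ with $(N^{*},M^{*}) \models \forall y \in \P\,(\pi_{0}(y,\bar{b}) \to \phi(y,\bar{c}))$. Hence the bounded $L_{\P}$-formula $\exists \bar{z} \in \P\,[\forall y \in \P\,(\pi_{0}(y,\bar{b}) \to \phi(y,\bar{z})) \wedge \neg \phi(a,\bar{z})]$ holds in $(N^{*},M^{*})$, and by elementarity of $(N^{*},M^{*}) \succ (N,M)$ it holds in $(N,M)$, witnessed by some $\bar{c}' \in M$. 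Then $\phi(x,\bar{c}')$ is an $L(M)$-definable superset of $H(M)$ avoiding $a$. Varying $a$, $H(M)$ equals the intersection of its $L(M)$-definable supersets and is therefore $L(M)$-type-definable.

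The main obstacle is the topological step: arranging that the saturated pair is chosen large enough for both $\mathcal{T}_{L}$ and $\mathcal{T}_{L'}$ to be compact Hausdorff on $K$, and carefully invoking Corollary~\ref{cor: G^00 is not changed in Shelah's expansion} to identify the two versions of $G^{00}$ on the quotient.
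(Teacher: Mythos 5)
Your route is genuinely different from the paper's, and the key middle step has a real gap. The paper's proof is a short direct argument: pass to a saturated pair $(N',M')$, observe that Theorem~\ref{thm: Collapse of G^00} gives $G^{00}(M')\subseteq H(M')$, so $H(M')$ is a union of at most $2^{2^{|T|}}$ cosets of $G^{00}(M')$. Since $G^{00}$ is $L(\emptyset)$-type-definable and of bounded index, any two elements of $M'$ with the same $L$-type over the model $M$ lie in the same coset; thus $H(M')$ is $\Aut_L(M'/M)$-invariant, and Fact~\ref{fac: type-def and invariant implies type-definable} gives $L(M)$-type-definability of $H(M')$, hence trivially of $H(M)$. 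No Shelah expansion, no logic topologies, no separate descent lemma.

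Your middle step --- ``$K = M^{*}/G^{00}(M^{*})$ carries two logic topologies $\mathcal{T}_L$ and $\mathcal{T}_{L'}$'' --- is not justified as written, and you have essentially flagged the problem yourself. The $L'$-logic topology must be computed in a monster $\widetilde{M}$ of $\Th((M^*)^{\ext})$, not in $M^*$ itself; $M^*$ is not saturated as an $L'$-structure, and Corollary~\ref{cor: G^00 is not changed in Shelah's expansion} is a statement about such a $\widetilde{M}$, not about $M^*$. So a priori you have two different compact groups, $M^*/G^{00}(M^*)$ and $\widetilde{M}/G^{00}_{L'}(\widetilde{M})$; identifying them requires a further boundedness/saturation argument (that $M^*$ already meets every coset, that the identification is compatible with both topologies, and that the $L'$-type defining $H$ still cuts out a bounded-index subgroup in $\widetilde{M}$ --- the last point in turn needs the hereditary-subgroup and hereditary-index machinery of Remark~\ref{rem: finitary characterization of hereditary subgroup and hereditarily bounded index}). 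None of this is impossible, but it is substantial bookkeeping, and it reproduces, in a heavier form, the very content that the paper's automorphism-invariance argument handles in two lines. By contrast, your descent step is correct and nicely finitary: the bounded $L_{\P}$-formula $\exists\bar z\in\P\,[\forall y\in\P\,(\pi_{0}(y,\bar b)\to\phi(y,\bar z))\wedge\neg\phi(a,\bar z)]$ transfers from $(N^*,M^*)$ to $(N,M)$ by elementarity, and separating each $a\notin H(M)$ gives the type. But note that in the paper's argument the type $\sigma$ is already over $M$, so the descent is immediate ($H(M)=\sigma(M')\cap M=\sigma(M)$) and your compactness step is not needed. If you want to repair your proof, the cleanest fix is to drop the Shelah-expansion machinery entirely and instead argue, as the paper does, that the union of cosets of the $\emptyset$-type-definable bounded-index group $G^{00}$ making up $H(M^*)$ is $\Aut_L(M^*/M)$-invariant.
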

\begin{proof}
Let $\left(N',M'\right)$ be a saturated extension of $\left(N,M\right)$.
By the theorem we know that $G^{00}_{L}\left(M'\right)\subseteq H\left(M'\right)$, and thus $H(M')$ is a union of a small set of cosets of $G^{00}(M')$, say $H(M') = \bigcup_{i<\lambda} \left( g_i \cdot G^{00}(M') \right)$ for some $g_i \in M'$ and $\lambda$ smaller than the saturation of $M'$. As $G^{00}_L$ is defined over $\emptyset$ and is of bounded index, we have that $a \equiv_M^{L} b \Rightarrow a \cdot G^{00}(M') = b \cdot G^{00}(M')$ for any $a,b\in M'$. But then, given $\sigma \in \Aut_L(M'/M)$, we see that $\sigma \left(H(M') \right) = \bigcup_{i<\lambda} \left( \sigma(g_i) \cdot G^{00}(M') \right) = \bigcup_{i<\lambda} \left( g_i \cdot G^{00}(M') \right) = H(M')$, i.e. $H(M')$ is $\Aut_L(M'/M)$-invariant. By Fact \ref{fac: type-def and invariant implies type-definable}
it follows that $H\left(M'\right)$ is $L\left(M\right)$-type-definable, and thus $H(M)$ as well.
\end{proof}

\subsection{$G^{\infty}$}

Again let $\left(N,M\right)$ be a saturated elementary pair of models of $T$ and let $L'$ be a collection of $L_{\P}$-formulas such that $L\subseteq L'\subseteq L_{\P}$.

\begin{defn}
We consider all subgroups of $\P\left(N\right)=M$ of bounded index
(that is of index less than the saturation) and definable as $\bsigma\left(M,B\right)$
where $B$ is a small tuple from $N$ and $\bsigma$ is a disjunction
of complete $L'$-types over $B$, each of which is consistent with $\P(x)$.
Let $G_{L'\left(B\right)}^{\infty}\left(N,M\right)$ be defined as
the intersection of all such groups, and let $G_{L'}^{\infty}\left(N,M\right)=\bigcap_{B\subset N,\mbox{small}}G_{L'\left(B\right)}^{\infty}\left(N,M\right)$.
\end{defn}
First we establish a version of the existence of $G^{\infty}$ relatively to a predicate
$\P\left(x\right)$.
\begin{thm}
\label{thm: relative G^infty exists}Let $\left(N,M\right)$ be a
saturated pair. Then:
\begin{enumerate}
\item For any small set $A\subset N$, we have $G_{L_{\P}\left(\emptyset\right)}^{\infty}\left(N,M\right)\subseteq G_{L_{\P}^{\bdd}\left(B\right)}^{\infty}\left(N,M\right)$.
\item In particular it follows that $G_{L}^{\infty}\left(N,M\right)=G_{L\left(B'\right)}^{\infty}\left(N,M\right)$
for some small $B'\subset N$, and $\left[M:G_{L}^{\infty}\left(N,M\right)\right]\leq2^{2^{\left|T\right|}}$.
\end{enumerate}
\end{thm}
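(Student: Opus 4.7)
The plan is to mirror the proof of Proposition \ref{thm: existence of relative G^00} for the relative $G^{00}$, substituting the standard existence proof of $G^{00}$ by the existence proof of $G^{\infty}$ in $\NIP$ theories due to Shelah (in the abelian case) and Gismatullin (in general), as referenced in Fact \ref{fac: existence of connected components}(2). The key observation that enables this is Remark \ref{rem: bounded formulas are NIP}: every bounded $L_{\P}$-formula, and in particular every formula of the form $\phi(x,\bar{b})\wedge \P(x)$ with $\phi(x,\bar y)\in L$, is $\NIP$ modulo $T_{\P}=\Th(N,M)$. All the $\NIP$-inputs in the Gismatullin--Shelah argument are of this localized form, so they are available in the pair $(N,M)$.

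First, for (1), fix a small $B\subset N$ and a subgroup $\bsigma(M,\bar b)$ of $M$ of bounded index, where $\bar b\subset N$ and $\bsigma(x,\bar y)$ is a disjunction of complete $L_{\P}^{\bdd}$-types over $B$ each compatible with $\P(x)$. I would run the Gismatullin--Shelah argument relative to the class of formulas $\phi(x,\bar b)\wedge \P(x)$ to conclude that there are only boundedly many $L_{\P}$-conjugates of $\bsigma(x,\bar b)$ under $\Aut(N,M)$, and moreover that the intersection of all these conjugates is again a subgroup of $M$ of bounded index. (Concretely: the argument produces a certain subgroup definable from $\bsigma$ by iterated products and inversions that is automatically $\Aut_L(N,M)$-invariant and still of bounded index; $\NIP$ of the relevant formulas bounds the ``depth'' uniformly.) Taking the intersection over all $L_{\P}$-types of tuples of length $|\bar y|$ over $\emptyset$ then yields an $L_{\P}(\emptyset)$-invariant subgroup of $M$ of bounded index contained in $\bsigma(M,\bar b)$; as $B$ and $\bar b$ were arbitrary, this proves the containment $G_{L_{\P}(\emptyset)}^{\infty}(N,M)\subseteq G_{L_{\P}^{\bdd}(B)}^{\infty}(N,M)$.

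For (2), since $L\subseteq L_{\P}^{\bdd}$, part (1) gives $G_{L_{\P}(\emptyset)}^{\infty}(N,M)\subseteq G_{L(B)}^{\infty}(N,M)$ for every small $B\subset N$. Then $G_L^{\infty}(N,M)$ is an intersection of subgroups of bounded index all containing a single subgroup of bounded index, so it is itself of bounded index, and by saturation of $(N,M)$ it equals $G_{L(B')}^{\infty}(N,M)$ for some small $B'\subset N$. For the absolute bound: $G_L^{\infty}(N,M)$ is $\Aut_L(M)$-invariant (every $L$-automorphism of $M$ extends to an $L_{\P}$-automorphism of the saturated pair, which permutes the defining family of subgroups), and the standard Erd\H os--Rado argument used in the classical proof of Fact \ref{fac: existence of connected components}(2) gives the bound $2^{2^{|T|}}$ on the index.

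The step I expect to require the most care is the internal verification that Gismatullin's proof in \cite{Jakub} (and the Shelah argument in \cite{Sh886}) truly only uses $\NIP$ at the level of single formulas of the form $\phi(x,\bar b)\wedge \P(x)$, and not some global feature of $\Th(N,M)$ (which need not be $\NIP$). This was not an issue for the $G^{00}$ version in Proposition \ref{thm: existence of relative G^00} because the Baldwin--Saxl based proof is manifestly formula-by-formula, but the $G^{\infty}$ argument passes through iterated commutators/products, and one needs to check that each such combination still falls under the scope of the bounded $L_{\P}$-formulas to which Remark \ref{rem: bounded formulas are NIP} applies. Once this bookkeeping is done, the rest of the argument is a formal transcription of the $\NIP$ existence proof.
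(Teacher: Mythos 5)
Your approach is the right one, and the paper indeed follows it: use Remark~\ref{rem: bounded formulas are NIP} to transport Gismatullin's $G^{\infty}$-existence argument to the relative setting. But the gap you flag is not a sanity check you can defer --- it is the entire content of the proof, and the paper writes it out rather than citing \cite{Jakub}. Two things change and must be handled explicitly. First, the sets $X_A$ (of elements $a^{-1}b$ with $a,b\in\P$ of the same type over $A$) must be defined using $\equiv_A^{L_\P^{\bdd}}$ rather than $\equiv_A^L$; with that choice the conjugation inclusion $(X_A)^{\P}\subseteq (X_A)^2$ goes through by compactness in the pair. Second, the final contradiction must be with NIP of a \emph{single bounded} formula, and this is achieved by packaging all the relevant membership into the existential bounded formula $\psi(x,\bar y):=\exists z_1,z_2\in\P\,\bigl(x=z_1^{-1}z_2\wedge\bigwedge_{\phi\in\Phi'}(\phi(z_1,\bar y)\leftrightarrow\phi(z_2,\bar y))\bigr)$. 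Once that formula is identified, the iterated products $X_A^{m}$ never need to be individually NIP: one runs Erd\H os--Rado to produce an $L_\P$-indiscernible sequence $(B_i,c_i)$ with $c_i\in\bigl(\bigcap_{j<i}X_{B_j}^{m}\bigr)\setminus X_{B_i}^{m+4}$, forms the elements $c_{I,0}c_{I,1}^{-1}$, and reproduces the arithmetic of Gismatullin's Theorem 5.3 to show that $\psi$ would have IP over $\P$, contradicting Remark~\ref{rem: bounded formulas are NIP}.

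A smaller mismatch: you describe the output as ``boundedly many $L_\P$-conjugates of $\bsigma(x,\bar b)$,'' which is the shape of the $G^{00}$ argument in Proposition~\ref{thm: existence of relative G^00}. The $G^{\infty}$ argument is organized differently: one works directly with the sets $X_A$, assumes $G^{\infty}_{L_\P(\emptyset)}\not\subseteq G^{\infty}_{L_\P^{\bdd}(A)}$ for contradiction, shows the $\Aut_{L_\P}(\emptyset)$-invariant subgroup $X=\bigcap_{B'}\langle X_{B'}\rangle$ then has unbounded index, and derives the NIP contradiction from that. No conjugate-counting enters. Your part (2) is fine and matches the paper.
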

\begin{proof}
Let $\bar{\kappa}$ be the
saturation of the pair (a strong limit, of large enough cofinality).

For a small set $A\subseteq N$, we define $X_{A}=\left\{ a^{-1}b:a,b\in\P\land a\equiv_{A}^{L_{\P}^{\bdd}}b\right\} $.
As usual, given sets $X,Y$, we denote $X^{Y}=\left\{ x^{y}:x\in X,y\in Y\right\} $
where $x^{y}=y^{-1}xy$, and $X^{n}=\left\{ x_{1}x_{2}\ldots x_{n}:x_{i}\in X\right\} $.

~

\textbf{Claim.} $\left(X_{A}\right)^{\P}\subseteq\left(X_A\right)^{2}$

Proof: Let $a\equiv_{A}^{L_{\P}^{\bdd}}b$ and $c$ from $\P$ be
arbitrary. By the assumption and compactness there is some $d\in\P$ such that
$\left(a,c\right)\equiv_{A}^{L_{\P}^{\bdd}}\left(b,d\right)$, so
in particular $c\equiv_{A}^{L_{\P}^{\bdd}}d$ and $ac\equiv_{A}^{L_{\P}^{\bdd}}bd$. Then we have $\left(X_A\right)^{\P}\ni\left(a^{-1}b\right)^{c}=\left(ac\right)^{-1}bc=\left(\left(ac\right)^{-1}\left(bd\right)\right)\left(d^{-1}c\right)\in\left(X_A\right)^{2}$.

~

Assume that $ G^{\infty}_{L_{\P}(\emptyset)}(N,M) \not \subseteq  G^{\infty}_{L_{\P}^{\bdd}(A)}(N,M)$ for some small $A \subseteq N$. Let $B$ satisfying $A \subseteq B \subseteq N, |B| \leq \lambda $ be a small set containing representatives of all cosets of all subgroups of $\P(x)$ of bounded index which are definable by disjunctions of complete $L_{\P}^{\bdd}$-types over $A$. 
Let $a \equiv_{B}^{L_{\P}^{\bdd}} b$ be arbitrary. By assumption there is some $c \in B$ from the same coset of $G^{\infty}_{L_{\P}^{\bdd}(A)}(N,M)$ as $a$. It follows that $b$ is from the same coset of $G^{\infty}_{L_{\P}^{\bdd}(A)}(N,M)$ as $c$, thus as $a$. But this implies that $a^{-1}b \in G^{\infty}_{L_{\P}^{\bdd}(A)}(N,M)$, and so 
$\langle X_B \rangle \subseteq G^{\infty}_{L_{\P}^{\bdd}(A)}(N,M)$. Let $X = \bigcap_{B' \subset N, |B'| \leq \lambda} \langle X_{B'} \rangle$. Note that $X$ is invariant with respect to $L_{\P}$-automorphisms of the pair (over $\emptyset$). So, if $X$ had bounded index in $\P$, we would have $G^{\infty}_{L_{\P}(\emptyset)}(N,M) \subseteq X \subseteq \langle X_B \rangle $, a contradiction.
 
Thus $X$ has unbounded index in $\P$, and we can find arbitrary long sequences $\left( B_i, c_i \right)_{i \in \kappa}$ with $B_i \subset N, |B_i|\leq \lambda$ and $c_i \in \P$ such that $c_i \in \left( \bigcap_{j<i} \langle X_{B_j} \rangle \right) \setminus \langle X_{B_i} \rangle$.
 By Erd\H os-Rado we may find such a sequence which is moreover $L_{\P}$-indiscernible. In particular, for some $m \in \omega$ we have:
\begin{enumerate}
 \item [{(i)}]$c_i \in \left( \bigcap_{j<i} X_{B_j}^m \right) \setminus X_{B_i}^{m+4}$ for all $i$.
\end{enumerate}

Next, using $L_{\P}$-indiscernibility of the sequence, the claim and
compactness we can find some finite sets of formulas $\Phi,\Phi'\subset L_{\P}^{\bdd}$
such that:
\begin{enumerate}
\item [{(ii)}] $c_{i}\notin\left(X_{B_i,\Phi}\right)^{m+4}$,
\item [{(iii)}] $\left(X_{B_{i},\Phi'}\right)^{\P}\subseteq\left(X_{B_{i}, \Phi}\right)^{2}$.
\end{enumerate}
(where $X_{A, \Phi }=\left\{ a^{-1}b:\bigwedge_{\phi\in\Phi}\left(\phi\left(a,A\right)\leftrightarrow\phi\left(b,A\right)\right), a,b \in \P \right\} $).

Now, for an arbitrary increasing finite sequence of natural numbers $I=\left(i_{1},\ldots,i_{n}\right)$ 
we define the following elements
of $\P$:
\begin{itemize}
\item $c_{I,0}=c_{2i_{1}+1}\cdot\dots\cdot c_{2i_{n}+1}$,
\item $c_{I,1}=c_{2i_{1}}\cdot\ldots\cdot c_{2i_{n}}$.
\end{itemize}
To obtain a contradiction it is sufficient to show:
\begin{enumerate}
\item [{(iv)}] if $j\notin I$, then $c_{I,0}c_{I,1}^{-1}\in X_{B_{2j}}\subseteq X_{B_{2j},\Phi'}$,
\item [{(v)}] if $j\in I$, then $c_{I,0}c_{I,1}^{-1}\notin X_{B_{2j},\Phi'}$.
\end{enumerate}
(as then the bounded formula $\psi(x,\bar{y}) := \exists z_1, z_2 \in \P \left( x = z_1^{-1} z_2 \land \bigwedge_{\phi \in \Phi'} \left( \phi(z_1,\bar{y}) \leftrightarrow \phi(z_2,\bar{y}) \right) \right)$, $\psi(x,\bar{y}) \in L_{\P}^{\bdd}$ would have $\IP$ over $\P$ in $\left(N,M\right)$, witnessed by
$\left(\left(c_{I,0}c_{I,1}^{-1}\right):I\subset\omega\right)$
in $\P$ and $\left(B_{2j}:j<\omega\right)$ from $N$, contradicting
Remark \ref{rem: bounded formulas are NIP}).

So if $j\notin I$, then $c_{I,0}\equiv_{B_{2j}}^{L_{\P}}c_{I,1}$
by $L_{\P}$-indiscernibility of our sequence,
thus $c_{I,0}c_{I,1}^{-1}\in X_{B_{2j}}$ and (iv) follows.

Assume that (v) does not hold, then one gets a contradiction exactly like in \cite[Theorem 5.3]{Jakub}. Indeed, then for some $j \in I$ we have $c_{I,0}c_{I,1}^{-1} \in X_{B_{2j}, \Phi'}$. Let $I=I_1^{\smallfrown} \{j\}^{\smallfrown} I_2$, then:
$$ c_{I,0} \cdot c_{I,1}^{-1} = c_{I_1,0} \cdot c_{2j+1} \cdot c_{I_2,0} \cdot c_{I_2,1}^{-1} \cdot c_{2j}^{-1} \cdot c_{I_1,1}^{-1} ,$$
$$ c_{I,0} \cdot c_{I,1}^{-1} \cdot c_{I_1,1} \cdot c_{2j} = c_{I_1,0} \cdot c_{2j+1} \cdot c_{I_2,0} \cdot c_{I_2,1}^{-1},$$
$$ c_{2j} = c_{I_1,1}^{-1} \cdot c_{I,1} \cdot c_{I,0}^{-1} \cdot c_{I_1,0} \cdot c_{2j + 1} \cdot c_{I_2,0} \cdot c_{I_2,1}^{-1} = $$
$$= \left( c_{I_1,1}^{-1} \cdot \left( c_{I,1} \cdot c_{I,0}^{-1} \right) \cdot c_{I_1,1} \right) \cdot \left( c_{I_1,1}^{-1} \cdot c_{I_1,0} \right) \cdot c_{2j+1} \cdot \left( c_{I_2,0} \cdot c_{I_2,1}^{-1} \right).$$
Since $j \notin I_1 \cup I_2$, by (iv) we have $\left( c_{I_1,1}^{-1} \cdot c_{I_1,0} \right), \left( c_{I_2,0} \cdot c_{I_2,1}^{-1} \right) \in X_{B_{2j}}$. By the assumption $c_{I,0} \cdot c_{I,1}^{-1} \in X_{B_{2j},\Phi'}$ and $c_{2j+1} \in X_{B_{2j}}^m$. Combining and using (iii) we obtain
$$ c_{2j} \in X_{B_{2j}, \Phi'}^{c_{I_1,1}} \cdot X_{B_{2j}}^{m+2} \subseteq X_{B_{2j}, \Phi}^{m+4},$$

contradicting (ii).

\if(0)
Now we procceed to the proof of the theorem. Let $B\subset N$ be
an arbitrary small set. For every subgroup of $\P$ of bounded index
which is definable as a disjunction of $L$-types over $B$, the set
of representatives of all of its cosets in $\P$ is small. As there
are also only boundedly many subgroups of $\P$ of this form, there
is a small set \textbf{$B'\subset N$ }containing $B$ and representatives
of all cosets for each of such groups. Let $\alpha=\left|B'\right|^{+}$.
By Claim 2, for every $m<\omega$ there is some $\lambda_{m}<\bar{\kappa}$
and some family $\bar{A}=\left\{ A_{i}:i<\lambda_{m}\right\} $. Since
$\bar{\kappa}$ has uncountable cofinality, we can actually find some
$\lambda<\bar{\kappa}$ and $\left\{ A_{i}:i<\lambda\right\} $ which
works for all $m<\omega$, and moreover is closed under finite unions
and $B'\in\bar{A}$. Define $S_{m}=\bigcap_{i<\lambda}X_{\equiv_{A_{i}}}^{m}$
and $T_{m}=\bigcap_{A\subset N,\left|A\right|<\alpha}X_{\equiv_{A}}^{m}$.
Then we have:
\begin{enumerate}
\item $T_{m}\subseteq S_{m}\subseteq T_{m+4}\subseteq\P$ (by Claim 2)
\item $S_{m}=\left(S_{1}\right)^{m}$ (by compactness in $\left(N,M\right)$,
as $\left\{ A_{i}\right\} $ is closed under finite unions)
\item $S_{m}\subseteq S_{n}$ for $n\leq n$ (as $e=e^{-1}e\in S_{1}$)
\item $S_{1}\subseteq X_{\equiv_{B'}}$ (as $B'\in\bar{A}$)
\item $X_{\equiv_{\bigcup_{i<\lambda}A_{i}}}\subseteq S_{1}$ (clear)
\end{enumerate}
Let $H=\bigcup_{m<\omega}S_{m}=\bigcup_{m<\omega}T_{m}$ (by (1)).
And so finally we have:
\begin{itemize}
\item $H$ is a subgroup of $\P$ (given $a\in S_{n}$ and $b\in S_{m}$
we have that \textbf{$ab\in S_{n+m}$ }and $a^{-1}\in S_{n}$)
\item $H$ is $\Aut_{L_{\P}}\left(\emptyset\right)$-invariant (as $T_{m}$
is $\Aut_{L_{\P}}$-invariant for every $m<\omega$)
\item $H$ is of bounded index in $\P$ (consider the group $G=\left\langle X_{\equiv_{\bigcup_{i<\lambda}A_{i}}}\right\rangle $
generated by $X_{\equiv_{\bigcup_{i<\lambda}A_{i}}}$, by (5) it is
a subgroup of $H$. $G$ has only boundedly many cosets in $\P$ as
for any $a,b$ from $\P$, if $a^{-1}b\notin G$ then $a$ and $b$
have different $L_{\P}$-types over $\bigcup_{i<\lambda}A_{i}$, of
which there are boundedly many --- so $H$ is of bounded index as
well).
\item $H\subseteq G_{L\left(B\right)}^{\infty}\left(N,M\right)$ (Let $G$
be some subgroup of $\P$ of bounded index, which is relatively definable
by a disjunction of $L$-types over $B$. As $H=\left\langle S_{1}\right\rangle $
by (2), it is enough to show that $S_{1}\subseteq G$. So let $a\equiv_{B'}^{L_{\P}^{\bdd}}b$
be given. By the choice of $B'$ there is some $c\in B'$ from the
same coset of $G$ as $a$. As $Bc\subseteq B'$, it follows that
$b$ is from the same coset of $G$ as $c$, thus from the same coset
as $a$. But this implies that $a^{-1}b\in G$ --- as wanted).
\end{itemize}
Thus the theorem is proved.
\fi

\end{proof}
\begin{prop}
Let $\left(N,M\right)$ be a saturated pair. Then $G_{L}^{\infty}\left(N,M\right)=G^{\infty}\left(M\right)$.\end{prop}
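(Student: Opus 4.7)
The plan is to prove both inclusions directly, bypassing the kind of elementary chain construction used in the proof of Theorem \ref{thm: Collapse of G^00}.

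For the inclusion $G_L^\infty(N,M) \subseteq G^\infty(M)$, I would argue as follows. By Fact \ref{fac: existence of connected components}(2), $G^\infty(M)$ is the intersection of all $\emptyset$-invariant subgroups $H$ of $M$ of bounded index. Each such $H$ is $\Aut_L(M)$-invariant, and since $M$ is saturated it is therefore a union of complete $L$-types over $\emptyset$; each of these types is realized in $M$ and hence consistent with $\P(x)$, so $H = \bsigma(M,\emptyset)$ for the disjunction $\bsigma$ of those types. Thus $H$ appears in the intersection defining $G_{L(\emptyset)}^\infty(N,M) \supseteq G_L^\infty(N,M)$, giving $G_L^\infty(N,M) \subseteq H$; intersecting over all such $H$ yields the inclusion.

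For the reverse inclusion $G^\infty(M) \subseteq G_L^\infty(N,M)$, the strategy is to exhibit $G_L^\infty(N,M)$ itself as an $\emptyset$-invariant bounded-index subgroup of $M$, so that it belongs to the family defining $G^\infty(M)$. Boundedness of the index comes for free from Theorem \ref{thm: relative G^infty exists}(2). The key step will be $\Aut_L(M)$-invariance: given $\sigma \in \Aut_L(M)$, I view it as a partial $L_\P$-elementary map in the pair $(N,M)$ (it is $L$-elementary since $M \prec_L N$, and trivially preserves $\P$ since both its domain and image lie in $\P(N) = M$), and use saturation of the pair to extend $\sigma$ to some $\hat\sigma \in \Aut_{L_\P}(N,M)$. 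This $\hat\sigma$ permutes small subsets $B \subset N$ and preserves complete $L$-types over them, hence permutes the entire defining family of subgroups $\bsigma(M,B)$, so it preserves their intersection $G_L^\infty(N,M)$; restricting to $M$ gives $\sigma$-invariance.

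The main and essentially only non-routine step will be the extension of $\sigma$ to $\hat\sigma$, and the one subtlety is to check that the saturation of the pair suffices to accommodate a domain of size $|M|$ --- but this is built into the standing assumption that $(N,M)$ is saturated. The whole argument is considerably shorter than that of Theorem \ref{thm: Collapse of G^00}, the essential reason being that for $G^\infty$ only $\Aut_L(M)$-invariance (not $L$-type-definability over $\emptyset$) needs to be transferred from the relative to the absolute setting, and invariance passes transparently via back-and-forth in the pair.
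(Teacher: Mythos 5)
Your first inclusion is fine, and indeed the paper treats that direction as immediate. The gap is in the second inclusion, at exactly the step you flag as "the main and essentially only non-routine step": the extension of $\sigma\in\Aut_L(M)$ to $\hat\sigma\in\Aut_{L_\P}(N,M)$. Your justification --- that $\sigma$ is already a partial $L_\P$-elementary map because it is $L$-elementary and "trivially preserves $\P$" --- is not correct. A partial map which is $L$-elementary in $N$ and whose domain and image lie inside $\P$ is far from being partial $L_\P$-elementary: the latter requires preservation of arbitrary $L_\P$-formulas, which quantify over all of $N$ and mention $\P$, and the $L_\P$-type over $\emptyset$ of an element of $M$ typically encodes information about which cuts over $M$ are realized near it in $N\setminus M$ --- data that an $L$-automorphism of $M$ has no reason to respect. (For \emph{full} automorphisms the implication is valid: an $L$-automorphism of $N$ fixing $\P$ setwise is automatically an $L_\P$-automorphism. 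But $\sigma$ is only defined on $M$, so that observation is not available to you directly.)

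The correct route is therefore to first extend $\sigma$, by back-and-forth, to a full $L$-automorphism of the outer model and then observe it fixes $\P$ setwise; but for that back-and-forth to run over a domain of size $|M|$ the outer model must be $|M|^+$-saturated as an $L$-structure, and this is \emph{not} supplied by "$(N,M)$ is a saturated pair" (the pair's saturation is at level $|N|$, and nothing rules out $|M|=|N|$). This is precisely why the paper's proof introduces an auxiliary $K\succ M$ chosen explicitly $|M|^+$-saturated, defines the intermediate group $\Lambda(M)$ relative to the pair $(K,M)$ rather than $(N,M)$, proves $\Aut_L(M)$-invariance of $\Lambda(M)$ using that $K$, bounds its index via Theorem \ref{thm: relative G^infty exists} applied to a further extension $(K',M')$, and transfers back to $(N,M)$ by Lemma \ref{lem: L-type over P determines hereditary properties in the pair}\ref{lem: L-type over P determines hereditary properties in the pair 1} to get $\Lambda(M)\subseteq G_L^\infty(N,M)$. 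The sandwich $G^\infty(M)\subseteq\Lambda(M)\subseteq G_L^\infty(N,M)$ is the real content of the inclusion you tried to make routine, and your proposal has no substitute for it.
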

\begin{proof}
We can repeat the proof of Theorem \ref{thm: Collapse of G^00} using
Theorem \ref{thm: relative G^infty exists} instead of Theorem \ref{thm: existence of relative G^00},
but here is a shorter argument.

Let $K\succ M$ be $|M|^{+}$-saturated, and let $\left(K',M'\right)\succ \left(K,M\right)$ be $|K|^{+}$-saturated.
Let $\Lambda\left(M\right)$ be the intersection of all hereditary
subgroups of $M$ of hereditarily bounded index (in the sense of the
pair $\left(K,M\right)$) definable by disjunctions of $L$-types
with parameters from $K$. By saturation of $K$ over $M$ and Lemma
\ref{lem: L-type over P determines hereditary properties in the pair}\ref{lem: L-type over P determines hereditary properties in the pair 1}
it follows that $\Lambda\left(M\right)\subseteq G_{L}^{\infty}\left(N,M\right)$.

Note that $G_{L}^{\infty}\left(K',M'\right)\subseteq\Lambda\left(M'\right)$,
so by Theorem \ref{thm: relative G^infty exists} we have $\left[M':\Lambda\left(M'\right)\right]\leq2^{2^{\left|T\right|}}$,
which implies $\left[M:\Lambda\left(M\right)\right]\leq2^{2^{\left|T\right|}}$.
Now $\Lambda\left(M\right)$ is an $L$-invariant subgroup of $M$
(by saturation of $K$ every $L$-automorphism of $M$ extends to
an $L_{\P}$-automorphism of $\left(K,M\right)$) and of index smaller
than the $L$-saturation of $M$, so $G^{\infty}\left(M\right)\subseteq\Lambda\left(M\right)\subseteq G_{L}^{\infty}\left(N,M\right)$.\end{proof}
\begin{cor}
\label{cor: G000(M)=G000(Mext)}
Let $M\models T$,
and let $\widetilde{M}\succ M^{\ext}$ be a monster model for Shelah's
expansion of $M$, in the language $L'$. Then $G^{\infty}\left(\widetilde{M}\right)=G^{\infty}\left(\widetilde{M}\restriction L\right)$.\end{cor}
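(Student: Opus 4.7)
The plan is to imitate the proof of Corollary \ref{cor: G^00 is not changed in Shelah's expansion} almost verbatim, substituting the immediately preceding proposition (which gives $G^{\infty}_{L}(N,M) = G^{\infty}(M)$ in a saturated pair) for Theorem \ref{thm: Collapse of G^00}, and working with invariance in place of type-definability throughout.

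First I would pick $N \succ M$ that is $|M|^{+}$-saturated, let $(N',M') \succ (N,M)$ be $|N|^{+}$-saturated, and identify $\widetilde{M}$ with $M'$ in such a way that every $\phi(\bar{x}) \in L'(\emptyset)$ is equivalent on $M'$ to some $\psi(\bar{x}) \in L(N)$. The inclusion $G^{\infty}(\widetilde{M}) \subseteq G^{\infty}(\widetilde{M} \restriction L)$ is essentially automatic: any $L$-invariant bounded-index subgroup of $\widetilde{M} \restriction L$ is $\Aut_{L'}(\widetilde{M})$-invariant, since $\Aut_{L'}(\widetilde{M}) \subseteq \Aut_{L}(\widetilde{M} \restriction L)$, and hence belongs to the family whose intersection defines $G^{\infty}(\widetilde{M})$.

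For the reverse inclusion, I would note that $\Th_{L'}(M^{\ext})$ is $\NIP$ by Fact \ref{fac: Shelah's theorem on externally definable sets}, so Fact \ref{fac: existence of connected components}(2) applied in $L'$ yields that $G^{\infty}(\widetilde{M})$ is invariant over $\emptyset$ in $L'$ and of bounded index. Since on $M'$ every $L'(\emptyset)$-formula is equivalent to an $L(N)$-formula, $L'(\emptyset)$-invariance of a subgroup of $M'$ coincides with $L$-invariance over the parameter set $N$. Taking $B = N \subset N'$ (small in $N'$), we see that $G^{\infty}(\widetilde{M})$ is an $L$-invariant bounded-index subgroup of $M'$ over a small parameter set from $N'$, hence it appears among the subgroups intersected to form $G^{\infty}_{L}(N',M')$, giving $G^{\infty}_{L}(N',M') \subseteq G^{\infty}(\widetilde{M})$. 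Combining with the preceding proposition yields $G^{\infty}(\widetilde{M} \restriction L) = G^{\infty}(M') = G^{\infty}_{L}(N',M') \subseteq G^{\infty}(\widetilde{M})$, which completes the proof.

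I do not anticipate any serious obstacle: the argument is structurally identical to the $G^{00}$ case, and the only conceptual adjustment is that Fact \ref{fac: existence of connected components}(2) supplies invariance rather than type-definability. The one step that deserves a careful check is the translation between $L'(\emptyset)$-invariance on $M'$ and $L$-invariance over $N$, but this is immediate from the identification of languages set up at the beginning and from the fact that $\Th(M^{\ext})$ has quantifier elimination.
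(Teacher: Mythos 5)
Your argument is correct, and it is precisely what the paper intends: the paper's entire proof of this corollary is the single sentence ``Same as the proof of Corollary \ref{cor: G^00 is not changed in Shelah's expansion}'', and you carry out that translation faithfully, substituting invariance for type-definability and the relative-$G^{\infty}$ proposition for Theorem \ref{thm: Collapse of G^00}.
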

\begin{proof}
Same as the proof of Corollary \ref{cor: G^00 is not changed in Shelah's expansion}.
\end{proof}

\begin{problem}
\begin{enumerate}
\item
By \cite[Remark 8.3]{NIP2} if $G$ is a definably amenable $\NIP$ group such that $G/G^{00}$ is a compact Lie group, then $G^{00}$ is externally definable. Is there any generalization of this fact for arbitrary $\NIP$ groups, or at least for the finite dp-rank case? E.g., does naming $G^{00}$ by a predicate preserve
$\NIP$?
\item
In view of the results of this section, one can try to understand various connected components and quotients
in an elementary pair of models in terms
of the base theory.
\end{enumerate}
\end{problem}

\section{Topological dynamics and the ``Ellis group'' conjecture}\label{sec: TopDyn and Ellis}

\subsection{Topological dynamics and minimal flows} \label{sec: general top dyn}

The subject topological dynamics tries to understand a topological group via its actions
on compact spaces. A good reference is \cite{Auslander}. As originally suggested by Newelski, topological dynamics yields new insights into the model theory of definable
groups, as well as new invariants, which are especially relevant to generalizing stable group theory to other ``tame'' contexts, such as 
groups in $\NIP$ theories. In \cite{AnandTopDyn,GisPenPil}, a theory of ``definable'' topological dynamics was developed, following earlier work of Newelski.

The context is: a model $M_{0}$ and a group $G$ (identified with its points in a saturated elementary extension of $M_{0}$) which is definable over $M_{0}$. 

ASSUMPTION: All types in $S_{G}(M_{0})$ are definable. 

Two extreme cases are:

(a) $M_{0}$ is the standard model of set theory, and $G(M_{0})$ is a group, 

(b) $T$ is an $\NIP$ theory, $M\models T$, $G$ a group definable over $M$, and $M_{0} = M^{\ext}$. 

In case (a) our theory reduces to the classical topological dynamics of the discrete group $G(M_{0})$. In case (b) which is the interest of the current paper, we at least obtain some new invariants and problems. We summarize the theory developed in \cite{GisPenPil}, as background for the results of this section.

We call a map $f$ from $G(M_{0})$ to a compact space $C$ \emph{definable} if for any disjoint closed sets $C_{1}$, $C_{2}$ of $C$, $f^{-1}(C_{1})$ and 
$f^{-1}(C_{2})$ are separated by a definable set. An action of $G(M_{0})$ on a compact space $C$ (by homeomorphisms) is ``definable'' if for any $x\in C$, the map from $G(M_{0})$ to $C$ which takes $g\in G$ to $gx$ is definable. Such actions are called definable $G(M_{0})$-flows.

\begin{fact}
\label{fac: topological dynamics} 
\begin{enumerate}
\item The left action of $G$ on $S_{G}(M_{0})$ is definable. Moreover $(S_{G}(M_{0}),1)$ is the (unique) universal definable $G(M_{0})$-ambit; where by a definable $G(M_{0})$-ambit we mean a definable $G(M_{0})$-flow $X$ with a distinguished point $x$ whose orbit is dense.
\item $S_{G}(M_{0})$ has a semigroup structure $\cdot$, which extends the group operation on $G(M_{0})$ and is continuous in the first coordinate. For $p,q\in S_{G}(M_{0})$, $p\cdot q$ is $\tp(a\cdot b/M_{0})$ where $b$ realizes $q$ and $a$ realizes the unique coheir of $p$ over $M_{0},b$.
\item Left ideals of $S_{G}(M_{0})$ are precisely closed $G(M_0)$-invariant subspaces (i.e. subflows of the definable $G(M_{0})$-flow $S_{G}(M_{0})$).
\item There is a unique (up to isomorphism) minimal definable $G(M_{0})$-flow ${\mathcal M}$, which coincides with some/any minimal subflow of $S_{G}(M_{0})$.
\item  Pick a minimal subflow ${\mathcal M}$ of $S_{G}(M_{0})$ and an idempotent $u\in {\mathcal M}$. Then $u\cdot{\mathcal M}$ is a subgroup of the semigroup $S_{G}(M_{0})$, whose isomorphism type does not depend on the choice of ${\mathcal M}$ or $u$. We call $u\cdot{\mathcal M}$ the Ellis group attached to the data. It also has a certain compact $T_{1}$ topology, with respect to which the group structure is separately continuous, but this will not really concern us here.
\item Using these ideas, the notions of definable amenability and definable extreme amenability can be characterized in a fashion similar to their characterization in the discrete case (e.g. a definable group $G$ is definably extremely amenable if and only if every definable action of it has a fixed point).
\end{enumerate}
\end{fact}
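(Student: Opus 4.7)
The plan is to work through the six parts of the Fact in turn, with the crucial observation that the standing assumption ``all types in $S_{G}(M_{0})$ are definable'' is what promotes the situation from a purely topological one to a genuinely definable semigroup. Throughout, I would fix a large saturated elementary extension $M^{*}$ inside which $G$ lives.

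For (1), fix $p\in S_{G}(M_{0})$ and consider the orbit map $\lambda_{p}\colon g\mapsto g\cdot p$. To show this is definable, it suffices (as in Theorem \ref{thm: Definability of types implies definabilty of measures}, step for types) to separate preimages of disjoint basic clopens $[\phi(x,\bar c)]$ and $[\neg\phi(x,\bar c)]$: writing $g\in\lambda_{p}^{-1}([\phi(x,\bar c)])$ as $\phi(g^{-1}\cdot x,\bar c)\in p$, this set is cut out exactly by the $\phi$-definition of $p$ evaluated at $g$, hence is $L(M_{0})$-definable. Universality of the ambit $(S_{G}(M_{0}),1)$ then follows from the universal property of the Stone space: a definable orbit map $G(M_{0})\to X$ into a compact $X$ extends uniquely to a continuous $G(M_{0})$-equivariant map $S_{G}(M_{0})\to X$.

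For (2), I would define $p\cdot q$ by the coheir recipe stated in the Fact; the uniqueness of the coheir of $p$ over $M_{0}b$ (which is what makes the product well-defined) is exactly the definability hypothesis. Associativity is a routine computation with coheirs, and continuity in the first coordinate reduces to checking that for a fixed $q$ and formula $\phi(x,\bar c)$ the set $\{p:\phi(x,\bar c)\in p\cdot q\}$ is clopen, which is expressible through the $\phi$-definition of $q$. For (3), any closed left ideal is automatically $G(M_{0})$-invariant since $G(M_{0})\subseteq S_{G}(M_{0})$; conversely, continuity in the first coordinate plus density of $G(M_{0})$ shows that a closed $G(M_{0})$-invariant set is a left ideal. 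For (4), Zorn's lemma applied to the poset of nonempty closed left ideals produces a minimal subflow $\mathcal{M}$; uniqueness up to definable-flow isomorphism is obtained by taking the universal ambit map into any other minimal definable $G(M_{0})$-flow and using minimality on both ends.

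For (5), I invoke Ellis's theorem for compact right-topological semigroups: $\mathcal{M}$ contains an idempotent $u$, and $u\mathcal{M}$ is a group with identity $u$. Independence of the isomorphism type from the choice of $\mathcal{M}$ and $u$ is the standard Ellis-group argument via conjugation between minimal ideals. The compact $T_{1}$ topology arises from the quotient $\tau$-topology, but as noted it is not needed here. For (6), definable extreme amenability corresponds to a $G(M_{0})$-fixed point in $S_{G}(M_{0})$, i.e.\ a singleton minimal subflow, which via the universal property of (1) forces a fixed point in every definable $G(M_{0})$-flow; definable amenability is treated analogously at the level of measures. The main obstacle I expect is the well-definedness, associativity, and left-continuity of the semigroup product: without the definability-of-types hypothesis the coheir would fail to be unique and the entire structure on $S_{G}(M_{0})$ would collapse. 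Once that is secured, the remainder is a direct application of classical Ellis--Auslander theory.
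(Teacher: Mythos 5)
The paper does not prove this Fact at all: it is stated explicitly as a summary of the theory developed in \cite{GisPenPil} (see the sentence immediately preceding it), with that reference serving as the proof. Your sketch is therefore a fresh reconstruction rather than an alternative to an argument in the paper. It follows the standard line --- definability of all types over $M_{0}$ makes the product on $S_{G}(M_{0})$ well-defined and continuous in the first coordinate, and then Ellis's lemma on idempotents, Zorn's lemma on nonempty closed left ideals, and the universal-ambit property deliver the rest exactly as in the classical $\beta G$ theory --- and this is indeed how the material is developed in \cite{GisPenPil} and in Newelski's earlier work, so there is no real discrepancy. One point worth sharpening in your step (2): the uniqueness of the coheir of $p$ over $M_{0},b$ is not a direct consequence of the definability of $p$, but rather follows, via heir--coheir duality ($\tp(a/M_{0}b)$ is finitely satisfiable in $M_{0}$ iff $\tp(b/M_{0}a)$ is an heir of $\tp(b/M_{0})$), from the definability of $q=\tp(b/M_{0})$; equivalently one checks directly that $\phi(x,c)\in p\cdot q$ iff $d_{q}\phi(x,c)\in p$, where $d_{q}\phi$ is the $\phi$-definition of $q$, a formulation which simultaneously gives well-definedness and continuity in $p$. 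This is of course covered by the blanket ASSUMPTION that all types over $M_{0}$ are definable, so your argument goes through, but the phrase ``exactly the definability hypothesis'' obscures which type's definability is doing the work. There is also a harmless slip in (1): under the convention $g\cdot\tp(a/M_{0})=\tp(ga/M_{0})$, the condition $\phi(x,\bar c)\in g\cdot p$ unwinds to $\phi(g\cdot x,\bar c)\in p$, not $\phi(g^{-1}\cdot x,\bar c)\in p$.
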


\subsection{Almost periodic types}\label{AP and WGen types}

\begin{defn}
A type $p\in S_{G}(M_{0})$ is called {\em almost periodic} if the closure of the orbit (under $G(M_{0})$) of $p$ is a minimal $G(M_{0})$-flow. 
\end{defn}

The usual characterization of almost periodicity holds:

\begin{fact} \label{fac: almost periodicity} 
The following are equivalent for a type $p\in S(M_0)$:
\begin{enumerate}
\item \label{fac: almost periodicity 1} $p(x)$ is almost periodic.
\item \label{fac: almost periodicity 2} For every $\phi(x) \in p$, the set $\overline{Gp}$ is covered by finitely many left translates of $\phi(x)$.
\item \label{fac: almost periodicity 3} For every formula $\phi(x)\in p$, $\{g\in G(M_{0}):g\phi \in p\}$ (which is a definable subset of $G(M_{0})$ by definability of $p$) is right generic, namely finitely many right translates cover $G(M_{0})$. 
\end{enumerate}
\end{fact}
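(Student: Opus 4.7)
My plan is to prove the cyclic implications $(1)\Rightarrow(2)\Rightarrow(3)\Rightarrow(1)$, using throughout the action of $G(M_0)$ on formulas and types given by $g\cdot\phi(x)=\phi(g^{-1}x)$, so that $g\cdot[\phi]=[g\cdot\phi]$ as clopen subsets of $S_G(M_0)$ and $\phi\in g\cdot q$ iff $g^{-1}\cdot\phi\in q$.

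For $(1)\Rightarrow(2)$, I will fix $\phi\in p$ and use minimality of $\overline{Gp}$ to see that every $q\in\overline{Gp}$ satisfies $p\in\overline{Gq}$, and is therefore translated into the clopen neighborhood $[\phi]$ of $p$ by some $g\in G(M_0)$. This yields a clopen cover $\{g\cdot[\phi]:g\in G(M_0)\}$ of $\overline{Gp}$, and compactness extracts a finite subcover of the required form.

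The implications $(2)\Leftrightarrow(3)$ will follow from a bookkeeping calculation linking covers of $\overline{Gp}$ to the definable set $S_\phi=\{g\in G(M_0):g\cdot\phi\in p\}$ (definability being immediate from the $\phi$-definition of $p$). The key identity is $g\in S_\phi$ iff $g^{-1}\cdot p\in[\phi]$. Applied to the cover from $(2)$ and restricted to the dense orbit $Gp$, this translates, after inverting, into $G(M_0)=\bigcup_i S_\phi g_i^{-1}$, giving right-genericity. Reversing the calculation, from $(3)$ I will obtain a finite clopen cover $Gp\subseteq\bigcup_j[h_j^{-1}\cdot\phi]$, hence $\overline{Gp}\subseteq\bigcup_j[h_j^{-1}\cdot\phi]$. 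To finish $(3)\Rightarrow(1)$, I will pick any $q\in\overline{Gp}$, extract some $j$ with $h_j^{-1}\cdot\phi\in q$, i.e.\ $\phi\in h_j\cdot q$; since $\phi\in p$ was arbitrary, this gives $p\in\overline{Gq}$, and combined with $\overline{Gq}\subseteq\overline{Gp}$ we conclude $\overline{Gq}=\overline{Gp}$, proving minimality.

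I do not anticipate any deep obstacle, as this is a standard topological-dynamics argument transplanted to the definable context. The only care needed is the left/right bookkeeping: because the convention $g\cdot\phi(x)=\phi(g^{-1}x)$ inverts sides when passing between covers of $\overline{Gp}$ and translates of subsets of $G(M_0)$, one must consistently track which side of the action appears where. Everything else reduces to compactness of the type space and the definition of minimality.
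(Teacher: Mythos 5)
Your proof is correct and follows essentially the same approach as the paper: the $S_\phi$ bookkeeping linking covers of $\overline{Gp}$ to right-genericity, and the deduction of minimality from the finite cover, match the paper's calculations. The only difference is that you prove $(1)\Leftrightarrow(2)$ directly via the standard compactness-and-minimality argument, whereas the paper cites Newelski (Remark 1.6 of \cite{New4}) for that equivalence, so your version is slightly more self-contained.
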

\begin{proof} 
The equivalence of \ref{fac: almost periodicity 1}  and \ref{fac: almost periodicity 2}  holds by e.g. \cite[Remark 1.6]{New4}.

\ref{fac: almost periodicity 3}  $\Rightarrow$ \ref{fac: almost periodicity 1}: Suppose \ref{fac: almost periodicity 3}  holds. Suppose $q\in \overline{G(M_{0})p}$. Let $\phi(x)\in p$. Let 
$Z = \{g\in G(M_{0}):g\phi\in p \}$, so $Zg_1 \cup .. \cup Z g_n = G(M_{0})$ for some $g_{1},..,g_{n}\in G(M_{0})$.
Hence $g_{1}^{-1}\phi \vee ... \vee g_{n}^{-1}\phi  \in gp$ for all $g\in G(M_{0})$, whereby some $g_{i}^{-1}\phi\in q$, so $\phi \in g_{i}q$. We have shown that $p\in \overline{G(M_{0})q}$. So $\overline{G(M_{0})p}$ is minimal.
\newline
\ref{fac: almost periodicity 1} $\Rightarrow$ \ref{fac: almost periodicity 3}: Suppose $p$ is almost periodic, $\phi\in p$ and $Z =\{g\in G(M_{0}):g\phi\in p\}$. Let $\mathcal M = \overline{G(M_{0})p}$. Now by \ref{fac: almost periodicity 2} there are $g_{1},..,g_{n}\in G(M_{0})$ such that the clopen set $g_{1}\phi \vee ..\vee g_{n}\phi$ includes $\mathcal M$. It follows from the definition of $Z$ that $Z g_1^{-1} \cup ...\cup Z  g_n^{-1} = G_{0}(M)$. 

\end{proof}

\subsection{The Ellis group conjecture}
\medskip
\noindent
The quotient map from $G$ to $G/G^{00}_{M_{0}}$ factors through the tautological map $g\to \tp(g/M_{0})$ from $G$ to $S_{G}(M_{0})$,
and we let $\pi$ denote the resulting map from $S_{G}(M_{0})$ to $G/G^{00}_{M_{0}}$. It was pointed out in \cite{GisPenPil} that $G/G^{00}_{M_{0}}$ is the ``universal definable compactification'' of $G(M_0)$, which in case (a) from Section \ref{sec: general top dyn} is what is called the Bohr compactification of the discrete group $G(M_{0})$.

Let us note:

\begin{rem} The map $\pi$ is a surjective semigroup homomorphism, and for any minimal subflow ${\mathcal M}$ of $S_{G}(M_{0})$ and idempotent $u\in {\mathcal M}$, the restriction of $\pi$ to $u\cdot{\mathcal M}$ is surjective, hence a surjective group homomorphism.
\end{rem}
\begin{proof} The only thing possibly requiring a proof is the surjectivity of the restriction of $\pi$ to $u\cdot {\mathcal M}$. Let $g\in G$ and $p = \tp(g/M_{0})$. Then $p\cdot u \in {\mathcal M}$ as the latter is a left ideal. Hence $u\cdot (p \cdot u) \in {\mathcal M}$ and as $\pi(u)$ is the identity of $G/G^{00}_{M_{0}}$ we see that $\pi(u\cdot p \cdot u) = \pi(p)$.

\end{proof}

We now restrict to case (b) above: namely $T$ is $\NIP$, $G$ is a group definable over a model $M\models T$ and $M_{0} = M^{\ext}$. We make free use of the results from the previous sections, namely the preservation of various properties and objects associated to $G$ (definable amenability, $G/G^{00}$, etc) when passing from $T$ to $Th(M_{0})$. 

\medskip
\noindent
{\em Ellis group conjecture.} Suppose $G$ is definably amenable. Then the restriction of $\pi: S_{G}(M_{0}) \to G/G^{00}$ to $u\cdot{\mathcal M}$ is an isomorphism (for some/any choice of minimal subflow ${\mathcal M}$ of $S_{G}(M_{0})$ and idempotent $u\in {\mathcal M}$). 

\medskip
We remark that without the definable amenability assumption this statement is not true even for groups definable in $o$-minimal theories, see \cite{GisPenPilSL2R}. In the following subsections, we will prove (or explain) the announced cases of the conjecture, thus establishing Theorem \ref{thm: Ellis group conjectures}.
\noindent

\subsection{$G$ is definably extremely amenable, proof of Theorem \ref{thm: Ellis group conjectures}\ref{thm: EllisDefExtrAm}}
\label{sec: EllisStarts}

It was observed in Proposition \ref{prop: def ext am iff} that if $G$ is definably extremely amenable then $G=G^{00}$. On the other hand by definition of extreme amenability $u\cdot\mathcal{M}=\left\{ u\right\} $ for any minimal subflow $\mathcal{M}$ and idempotent $u\in\mathcal{M}$.

\subsection{$G$ is fsg, Theorem \ref{thm: Ellis group conjectures}\ref{thm: EllisFSG} } \label{sec: Ellis for fsg}
This was essentially proved in \cite{AnandTopDyn} (Theorem 3.8). Recall that  $G$ is $\fsg$ if it has a global $fsg$ type, namely a global type every translate of which is finitely satisfiable in $M$. We summarize the situation for the sake of completeness:
\begin{fact}
Let $G$ be $\fsg$. Then we have:
\begin{enumerate}
\item A global type is left (equivalently, right) generic if and only if it is left (equivalently, right) $f$-generic over $M_0$, if and only if every translate is finitely satisfiable in $M_0$.
\item There is a unique minimal subflow of $S_G(M_0)$, namely the set of generic types, and the Ellis group conjecture holds.
\end{enumerate}
\end{fact}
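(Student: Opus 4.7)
My plan is to reduce everything to \cite[Theorem 3.8]{AnandTopDyn} via the preservation results of earlier sections, combined with the general theory of $\fsg$ groups from \cite{NIP1, NIP2}. First, by Theorem~\ref{thm: fsg and definable generics}\ref{thm: fsg and definable generics 1}, $G$ is $\fsg$ in $\Th(M^{\ext})$ as well, so we may work entirely in $\Th(M^{\ext})$, where all types over $M_0 = M^{\ext}$ are definable and the topological-dynamical framework of Fact~\ref{fac: topological dynamics} applies.

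For part~(1), I would invoke the foundational results on $\fsg$ groups from \cite{NIP1, NIP2}: existence of one global type every translate of which is finitely satisfiable in $M_0$ forces this property to hold for every $f$-generic type (and the converse is immediate, since finitely satisfiable types are in particular invariant, hence their translates do not fork over $M_0$). Similarly, in the $\fsg$ setting the class of generic definable sets coincides with the class of sets non-forking over every model, so generic and $f$-generic global types agree. The left/right interchange is a standard consequence of $\Stab(p) = G^{00}$ in Fact~\ref{fac: properties of def am groups}(3) applied to each translate.

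For part~(2), write $\mathcal{G} \subseteq S_G(M_0)$ for the set of restrictions of global generic types. By~(1), $\mathcal{G}$ is non-empty, closed (since types finitely satisfiable in $M_0$ form a closed subset of $S_G(M_0)$), and $G(M_0)$-invariant, hence a subflow. To see minimality, given $p, q \in \mathcal{G}$ and $\phi(x) \in q$, the set $\{g \in G(M_0) : g^{-1}\phi \in p\}$ must be non-empty: otherwise the partial type $\{\neg\phi(g^{-1}x) : g \in G(M_0)\}$ would lie in $p$, contradicting finite satisfiability of $p$ in $M_0$, since by $\fsg$ applied to $q$ finitely many left-translates of $\phi$ already cover $G$. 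Thus $q \in \overline{G(M_0) \cdot p}$, so $\mathcal{G}$ is a single orbit closure, hence a minimal subflow. Conversely, any almost periodic type is seen to be generic by Fact~\ref{fac: almost periodicity}\ref{fac: almost periodicity 3} combined with $\fsg$, yielding uniqueness.

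For the Ellis group isomorphism, fix an idempotent $u \in \mathcal{G}$; then $u\mathcal{G}$ is the Ellis group and $\pi|_{u\mathcal{G}}$ is a surjective group homomorphism onto $G/G^{00}$. The main obstacle is injectivity: given $p \in u\mathcal{G}$ with $\pi(p) = eG^{00}$, we need $p = u$. The key input is Fact~\ref{fac: properties of def am groups}(3), which says $\Stab(p) = G^{00}$ for every generic $p$; combined with the fact that $\pi(p) = e$ forces $p$ to concentrate on $G^{00}$ and with $u \cdot p = p$, one deduces $p = u$ via the coheir description of multiplication in $S_G(M_0)$, exactly as in \cite[Theorem 3.8]{AnandTopDyn}. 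I expect this last algebraic manipulation, where the $\fsg$ hypothesis is indispensable, to be the technically most delicate point of the proof.
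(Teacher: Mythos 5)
Your proposal matches the paper's approach: the paper itself treats this Fact as a restatement of Theorem 3.8 of \cite{AnandTopDyn}, with the one substantive new point being that the preservation of $\fsg$ under passing to $\Th(M^{\ext})$ (your first step, via Theorem~\ref{thm: fsg and definable generics}(1)) replaces the earlier reliance on ``generic compact domination'' whose original proof was incomplete. Your additional sketch of the internal argument of the cited theorem (generics form a minimal subflow, uniqueness, injectivity of $\pi$ on $u\mathcal{M}$) is consistent with that source and does not change the route taken.
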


On the face of it the proof of Theorem 3.8 in \cite{AnandTopDyn}  depended on ``generic compact domination'' for $fsg$ groups from 
\cite{HruPilSimMeas}, but this was only 
required to deduce that $G$ is $fsg$ in $Th(M_{0})$ which we have already established in Theorem 3.19 of the current paper by direct means.  
So generic compact domination for $fsg$ groups, the proof of which in \cite{HruPilSimMeas} is incomplete, is not needed. 

\subsection{$G$ admits a definable $f$-generic, proof of Theorem \ref{thm: Ellis group conjectures}\ref{thm: EllisDefFGen}}
The other ``extreme case'' of definable amenability is when there is a global $f$-generic type, definable over $M$. We expect that if $G$ has some global definable $f$-generic type, then there is one which is definable over $M$. This feature was also considered by Hrushovski in \cite{HrushMetastable}, under the name ``groups with definable generics'' and Example 6.30 of that paper gives several examples from the theory of algebraically closed valued fields.

\begin{prop} \label{prop: definable f-generics}   Suppose $G$ has a global $f$-generic type, definable over $M_0$. Then
\newline
(i) $G^{00} = G^{0}$.
\newline
(ii) For $p\in S_{G}(M_{0})$, $p$ is almost periodic if and only if the global heir of $p$ is $f$-generic.
\newline
(iii) Any minimal subflow $\mathcal M$ of $S_{G}(M_{0})$ is already a group, so coincides with the Ellis group.
\newline
(iv) The Ellis group conjecture holds:  the restriction of $\pi$ to ${\mathcal M}$ is an isomorphism with $G/G^{0}$. 
\end{prop}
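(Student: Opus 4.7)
The plan is to prove (i)--(iv) in sequence, with (i) providing the structural backbone and the semigroup structure on $S_G(M_0)$ doing the heavy lifting for (iii)--(iv) and the backward direction of (ii).

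For (i), I would show that $\Stab(\bar p) = G^{00}$ equals an intersection of $M_0$-definable finite-index subgroups. For each $L$-formula $\phi(x,y)$, set $\phi'(x,z,y) := \phi(zx,y)$ and let $d_{\phi'}\bar p(z,y) \in L(M_0)$ be its $\bar p$-definition. The $M_0$-definable equivalence relation on $G$ given by $g E_\phi h$ iff $\forall y\,(d_{\phi'}\bar p(g^{-1},y)\leftrightarrow d_{\phi'}\bar p(h^{-1},y))$ captures the agreement of $g\bar p$ and $h\bar p$ on $\phi$-instances. Since $\Stab(\bar p) = G^{00}$ has bounded index and refines each $E_\phi$, the $\NIP$ dichotomy (finite vs.\ continuum many classes) forces $E_\phi$ to have finitely many classes. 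Using normality of $G^{00}$ to translate through arbitrary $h$, one verifies $G^{00} \subseteq H_\phi := \{g \in G : gh\mathrel{E_\phi}h \text{ for all } h\in G\}$, and conversely $\bigcap_\phi H_\phi \subseteq \Stab(\bar p)$ by taking $h = 1$. So $G^{00} = \bigcap_\phi H_\phi$ is an intersection of $M_0$-definable finite-index subgroups, giving $G^{00} \supseteq G^0$; combined with the standard inclusion $G^{00} \subseteq G^0$, we get $G^{00}=G^0$, of finite index.

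The forward direction of (ii) is then immediate from Fact \ref{fac: almost periodicity}\ref{fac: almost periodicity 3}: fixing $\phi(x) \in p$, the set $Z_\phi := \{g \in G(M_0) : g\phi \in p\} = \{g : \phi \in g\bar p\}$ is a union of $G^0(M_0)$-cosets in $G(M_0)$ (because $\Stab(\bar p)=G^0$), contains $1$, and since $[G:G^0]<\infty$ is a non-empty union of finitely many cosets, hence right-generic.

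For the remaining assertions, I would argue via the semigroup structure. Let $r_0 := q_0\restriction M_0$; by the forward direction $r_0$ is almost periodic, and $\mathcal M_0 := G(M_0)\cdot r_0$ is a finite minimal subflow of cardinality $[G:G^0]$, visibly a group isomorphic to $G/G^0$ via $g\cdot r_0 \mapsto gG^0$, an identification which coincides with $\pi|\mathcal M_0$. Let $\mathcal M$ be an arbitrary minimal subflow with idempotent $u$. Density of $G(M_0)$ in $S_G(M_0)$ together with continuity of multiplication in the first coordinate yields $\mathcal M \cdot r_0 \subseteq \mathcal M_0$; as a nonempty closed left ideal inside the minimal $\mathcal M_0$ this is an equality. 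So $u\cdot r_0 = h\cdot r_0$ for some $h \in G(M_0)$, and applying the semigroup homomorphism $q\mapsto\bar q$ (the definable heir, which respects products since a product of definable types is definable) gives $(h^{-1}\bar u)\cdot q_0 = q_0$ in $S_G(\M)$. Granting the step that $h^{-1}\bar u$ (hence $\bar u$) is $f$-generic, (i) yields $\Stab_{G(M_0)}(u) = G^0(M_0)$, so $|\mathcal M| = |G(M_0)\cdot u| = [G:G^0] = |\mathcal M_0|$; the $G$-equivariant surjection $\mathcal M \to \mathcal M_0$, $x\mapsto x\cdot r_0$, is then a bijection, transporting the group structure of $\mathcal M_0$ to $\mathcal M$ (proving (iii)) and recovering $\pi|\mathcal M$ as an isomorphism with $G/G^0$ (proving (iv)). Finally, every $p \in \mathcal M$ has the form $gu$ for some $g \in G(M_0)$, whence $\bar p = g\bar u$ is $f$-generic, completing the backward direction of (ii).

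The main obstacle is the deduction that $(h^{-1}\bar u)\cdot q_0 = q_0$ implies $h^{-1}\bar u$ is $f$-generic, a form of left cancellation by the definable $f$-generic $q_0$ in the semigroup $S_G(\M)$. It should follow by exploiting that products of $f$-generic types are $f$-generic together with an idempotent analysis in the Ellis semigroup analogous to Theorem 3.8 of \cite{AnandTopDyn} in the $\fsg$ case, but it requires careful bookkeeping on how $M_0$-invariance transfers across the semigroup multiplication.
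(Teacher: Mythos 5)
Your proof of (i) is essentially the paper's argument, just packaged via equivalence relations rather than the local stabilizers $\Stab_{\phi}(\bar p)$; that part is fine. But the final clause ``$G^{00}=G^{0}$, \emph{of finite index}'' does not follow and is false in general: $G^{0}$ is a bounded-index type-definable subgroup obtained as a (typically infinite) intersection of finite-index definable subgroups, and a group with a definable $f$-generic can certainly have $[G:G^{0}]$ infinite (e.g.\ a saturated model of Presburger arithmetic, which is definably amenable and has a definable $f$-generic at $+\infty$, yet $G^{0}=\bigcap_{n} n G$ has infinite bounded index). This error is not cosmetic: you rely on $[G:G^{0}]<\infty$ to get right-genericity in the forward direction of (ii) (that step can be repaired, as the paper does, by a compactness argument using only that $G^{00}$ has \emph{bounded} index and is normal, and that $X$ is definable), and you rely on it much more seriously in (iii)/(iv), where the whole argument is a cardinality count: you claim $\mathcal{M}_{0}:=G(M_{0})\cdot r_{0}$ is already a closed \emph{finite} minimal subflow, and deduce $\mathcal M\to\mathcal M_{0}$ is a bijection by comparing finite cardinalities. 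With $[G:G^{0}]$ infinite, $G(M_0)\cdot r_0$ need not be closed, and the counting collapses. The paper avoids this entirely by parameterizing the minimal subflow directly as $\{\tp(ga/M_{0}): g\in G(\M)\}$ where $a$ realizes the global heir $\bar p$, and noting that definability over $M_{0}$ of $g\bar p$ makes $\tp(ga/M_{0})\mapsto g\Stab(\bar p)=gG^{0}$ a well-defined bijection --- no finiteness needed.

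Separately, the ``left cancellation'' step --- that $(h^{-1}\bar u)\cdot q_{0}=q_{0}$ forces $h^{-1}\bar u$ to be $f$-generic --- is explicitly left open in your writeup, and I don't think it is routine. The paper's route for the backward direction of (ii) is genuinely different and sidesteps this: it first shows every type in the specific minimal flow $I=\overline{G(M_{0})p}$ (for $p=\bar p|_{M_{0}}$) has $f$-generic global heir, by writing any $q\in I$ as $\tp(ab/M_{0})$ with $\bar q = a\bar p$ a translate of $\bar p$; then, for an arbitrary almost periodic $q$ with $J=\overline{G(M_{0})q}$, it uses the standard fact that $p'\mapsto p'\cdot q$ is an isomorphism of minimal $G(M_{0})$-flows $I\to J$, sets $r=p\cdot q$, and verifies directly (via a definability-of-$\phi$-definitions computation) that $\Stab_{\phi}(r)=\Stab_{\psi}(p)$ is finite index for each $\phi$, so $\bar r$ is $f$-generic and definable. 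I'd recommend reworking (ii)--(iv) along these lines rather than trying to push the cancellation lemma through.
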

\begin{proof} (i)   Working in $T$, let  $q$ be a global $f$-generic definable over $M$ (or just definable). By Fact \ref{fac: properties of def am groups}, the left stabilizer of $q$ is $G^{00}$. But this left stabilizer is clearly an intersection of $M$-definable subgroups: for each $\phi(x,y)\in L$, $\Stab_{\phi}(q) = \{g\in G: \phi(x,c)\in q$ iff $\phi(g^{-1} x,c)\in q$ for all $c\}$. So each $\Stab_{\phi}(p)$ is finite index, whereby $G^{00} = G^{0}$. 

\vspace{2mm}
\noindent
(ii)  First assume that $p\in S_{G}(M_{0})$ and that the global heir $\bar p$ of $p$ is $f$-generic. We will use Fact \ref{fac: almost periodicity}. Let $\phi(x)\in p$. Then $X = \{g\in G: g\phi\in \bar p\}$ is definable over $M_{0}$ (by definability over $M_{0}$ of $\bar p$). Now $X$ contains the left stabilizer of $\bar{p}$ which, by Fact \ref{fac: properties of def am groups}, is $G^{00}$. As $G^{00}$ has bounded index in $G$ (and is a normal subgroup) and $X$ is definable, finitely many \emph{right} translates of $X$ cover $G$. Hence as $X$ is definable over $M_{0}$ the same thing is true in $G(M_{0})$.

The converse is a little more complicated. First by \ref{thm: fsg and definable generics}, there is a global $f$-generic ${\bar p}$ of $G$ with respect to $Th(M_{0})$ which is definable over $M_{0}$. Let $p$ be the restriction of ${\bar p}$ to $M_{0}$, so ${\bar p}$ is the unique global heir of $p$ and by the first part of the proof, $p$ is almost periodic. Let $I = \overline{G(M_{0})p}$. We first note that for any $q\in I$ the global heir of $q$ is $f$-generic. This is because $q = \tp(ab/M_{0})$ where $a\in G$ and $b$ realizes the unique heir of $p$ over $M_{0},a$ by Fact \ref{fac: topological dynamics}. But then the unique global heir of $q$ is precisely $a{\bar p}$ which we know to be $f$-generic and definable. 

Now let $q\in S_{G}(M_{0})$ be an almost periodic type, not necessarily in $I$. Let $J = \overline{G(M_{0})q}$.  By what we 
saw in the last paragraph, it suffices to show that some $r\in J$ has the required property. From material in Section 3 of \cite{GisPenPil}, the map from $I$ to $J$ which takes $p'\in I$ to $p'\cdot q$ is an isomorphism of 
$G(M_{0})$-flows, namely a homeomorphism which commutes with the action of $G(M_{0})$.  Let $r = p\cdot q$, and we show that $r$ (or its global heir) is as required, which will be enough.  We let $L$ denote the language of the structure $M_{0}$.
\newline
{\em Claim.} For any $L$-formula $\phi(x,y)$, $\Stab_{\phi}(r) = \{g\in G(M_{0}):$ for all $c\in M_{0}$, $\phi(x,c)\in r \leftrightarrow g\phi(x,c)\in r\}$ is a definable subgroup of $G(M_{0})$ of finite index. 

\vspace{2mm}
\noindent

Granted the claim, let $\bar r$ be the unique global heir of $r$ (i.e. defined by the same defining schema), and 
we see that $\Stab({\bar r}) = G^{0}$. Hence $\overline{r}$ is a global $f$-generic type definable over $M_{0}$, and we are 
finished.

\vspace{2mm}
\noindent
{\em Proof of Claim.}
Definability is immediate, by definability of the type $r$. 
Let $g_{1}$ realize $p$, and $a$ realize the unique heir of $q$ over $M_{0},g_{1}$, So $g_{1}a$ realizes $r$. 
Let $g\in G(M_{0})$, $c\in M_{0}$ and $\phi(x,y)\in L$. 
Let $\psi(z,y)$ be the $\phi(zx,y)$-definition for $q$. 
Then $g\phi(x,c)\in r$ iff $\phi(g^{-1}x,c)\in r$ iff $\models\phi(g^{-1}g_{1}a,c)$ iff $\phi((g^{-1}g_{1})x,c)$ is in 
the unique heir of $q$ over $M_{0},g_{1}$ iff $\models \psi(g^{-1}g_{1},c)$ iff $g\psi(x,c)\in p$. 

Hence $\Stab_{\phi}(r)  = \Stab_{\psi}(p)$. As $\bar p$ is $f$-generic and definable over $M_{0}$, its stabilizer has bounded index, hence  $\Stab_{\psi}(\bar p)$ which is an $M_{0}$-definable subgroup, has finite index.  Completing the proof of (ii). 

\vspace{2mm}

For the rest, we prove (iii) and (iv) simultaneously. We have seen have that any minimal (closed $G(M_{0})$-invariant) 
subflow $\mathcal M$ of $S_{G}(M_{0})$ has the form $\overline{G(M_{0})p}$ for $p\in S_{G}(M_{0})$ such that the global heir 
${\bar p} \in S(M')$ of $p$ has stabilizer equal to $G^{0}(M')$.  Fix such ${\mathcal M}$ and $p$. Let $a$ 
realize $\bar p$ (in a bigger model). Then ${\mathcal M} = \{\tp(ga/M_{0}):g\in G(M')\}$. As $\Stab({\bar p}) = 
G^{0}(M')$ it is easy to see that the elements of ${\mathcal M}$ are in natural $1-1$ correspondence with the cosets of 
$G^{0}$ in $G$. This suffices.
\end{proof}

\subsection{$G$ is dp-minimal, proof of Theorem \ref{thm: Ellis group conjectures}\ref{thm: EllisDPMin}}

Recall that a (partial) type $p$ over a set $A$ is dp-minimal if for any $a \models p$ and sequences $I_0,I_1$ mutually indiscernible over $A$, there is $i \in \{ 0,1 \}$ such that $I_i$ is indiscernible over $aA$ (see e.g. \cite{SimonDPmin}). We say that a definable group is dp-minimal if it is such as a definable set. It is easy to see that every extension of a dp-minimal type is dp-minimal.
\begin{prop}
\label{prop: dp-min def amenable group dichotomy} Let $G$ be a definably amenable, dp-minimal
group definable over $M_0$. Then either $G$ has $\fsg$ (witnessed as usual over $M_0$), or it has
a definable global $f$-generic type, definable over $M_0$.
\end{prop}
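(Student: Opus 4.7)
The plan is to invoke a dichotomy due to Simon for invariant types concentrating on a dp-minimal set: any global type in $S_G(\M)$ which is invariant over $M_0$ is either finitely satisfiable in $M_0$ or definable over $M_0$. This is the dp-minimal analogue, for types on a dp-minimal definable set, of the usual partition of $M_0$-invariant types into finitely satisfiable and definable ones; it should be applied in the relative form, i.e.\ to types on the dp-minimal set $G$ inside the ambient (not necessarily dp-minimal) theory $T$.

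Using definable amenability and Fact \ref{fac: properties of def am groups}(1), I would fix a global $f$-generic $p \in S_G(\M)$ with respect to $M_0$. By the definition of $f$-genericity, each translate $gp$, for $g \in G(\M)$, is $M_0$-invariant; moreover $gp$ is itself $f$-generic, because for any $h \in G(\M)$ we have $h \cdot (gp) = (hg) \cdot p$, which is $M_0$-invariant by $f$-genericity of $p$. Since $G$ is dp-minimal, the dichotomy then applies to every translate $gp$, forcing each of them to be either finitely satisfiable in $M_0$ or definable over $M_0$.

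The argument then splits into two cases. If some translate $g_0 p$ is definable over $M_0$, then $g_0 p$ is already a global $f$-generic which is definable over $M_0$, giving the second alternative of the proposition. Otherwise every translate $gp$ is finitely satisfiable in $M_0$, in which case $p$ itself witnesses that $G$ has $\fsg$ with respect to $M_0$.

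The main (and essentially only) obstacle is verifying that the dp-minimal dichotomy really applies in this relative form: the set $G$ is dp-minimal as a definable set, and we need the dichotomy for $M_0$-invariant global types concentrating on $G$, regardless of the dp-rank of the ambient theory. Granted this statement, the proof is just a case split on whether some translate of a fixed $f$-generic is definable over $M_0$; no further calculation is needed.
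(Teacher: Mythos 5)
Your argument is correct, and the heart of it---applying Simon's dichotomy for $M_0$-invariant types on a dp-minimal set and then case-splitting on definable versus finitely satisfiable---is exactly the heart of the paper's proof. The only difference is how you arrive at the relevant $M_0$-invariant $f$-generic types: you fix a single global $f$-generic $p$ over $M_0$ directly via Fact~\ref{fac: properties of def am groups}(1) and run the dichotomy over its translates $gp$, whereas the paper first produces a $G(M_0)$-invariant measure over $M_0$, extends it via Proposition~\ref{prop: extending to a G-inv M-inv measure} (ultimately resting on Theorem~\ref{thm: invariant heir of a measure exists}) to a global $G$-invariant and $M_0$-invariant measure $\mu'$, and then takes types $p'$ in the support of $\mu'$, which are automatically $f$-generic and $M_0$-invariant. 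Your route is a bit more economical since it avoids the measure machinery entirely; the paper's route showcases its own earlier results on invariant heirs of measures. Your worry about whether the dichotomy applies in the relative form is well placed and is handled the same way the paper handles it: the statement from \cite{SimonDPmin} is for dp-minimal \emph{types} (not dp-minimal theories), and as noted in the remark preceding the proposition, every extension of a dp-minimal type is dp-minimal, so any $M_0$-invariant global type concentrating on $G$ is dp-minimal and the dichotomy applies.
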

\begin{proof}
Firstly, the existence of a global $G$-invariant Keisler measure yields trivially
a $G(M_0)$-invariant Keisler measure $\mu$ over $M_0$ (i.e. on $M_0$-definable subsets of $G$). By Proposition \ref{prop: extending to a G-inv M-inv measure}, $\mu$ extends to a global $G$-invariant Keisler measure $\mu'$ which is definable over $M_0$. Let $p'$ be a global type in the support of $\mu'$. So $p'$ is $\Aut\left(\M/M_0\right)$-invariant. 
It is proved in \cite{SimonDPmin} that a dp-minimal global type invariant
over $M_0$ is either definable over $M_0$, or finitely satisfiable in
$M_0$.
Now any global type $p'$ in the support of $\mu'$ is $f$-generic and $M_0$-invariant. If some such type $p'$ is definable
over $M_0$ then we have our global $f$-generic, definable over $M_0$. Otherwise all global $p'$ in the support of $\mu$ are finitely satisfiable in $M_0$, whereby $G$ is $\fsg$ (with respect to $M_0$). 
\end{proof}

So we can derive part \ref{thm: EllisDPMin} of Theorem \ref{thm: Ellis group conjectures}:
\begin{cor}
Let $G$ be a definably amenable, dp-minimal group and $M$ any model over which $G$ is defined. Then $G/G^{00}$
coincides with the Ellis group  (computed over $M^{\ext}$).\end{cor}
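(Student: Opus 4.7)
The strategy is to reduce this to the two already-proven cases in parts \ref{thm: EllisFSG} and \ref{thm: EllisDefFGen} of Theorem \ref{thm: Ellis group conjectures}, via the dichotomy established in Proposition \ref{prop: dp-min def amenable group dichotomy}. The Ellis group conjecture is formulated over $M_0 = M^{\ext}$, so the plan is to verify the hypotheses of the dichotomy in $\Th(M^{\ext})$ and then apply it with $M_0 = M^{\ext}$.

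First, I would observe that $G$ remains definably amenable in $\Th(M^{\ext})$ by Theorem \ref{thm: lifting measures to Shelah's expansion}(1), and that dp-minimality is preserved when passing to the Shelah expansion (this has been established elsewhere in the study of externally definable sets in $\NIP$ theories, and moreover dp-minimality is a theory-level property which can be transferred since every $L'$-formula is equivalent on $M^{\ext}$ to an $L$-formula with parameters in a larger model, so one can directly translate mutually indiscernible sequences). Hence the hypotheses of Proposition \ref{prop: dp-min def amenable group dichotomy} hold with $M_0 = M^{\ext}$ as a model of $T' = \Th(M^{\ext})$.

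Next I would apply Proposition \ref{prop: dp-min def amenable group dichotomy} to obtain the dichotomy in $\Th(M^{\ext})$: either $G$ has $\fsg$ over $M^{\ext}$, or $G$ admits a global $f$-generic type definable over $M^{\ext}$. In the first case, part \ref{thm: EllisFSG} of Theorem \ref{thm: Ellis group conjectures}, as explained in Section \ref{sec: Ellis for fsg}, gives that the Ellis group coincides with $G/G^{00}$. In the second case, part \ref{thm: EllisDefFGen}, which is proved through Proposition \ref{prop: definable f-generics}, gives the same conclusion. In either case, since by Corollary \ref{cor: G^00 is not changed in Shelah's expansion} the quotient $G/G^{00}$ is the same whether computed in $T$ or in $\Th(M^{\ext})$, we obtain the desired isomorphism with the Ellis group.

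The main potential obstacle is the preservation of dp-minimality in the Shelah expansion, but this is a standard consequence of the fact that every externally definable set is cut out by an honest definition and that dp-rank is a property which can be tested on indiscernible sequences. Beyond this verification, the proof is essentially a case split invoking results already established in the paper.
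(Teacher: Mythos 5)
Your proof is correct and takes essentially the same route as the paper: apply the dichotomy of Proposition \ref{prop: dp-min def amenable group dichotomy} over $M_0 = M^{\ext}$ and invoke parts \ref{thm: EllisFSG} and \ref{thm: EllisDefFGen} of Theorem \ref{thm: Ellis group conjectures}, the only point needing care being preservation of dp-minimality under Shelah expansion, which the paper likewise notes follows from quantifier elimination in $\Th(M^{\ext})$.
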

\begin{proof}
It is obvious from the definition of dp-minimality and the fact that $Th(M^{\ext})$ has quantifier elimination, that  $G$ remains dp-minimal  in $Th(M^{\ext})$, so we can apply Proposition \ref{prop: dp-min def amenable group dichotomy} together with the parts \ref{thm: EllisFSG} and \ref{thm: EllisDefFGen} of Theorem \ref{thm: Ellis group conjectures} which have already been proved.
\end{proof}

\begin{problem}
Is it true that every dp-minimal group is definably amenable? More specifically, is it true that every dp-minimal group is nilpotent-by-finite?
\end{problem}

\begin{rem}
We would expect the Corollary to be true of definably amenable groups of finite dp-rank, by for example finding a composition series of $G$ where the factors are $\fsg$ or have definable $f$-generics. We will see in our proof of part (4) of Theorem \ref{thm: Ellis group conjectures} that this strategy works in the $o$-minimal case.

On the other hand it is not the case that a definably amenable group of  finite dp-rank contains a definable subgroup (or definable quotient) which is dp-minimal. For example, take a real closed field $R$ and take $G$ to be the group of $G$ points of a simple abelian variety over $R$ of algebraic-geometric dimension $>1$. Then $G$ has finite dp-rank, has $o$-minimal dimension $> 1$ so could not be dp-minimal, and has no proper definable subgroups. 
\end{rem}

\subsection{The $o$-minimal case, proof of Theorem \ref{thm: Ellis group conjectures}\ref{thm: EllisOMin}}
\label{EllisEnds}
The remainder of the paper is devoted to proving part (4) of \ref{thm: Ellis group conjectures}, the $o$-minimal case.
So we let $T$ be an $o$-minimal expansion of a real closed field, $M\models T$ and $G$ a (definably connected) definably amenable group, defined over $M$. We make heavy use of the structure theorem for $G$ given in Section 2 of 
\cite{AnnalisaAnand}: there is a definable (over $M$) normal subgroup $H$ of $G$ such that 
\newline
(i) $H$ is definably connected (solvable) and ``torsion-free''
\newline
(ii) $G/H$ is definably compact, so $\fsg$ by  \cite{AnnalisaAnand}. We will denote $G/H$ by $T$ (hopefully without ambiguity) even though $T$ might be noncommutative. 

By Proposition 4.7 of \cite{AnnalisaAnand}, there is a global, left $H$-invariant type of $H$, definable over $M$. 

We now let $M_{0} = M^{\ext}$ and pass to $Th(M_{0})$. By Theorem \ref{thm: fsg and definable generics}, $G/H = T$ remains $\fsg$
and there is still a global $H$ invariant type of $H$, definable over $M_{0}$. In particular $H$ is definably 
extremely amenable, so $S_{H}(M_{0})$ has a fixed point under the action of $H(M_{0})$, hence the unique minimal 
definable $H(M_{0})$-flow is trivial, and {\em every} definable action of $H(M_{0})$ on a compact space has a fixed 
point. Now the surjective homomorphism $\pi:G\to T$ induces a surjective continuous function $\pi:S_{G}(M_{0})\to 
S_{T}(M_{0})$, which is clearly also a semigroup homomorphism. Let ${\mathcal M}(G)$ be some minimal subflow of 
$S_{G}(M_{0})$. So $\pi({\mathcal M}(G)) = {\mathcal M}(T)$, the unique minimal subflow of $S_{T}(M_{0})$ (which is the set of generic types by Section \ref{sec: Ellis for fsg}). The main point is:

\begin{lem} The restriction of $\pi$ to ${\mathcal M}(G)$ is a homeomorphism with ${\mathcal M}(T)$. 

\end{lem}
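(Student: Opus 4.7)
The map $\pi|_{\mathcal{M}(G)} : \mathcal{M}(G) \to \mathcal{M}(T)$ is continuous, and its image is a non-empty, closed, $G(M_0)$-invariant subset of $\mathcal{M}(T)$, hence equals $\mathcal{M}(T)$ by $G(M_0)$-minimality. Since $\mathcal{M}(G)$ is compact and $\mathcal{M}(T)$ Hausdorff, it suffices to prove injectivity. The plan is to use definable extreme amenability of $H$ and the universal property of $S_T(M_0)$ to construct a continuous $T(M_0)$-equivariant map $\rho: S_T(M_0) \to \mathcal{M}(G)$ whose restriction to $\mathcal{M}(T)$ will be the inverse of $\pi|_{\mathcal{M}(G)}$ up to a trivially invertible twist.

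The first step is to show $H(M_0)$ acts trivially on $\mathcal{M}(G)$. Since $H$ is definably extremely amenable in $\Th(M_0)$ by Theorem \ref{thm: lifting measures to Shelah's expansion}(2), and $\mathcal{M}(G)$ inherits a definable $H(M_0)$-flow structure, there is a fixed point $p^* \in \mathcal{M}(G)$. For arbitrary $p \in \mathcal{M}(G)$, $G(M_0)$-minimality gives a net $(g_i)$ in $G(M_0)$ with $g_i p^* \to p$; for $h \in H(M_0)$, left translation by $h$ is a self-homeomorphism of $S_G(M_0)$, so
\[
hp \;=\; \lim_i h g_i p^* \;=\; \lim_i g_i \cdot (g_i^{-1} h g_i) \cdot p^* \;=\; \lim_i g_i p^* \;=\; p,
\]
since $g_i^{-1} h g_i \in H(M_0)$ by normality of $H$ and hence fixes $p^*$.

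Next, fix an idempotent $u \in \mathcal{M}(G)$ and put $u' := \pi(u) \in \mathcal{M}(T)$, itself an idempotent of $S_T(M_0)$. The orbit map $G(M_0) \to \mathcal{M}(G)$, $g \mapsto gu$, is definable (all types over $M_0 = M^{\ext}$ being definable) and right-$H(M_0)$-invariant by the previous step, so it factors through a map $T(M_0) \to \mathcal{M}(G)$; any definable separator $D$ of preimages of disjoint closed sets may be replaced by $D \cdot H(M_0)$ to guarantee the descended map on $T(M_0)$ remains definable. Its image $T(M_0) \cdot u = G(M_0) \cdot u$ is dense by minimality, so $(\mathcal{M}(G), u)$ is a definable $T(M_0)$-ambit. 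The universal property of $(S_T(M_0), \tp(e/M_0))$ (the $T$-version of Fact \ref{fac: topological dynamics}(1)) yields a unique continuous $T(M_0)$-equivariant map $\rho: S_T(M_0) \to \mathcal{M}(G)$ with $\rho(\tp(e)) = u$. Both $\pi \circ \rho$ and the right-multiplication map $R_{u'}: r \mapsto r \cdot u'$ are continuous $T(M_0)$-equivariant maps $S_T(M_0) \to \mathcal{M}(T)$ (the latter landing in $\mathcal{M}(T)$ since it is a left ideal) sending $\tp(e) \mapsto u'$, so by uniqueness $\pi \circ \rho = R_{u'}$.

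To conclude, $R_{u'}|_{\mathcal{M}(T)}$ is a bijection by a general Ellis-semigroup argument: decomposing $\mathcal{M}(T) = \bigsqcup_v v \mathcal{M}(T)$ over the idempotents $v \in \mathcal{M}(T)$, we have $v(ru') = (vr)u' = ru'$ for $r \in v\mathcal{M}(T)$, so each piece is preserved, and on the group $v\mathcal{M}(T)$ (with identity $v$) the map $r \mapsto ru'$ coincides with right multiplication by $vu' \in v\mathcal{M}(T)$, which is bijective. Hence $\rho|_{\mathcal{M}(T)}$ is injective; its image is closed, non-empty, and $T(M_0)$-invariant, so equals $\mathcal{M}(G)$ by $T(M_0)$-minimality (inherited from $G(M_0)$-minimality via the trivial $H$-action). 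Thus $\rho|_{\mathcal{M}(T)}$ is a continuous bijection between compact Hausdorff spaces, hence a homeomorphism, and the identity $\pi|_{\mathcal{M}(G)} \circ \rho|_{\mathcal{M}(T)} = R_{u'}|_{\mathcal{M}(T)}$ exhibits $\pi|_{\mathcal{M}(G)} = R_{u'}|_{\mathcal{M}(T)} \circ (\rho|_{\mathcal{M}(T)})^{-1}$ as a composition of homeomorphisms. The main technical hurdle I anticipate is verifying the definability claims cleanly after descending to the imaginary sort $T = G/H$ and checking that the formalism of Section \ref{sec: general top dyn} applies uniformly to $T(M_0)$-ambits.
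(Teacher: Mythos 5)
Your proposal is correct and shares the paper's preliminary observations (a fixed point for the $H(M_0)$-action on $\mathcal{M}(G)$, descent of the flow structure to $T(M_0)$), but differs genuinely in the end-game. The paper shows $\mathcal{M}(G)$ is a minimal definable $T(M_0)$-flow and then invokes the universal property of $\mathcal{M}(T)$ as \emph{the} universal minimal definable $T(M_0)$-flow, together with a cited rigidity fact from \cite{GisPenPil} to deduce that the composed flow map is an automorphism. You instead invoke the universal property of the \emph{ambit} $S_T(M_0)$ to build $\rho$, identify $\pi\circ\rho$ with $R_{u'}$ by uniqueness, and prove $R_{u'}|_{\mathcal{M}(T)}$ is a bijection directly via the decomposition $\mathcal{M}(T)=\bigsqcup_v v\mathcal{M}(T)$ over idempotents. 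Your version is more self-contained (no appeal to the rigidity black-box), at the cost of slightly more semigroup bookkeeping; your triviality argument for the $H(M_0)$-action (nets plus normality of $H$) is also cleaner than the paper's, which passes through the heir $\bar p$ over the monster and uses its definability. Two minor slips worth fixing: in the surjectivity step you assert the image of $\pi|_{\mathcal{M}(G)}$ is a subset of $\mathcal{M}(T)$ and then apply minimality, but the correct order is to observe that the continuous image of a minimal flow is minimal, hence equals $\mathcal{M}(T)$ by its uniqueness ($T$ being $\fsg$); and the definability of the descended $T(M_0)$-flow must be verified for the orbit map of \emph{every} $x\in\mathcal{M}(G)$, not only $x=u$ --- your $D\cdot H(M_0)$ trick does work uniformly, but this should be said.
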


\begin{proof} This is a rather general topological dynamics fact (under the hypotheses), surely with a reference somewhere, but we give a proof nevertheless. 

The action of $G(M_{0})$ on ${\mathcal M}(G)$, induces an action (definable) of $H(M_{0})$ on ${\mathcal M}(G)$ which as remarked above (definable extreme amenability of $H(M_{0})$) has a fixed point, which we call $p$. So note that ${\mathcal M}(G) = \overline{G(M_{0})p} = \{q\cdot p: q\in S_{G}(M_{0})\}$. 
As $H(M_{0})$ fixes $p$ it follows by definability of $p$ that $H(M')$ fixes ${\bar p}$ where $M'$ is a saturated model extending $M$, and ${\bar p}$ the unique heir of $p$ over $M'$.  So if $g\in G(M')$ then $g{\bar p}$ depends only on the coset $gH$. So for  $c\in T(M')$, $c{\bar p}$ is well-defined (as $g\bar p$ for some/any $g\in G(M')$ such that $gH = c$). Hence we can define $q\cdot p$ for $q\in S_T(M_{0})$ as $c{\bar p}|M_{0}$, namely $\tp(ca/M_{0})$ where $c\in T(M')$ realizes $q$ and $a$ realizes ${\bar p}$, and we can easily check that
\newline
(i) ${\mathcal M}(G) = \{q\cdot p: q\in S_{T}(M_{0})\}$,
\newline
(ii) $T(M_{0})$ acts on ${\mathcal M}(G)$, by $c(q\cdot p) = cq\cdot p$.
\newline
(iii) Under this action  ${\mathcal M}(G)$ is a minimal definable $T(M_{0})$-flow. 
\newline
(iv)  $\pi|{\mathcal M}(G)$ is a map of $T(M_{0})$-flows.

\vspace{2mm}

It follows from (iii) and (iv) that $\pi|{\mathcal M}(G)$ is a homeomorphism with ${\mathcal M}(T)$, as ${\mathcal M}(T)$ is the universal minimal definable $T(M_{0})$-flow. (By universality we also have a $T(M_{0})$-flow map, $f:{\mathcal M}(T) \to {\mathcal M}(G)$. The composition of $f$ with $\pi$ has to be an automorphism of ${\mathcal M}(T)$, using  Claim (iv) preceding Proposition 3.12 of \cite{GisPenPil}). So the lemma is proved. 
\end{proof}

By the Lemma and the fact that $\pi$ is a semigroup homomorphism, $\pi$ induces an isomorphism between Ellis groups $u{\mathcal M}(G)$ and $\pi(u){\mathcal M}(T)$.  The canonical map $\pi(u){\mathcal M}(T)  \to  T/T^{00}$ is an isomorphism by part \ref{thm: EllisFSG} of Theorem \ref{thm: Ellis group conjectures}. On the other hand as $H = H^{00}$ (by Proposition \ref{prop: def ext am iff}), it follows that $H < G^{00}$ and hence the map $G\to T$ induces an isomorphism of $G/G^{00}$ with $T/T^{00}$. So clearly the canonical homomorphism from $u{\mathcal M}(G)$ to $G/G^{00}$ is an isomorphism, as required. 

\bibliographystyle{alpha}
\bibliography{common}

\def\cprime{$'$}
\begin{thebibliography}{GPP12b}

\bibitem[Aus88]{Auslander}
Joseph Auslander.
\newblock {\em Minimal flows and their extensions}, volume 153 of {\em
  North-Holland Mathematics Studies}.
\newblock North-Holland Publishing Co., Amsterdam, 1988.
\newblock Notas de Matem{\'a}tica [Mathematical Notes], 122.

\bibitem[CK12]{CheKap}
Artem Chernikov and Itay Kaplan.
\newblock Forking and dividing in ${NTP}_2$ theories.
\newblock {\em J. Symbolic Logic}, 77(1):1--20, 2012.

\bibitem[CP12]{AnnalisaAnand}
Annalisa Conversano and Anand Pillay.
\newblock Connected components of definable groups and {$o$}-minimality {I}.
\newblock {\em Adv. Math.}, 231(2):605--623, 2012.

\bibitem[CS]{2012arXiv1202.2650C}
Artem {Chernikov} and Pierre {Simon}.
\newblock {Externally definable sets and dependent pairs II}.
\newblock {\em Transactions of the American Mathematical Society, accepted
  (arXiv:1202.2650)}.

\bibitem[CS12]{ExtDefI}
Artem Chernikov and Pierre Simon.
\newblock Externally definable sets and dependent pairs.
\newblock {\em Israel Journal of Mathematics}, pages 1--17, 2012.

\bibitem[Gis11]{Jakub}
Jakub Gismatullin.
\newblock Model theoretic connected components of groups.
\newblock {\em Israel J. Math.}, 184:251--274, 2011.

\bibitem[GPP12a]{GisPenPilSL2R}
J.~Gismatullin, D.~Penazzi, and A.~Pillay.
\newblock Some model theory of {$SL(2,R)$}.
\newblock {\em Preprint, arXiv:1208.0196}, 2012.

\bibitem[GPP12b]{GisPenPil}
Jakub Gismatullin, Davide Penazzi, and Anand Pillay.
\newblock On compactifications and the topological dynamics of definable
  groups.
\newblock {\em Preprint, arXiv:1212.3176}, 2012.

\bibitem[HP11]{NIP2}
Ehud Hrushovski and Anand Pillay.
\newblock On {NIP} and invariant measures.
\newblock {\em J. Eur. Math. Soc. (JEMS)}, 13(4):1005--1061, 2011.

\bibitem[HPP08]{NIP1}
Ehud Hrushovski, Ya'acov Peterzil, and Anand Pillay.
\newblock Groups, measures, and the {NIP}.
\newblock {\em J. Amer. Math. Soc.}, 21(2):563--596, 2008.

\bibitem[HPS13]{HruPilSimMeas}
Ehud Hrushovski, Anand Pillay, and Pierre Simon.
\newblock Generically stable and smooth measures in {NIP} theories.
\newblock {\em Trans. Amer. Math. Soc.}, 365(5):2341--2366, 2013.

\bibitem[Hru]{HrushMetastable}
Ehud Hrushovski.
\newblock Valued fields and metastable groups.
\newblock {\em Preprint}.

\bibitem[New09]{New4}
Ludomir Newelski.
\newblock Topological dynamics of definable group actions.
\newblock {\em J. Symbolic Logic}, 74(1):50--72, 2009.

\bibitem[Pil13]{AnandTopDyn}
Anand Pillay.
\newblock Topological dynamics and definable groups.
\newblock {\em J. Symbolic Logic}, 78:657--666, 2013.

\bibitem[Poi01]{PoizatStableGroups}
Bruno Poizat.
\newblock {\em Stable groups}, volume~87 of {\em Mathematical Surveys and
  Monographs}.
\newblock American Mathematical Society, Providence, RI, 2001.
\newblock Translated from the 1987 French original by Moses Gabriel Klein.

\bibitem[{She}07]{Sh886}
Saharon {Shelah}.
\newblock {Definable groups for dependent and 2-dependent theories}.
\newblock {\em Preprint, arXiv:math/0703045}, March 2007.

\bibitem[She08]{MR2361885}
Saharon Shelah.
\newblock Minimal bounded index subgroup for dependent theories.
\newblock {\em Proc. Amer. Math. Soc.}, 136(3):1087--1091 (electronic), 2008.

\bibitem[She09]{ShelahDependentCont}
Saharon Shelah.
\newblock Dependent first order theories, continued.
\newblock {\em Israel J. Math.}, 173:1--60, 2009.

\bibitem[Sim12]{SimonDPmin}
Pierre Simon.
\newblock Invariant types in dp-minimal theories.
\newblock {\em Preprint, arXiv:1210.4479}, 2012.

\bibitem[Sim13]{SimContraction}
Pierre Simon.
\newblock More on invariant types in {NIP} theories.
\newblock {\em Note, available on authors webpage}, 2013.

\end{thebibliography}

\end{document}